\theoremstyle{plain}  
	\newtheorem{theorem}{Theorem}
		\newtheorem{proposition}[theorem]{Proposition}
		\newtheorem{lemma}[theorem]{Lemma}
		\newtheorem{corollary}[theorem]{Corollary}
		\newtheorem{claim}{Claim}
		\newtheorem{assumption}[theorem]{Assumption}
\theoremstyle{definition}
\theoremstyle{definition}
				\newenvironment{example*}{\exampleEnv}{\endexampleEnv}
				\newenvironment{remark}  
						{\pushQED{\qed}\remarkEnv}
						{\popQED\endremarkEnv}
				\newenvironment{remark*}{\remarkEnv}{\endremarkEnv}
\titleformat*{\section}{\Large\centering\normalfont\bfseries}
\titleformat{\subsection}[runin]
{\normalfont\large\bfseries}{\thesubsection}{1em}{}
\titleformat{\subsubsection}[runin]
{\normalfont\bfseries}{\thesubsubsection}{1em}{}
\def\R{\mathbb{R}} 
\def\Sd{{\mathbb S}^{d-1}}
\newcommand{\tr}[1]{ \mathop\mathrm{tr} \left(#1\right)}
\newcommand{\KL}{{\rm KL}} 
\newcommand{\rs}{r(\Sigma)}
\newcommand{\Na}{\mathbb{N}}
\newcommand{\N}{\mathcal{N}}
\newcommand{\Gv}{\Gamma_{v,\gamma}}
\renewcommand{\Pr}[1]{\mathbb{P}\left(#1\right)} 
\newcommand{\Ex}[1]{\mathbb{E}\left[#1\right]} 
\newcommand{\Ind}[1]{{\bf 1}_{#1}}
\newcommand{\Var}[1]{{\rm Var}\left(#1\right)} 
\newcommand\norm[1]{\left\lVert #1\right\rVert}
\newcommand\abs[1]{\left| #1\right|}
\newcommand\normop[1]{\left\lVert #1\right\rVert_{\rm op} }
\newcommand\inq[2]{\langle #1, #2 \rangle^4}
\newcommand\insq[2]{\langle #1, #2 \rangle^2}
\newcommand\inner[2]{\langle #1, #2 \rangle}
\newcommand\ip[2]{\langle #1, #2 \rangle}
\newcommand{\Est}{\widehat{{\sf E}}}
\newcommand{\Trest}{\widehat{{\sf T}}}
\newcommand{\est}{\widehat{{\sf e}}}
\numberwithin{theorem}{section}
\numberwithin{equation}{section}
\definecolor{Quase}{RGB}{144,26,30}
\begin{document}
\title{Improved covariance estimation: optimal robustness and sub-Gaussian guarantees under heavy tails}
\author{Roberto I. Oliveira\thanks{IMPA, Rio de Janeiro, Brazil. \texttt{rimfo@impa.br}. Research supported by ``Bolsa de Produtividade em Pesquisa'' and a ``Projeto Universal'' (432310/2018-5) from CNPq, Brazil; and by a ``Cientista do Nosso Estado'' grant (E26/200.485/2023) from FAPERJ, Rio de Janeiro, Brazil.}~ and Zoraida F. Rico\thanks{Columbia University, New York. \texttt{zoraida.f.rico@columbia.edu.}}}
\date{}
\maketitle
\begin{abstract} We present an estimator of the covariance matrix $\Sigma$ of random $d$-dimensional vector from an i.i.d. sample of size $n$. Our sole assumption is that this vector satisfies a bounded $L^p-L^2$ moment assumption over its one-dimensional marginals, for some $p\geq 4$. Given this, we show that $\Sigma$ can be estimated from the sample with the same high-probability error rates that the sample covariance matrix achieves in the case of Gaussian data. This holds even though we allow for very general distributions that may not have moments of order $>p$. Moreover, our estimator can be made to be optimally robust to adversarial contamination. This result improves the recent contributions by Mendelson and Zhivotovskiy and Catoni and Giulini, and matches parallel work by Abdalla and Zhivotovskiy (the exact relationship with this last work is described in the paper).\end{abstract}

\section{Introduction}\label{sec:introCov}
The covariance matrix estimation is a classical problem in Multivariate Statistics. In this paper, we are interested in estimating the covariance matrix $\Sigma = \Ex{XX^\top} $ of a random vector $X \in \R^d$ with
zero mean from independent and identically distributed (i.i.d.) copies $X_1, \dots, X_n$ of $X$. We study this problem under (relatively) heavy tails, from a nonasymptotic perspective. This less classical setting has received much recent attention \cite{mendelson2014singular,tikhomirov2018sample,minsker2018sub}.
\\
A natural way to address the problem is using the sample covariance matrix, defined as \[\widehat{\Sigma}_n=\frac{1}{n}\sum_{i=1}^n(X_i)(X_i)^\top.\] This research has yielded important results in obtaining concentration inequalities of the sample covariance's deviation from the true covariance in the operator norm. This approach covers both classical asymptotic work and more recent nonasymptotic bounds, as discussed in \cite{mendelson2014singular,tikhomirov2018sample,vershynin2011introduction}. The dependence of the dimension in the problem is a central focus in this research. Remarkably, other papers such as \cite{lounici, kolt} have achieved dimension-free results. In \cite{lounici}, the author derives bounds on the operator norm in terms of the “effective rank of the covariance matrix” denoted by \[\rs=\frac{\tr{\Sigma}}{\normop{\Sigma}}.\] Meanwhile, in \cite{kolt}, the authors employ a chaining-based approach to derive concentration inequalities and expectation bounds for the operator norm of centered Gaussian random variables in a separable Banach space. Notably, these bounds are independent of the dimension of the space. Furthermore, a recent preprint by Zhivotovskiy \cite{zhivotovskiy2021dimension} presents related dimension-free bounds, assuming light-tail conditions on the vector $X$. 
\\
Another direction of analysis is designing a covariance matrix estimator with tight operator norm deviation bounds, while making minimal assumptions about the distribution of $X$. Results in this direction include  \cite{minsker2018sub,minskrobust,ostrovskii2019affine}. The current state-of-the-art regarding probability bounds is in \cite{SharZhi} and \cite{catoni2017}. Mendelson and Zhivotovskiy demonstrate that there exists a covariance estimator that achieves, up to logarithmic factors, a rate of error that matches the error of the sample covariance in the Gaussian case. Remarkably, this holds under only the assumption of bounded kurtosis on the one dimensional marginals of the data distribution. According to \cite{abdalla2022}, Catoni and Giulini \cite{catoni2017} find better bounds without the log factors in the same setting. However, it is quite hard to extract this result from their very sophisticated manuscript, and it seems that optimal results can only be obtained with some knowledge of distributional parameters. We will have more to say about these papers in \S \ref{sub:recent}.

This paper presents a theoretically simpler approach to covariance estimation with heavy tails and contamination. Our estimator is based on a high-dimensional ``trimmed mean'' idea that is optimally robust against adversarial corruption of the data. Even in the non-contaminated case, it removes the spurious $\log \rs$ factors in \cite{SharZhi}, and matches the results of \cite{catoni2017} under fourth-moment conditions, without requiring any knowledge of distribution parameters.

\begin{remark}[Parallel work by Abdalla and Zhivotovskiy]\label{rem:parallel} The original version of this paper comes from the PhD thesis of the second-named author \cite[Chapter 3]{rico2022}, which was defended on April 29th, 2022. It was somewhat weaker than the present result, in that it only worked under a fourth-moment assumption on the data. As the final version of the thesis was being submitted, Abdalla and Zhivotovskiy \cite{abdalla2022} posted a preprint with results under $p$-th moment assumptions, for arbitrary $p\geq 4$. The works of our two groups were completely independent up to that point. After \cite{abdalla2022} was posted, those authors encouraged us to revise our proof in light of our present results, especially because their own proof for $p>4$ was quite convoluted. In revising our arguments, we found a significantly streamlined version of the proof in \cite{rico2022} that works for all $p\geq 4$. We nevertheless claim that the core of the argument in \cite{rico2022} is preserved. The similarities and differences between \cite{abdalla2022} and the present paper will be discussed at several points of this introduction. We wish to thank Abdalla and Zhivotovskiy right away for their stimulating discussions on this topic.\end{remark} 

\subsection{The main assumption} To state our main result, we first present the assumption we will make throughout the paper. Undefined notation may be found in Section \ref{sec:prelim}.

\begin{assumption}[i.i.d. sample with contamination]\label{assum1}$X_1,\dots,X_n$ are i.i.d. copies of a random element $X$ of $\R^d$ satisfying $\Ex{\|X\|^p}<+\infty$ for some $p\geq 4$. We assume $\Ex{X}=0$, that the covariance $\Sigma$ of $X$ is non-null, and set 
\[\kappa_p:=\sup\limits_{v\in\R^d,\,\inner{v}{\Sigma v}=1}\Ex{|\inner{X}{v}|^p}^{\frac{1}{p}}.\]
Finally, we let $Y_1,\dots,Y_n$ denote other random elements of $\R^d$ that satisfy the following condition:
\[\#\{i\in [n]\,:Y_i\neq X_i\}\leq \eta n\]
for some $\eta\in [0,1)$.\end{assumption}

Notice the Assumption only requires finitely many moments of the vector $X$. Therefore, it allows for relatively heavy tails. The sole requirement is that the one-dimensional marginals satisfy a moment condition, $\kappa_p^2$ is referred to as the $L^p-L^2$ hypercontractivity constant by \cite{abdalla2022} and elsewhere in the literature. We highlight a few properties of this moment assumption.

\begin{enumerate}
\item If the vector $X=(X(1),X(2),\dots,X(d))^\top$ has independent coordinates, then $\kappa_p$ can be bounded by $C_p\,\max_{1\leq i\leq d}\|X(i)\|_{L^p}/\|X(i)\|_{L^2}$, where $C_p>0$ depends on $p$ only. 
\item The same bound holds if $X$ has a distribution that is {\em unconditional}, that is, if the law of $X$ is invariant by arbitrary sign changes of its coordinates. 
\item If $X$ satisfies $\kappa_p<+\infty$, the same holds for any linear transformations of $X$ (with the same $\kappa_p$). 
\item Consider the scenario where $X$ follows an {\em elliptical distribution} with zero mean. In such case, there exist a unconditional random vector $\tilde{X}$ (in the sense of item 2) and a linear transformation $T$ such that $X=T\tilde{X}$. Then, the constant $\kappa_p$ for $X$ is the same as that of $\tilde{X}$.
\end{enumerate}

Assumption \ref{assum1} also includes the possibility of {\em sample contamination}. That is, all estimators we consider will be computed on a random sample, denoted as $Y_1, \ldots, Y_n$, which may differ from the original sample $X_1, \ldots, X_n$ by at most $\eta n$ indices. This concept aligns with the ``$\epsilon$-replacement''~notion as defined in \cite{donoho1983notion} and further popularized as the ``adversarial corruption''~model, extensively studied in recent literature \cite{diakonikolas2019recent, diakorobust, lugosi2021}. We aim to develop estimators that can withstand this type of contamination, which is is a much more demanding model than Huber contamination \cite{huber1981}. 

Lastly, we note that the requirement $\Ex{X}=0$ is introduced solely for convenience. In practice (assuming $n$ is even for simplicity), when $\Ex{X}\neq 0$, one can consider the vectors $\bar{X}_i:=(X_{2i-1}-X_{2i})/\sqrt{2}$ and $\bar{Y}_i:=(Y_{2i-1}-Y_{2i})/\sqrt{2}$ for $i=1,\dots,n/2$. These vectors now satisfy Assumption \ref{assum1} with a new sample size $n/2$, a new moment constant $\bar{\kappa}_p = \sqrt{2}\kappa_p$ and a new contamination level $\bar{\eta}=2\eta$.

\subsection{The main result} The next theorem is the key contribution of the present paper. In what follows, $\R^{d\times d}_{\geq 0}$ is the set of $d\times d$ symmetric positive semidefinite matrices, and $\normop{\cdot}$ is the operator norm.

\begin{theorem}[Main result; proof in \S \ref{sub:finalfinal}]\label{thm:main} There exists a constant $C>0$ such that the following holds. Fix a confidence parameter $1-\alpha\in (0,1)$, a sample size $n\in\Na$ and a contamination parameter $\eta\in [0,1/2)$. Then, there is an estimator (i.e. a measurable function) $\Est_{\star}:(\R^{d})^n \to \R^{d\times d}_{\geq 0}$, depending on $\alpha$, $\eta$, and $n$, such that, whenever Assumption \ref{assum1} is satisfied, and additionally $\eta\leq 1/C\kappa_4^4$ and $n\geq C\,(\rs + \log(2/\alpha)),$ the following holds with probability $\geq 1-\alpha$:
\[\normop{\Est_{\star}(Y_1,\dots,Y_n) - \Sigma}\leq C\kappa^2_4\normop{\Sigma}\left( \sqrt{\frac{\rs}{n}} + \sqrt{\frac{\log(2/\alpha)}{n}}\right) + C\kappa_p^2\normop{\Sigma}\eta^{1-\frac{2}{p}}.\]\end{theorem}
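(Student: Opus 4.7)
The plan is to reduce operator-norm estimation of $\Sigma$ to the simultaneous estimation of the quadratic forms $q(v) := \ip{v}{\Sigma v} = \Ex{\ip{X}{v}^2}$ for all $v \in \Sd$, using the identity $\normop{M - \Sigma} = \sup_{v \in \Sd} |\ip{v}{Mv} - q(v)|$ valid for any symmetric matrix $M$. For each $v$, the natural scalar estimator $\widehat{T}(v)$ is the $\beta$-trimmed mean of $\ip{Y_1}{v}^2, \dots, \ip{Y_n}{v}^2$ (sort in increasing order and average the middle $(1-2\beta)n$ values), with $\beta$ of order $\eta + (\rs + \log(2/\alpha))/n$. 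The final estimator $\Est_{\star}$ is then any PSD matrix (existence via a measurable selection on a compact set) approximately minimizing $\sup_v |\ip{v}{\Est_{\star} v} - \widehat{T}(v)|$; a triangle inequality yields $\normop{\Est_{\star} - \Sigma} \leq 2\sup_v |\widehat{T}(v) - q(v)|$, reducing the problem to uniform control of this supremum.

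The main technical device is a deterministic sandwich: on a favourable event, $\widehat{T}(v)$ lies between two truncated empirical means of the form $n^{-1}\sum_i \min(\ip{Y_i}{v}^2, \tau_{\pm}(v))$, for data-dependent thresholds $\tau_{\pm}(v) \asymp \kappa_p^2 \normop{\Sigma}/\eta^{2/p}$ chosen to balance the contamination term. This reformulation is useful because truncated variables have second moment $\lesssim \kappa_4^4\normop{\Sigma}^2$ and range $\tau$, so for a fixed $v$ a Bernstein-type inequality gives a sub-Gaussian rate of order $\kappa_4^2\normop{\Sigma}\,(1/\sqrt{n} + \sqrt{\log(2/\alpha)/n})$, plus an $O(\kappa_p^2\normop{\Sigma}\,\eta^{1-2/p})$ bias coming from the gap between $\Ex{\ip{X}{v}^2}$ and its truncated version (bounded via Markov and the $p$-th moment). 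The choice $\beta > \eta$ ensures that the trimming effectively discards all contaminated indices, so that $Y_i$ and $X_i$ are interchangeable inside the truncated sums.

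The main obstacle is upgrading pointwise concentration to a uniform bound over $\Sd$ while preserving the dimension-free rate $\sqrt{\rs/n}$: a naive $\varepsilon$-net on $\Sd$ would lose a factor of $\sqrt{d}$. I would attack this via a PAC-Bayesian / Gaussian-smoothing argument with a Gaussian prior on $v$ whose covariance is proportional to $\Sigma$, so that the effective dimension becomes $\rs = \tr{\Sigma}/\normop{\Sigma}$ via the identity $\Ex{\ip{g}{\Sigma g}} = \tr{\Sigma}$. Smoothing introduces an approximation error governed by the Lipschitz behaviour in $v$ of the truncated quadratic forms $v\mapsto \min(\ip{Y_i}{v}^2, \tau)$, which is of order $\sqrt{\tau\,\normop{\Sigma}}$; the sample-size hypothesis $n \geq C(\rs + \log(2/\alpha))$ is precisely what is needed to absorb this smoothing error into the sub-Gaussian main term and to ensure $\beta < 1/2$. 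The assumption $\eta \leq 1/C\kappa_4^4$ keeps the two error terms of the theorem well-separated and the truncation regime non-degenerate; combining these ingredients yields the stated bound.
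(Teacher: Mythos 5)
Your high-level architecture (trimmed means of $\ip{Y_i}{v}^2$ over $v\in\Sd$, reduction to a truncated empirical process, PAC-Bayesian Gaussian smoothing to get a dimension-free uniform bound) is the same as the paper's, but two steps in your sketch do not go through as described. First, the smoothing error. You propose to control the gap between the smoothed and unsmoothed truncated processes by the Lipschitz constant of $v\mapsto \ip{Y_i}{v}^2\wedge\tau$, giving an error of order $\sqrt{\tau\normop{\Sigma}}$ to be ``absorbed'' using $n\geq C(\rs+\log(2/\alpha))$. This is quantitatively hopeless: with the truncation level needed for the counting/contamination argument (of order $\kappa_p^2\normop{\Sigma}(n/t)^{2/p}$, or your $\kappa_p^2\normop{\Sigma}\eta^{-2/p}$), the quantity $\sqrt{\tau\normop{\Sigma}}$ exceeds the entire error budget $\kappa_4^2\normop{\Sigma}\sqrt{\rs/n}+\kappa_p^2\normop{\Sigma}\eta^{1-2/p}$, and it does not shrink with $n$. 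The paper does not treat smoothing as a first-order perturbation: it shows (Lemmas \ref{lem:comparisonsmoothed} and \ref{lemma:NormBounds}) that $\Gv(\ip{x}{\theta}^2\wedge B')$ equals $\ip{x}{v}^2+\gamma^2\|x\|^2$ up to an exponentially small remainder, so the order-one shift $\gamma^2\|X_i\|^2$ (whose average is $\approx \tr{\Sigma}/t$, i.e.\ comparable to $\normop{\Sigma}$ when $t\asymp\rs$) \emph{cancels} between the empirical average and its expectation, with the fluctuation controlled by a separate scalar Bernstein bound (Lemma \ref{lem:simplebernstein}) and a counting event for the norms $\|X_i\|$ (the event ${\rm Norm}(t)$). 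Without this cancellation your bound cannot reach the stated rate. Relatedly, your proposal to take the smoothing prior with covariance proportional to $\Sigma$ is problematic: the KL cost then involves $\ip{v}{\Sigma^{-1}v}$, which is not controlled by $\rs$ (and may be infinite); the identity $\Ex{\ip{g}{\Sigma g}}=\tr{\Sigma}$ you invoke is the one for \emph{isotropic} $g$, and indeed the paper uses isotropic smoothing $\Gamma_{v,\gamma}$ with $\gamma^2\asymp 1/t$, the effective rank entering only through fourth-moment computations such as $\Gamma_{0,1}\ip{\theta}{\Sigma\theta}^2\leq 3\tr{\Sigma}^2$.

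Second, your trimming level $\beta\asymp \eta+(\rs+\log(2/\alpha))/n$ depends on $\rs=\tr{\Sigma}/\normop{\Sigma}$, which is a functional of the unknown $\Sigma$; the theorem requires an estimator depending only on $\alpha$, $\eta$ and $n$. This adaptivity issue is not cosmetic: trimming too little breaks the counting argument, trimming a fixed fraction ruins the $\sqrt{\rs/n}$ rate. The paper resolves it by proving the error bound \emph{simultaneously} for all trimming levels $k$ in a range $[k_0,n)$ (Lemma \ref{lem:manyktruncate}, via a union bound with geometrically decreasing confidence levels) and then selecting $\widehat{k}$ from the data: $\tr{\Sigma}$ is estimated by a trimmed mean of $\|Y_i\|^2$ (Corollary \ref{cor:trace}) and $\normop{\Sigma}$ by a crude $\Est_{k^*}$ with $k^*=\lfloor n/D\rfloor$, giving a constant-factor estimate of $\rs$ and hence a valid $\widehat{k}$. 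Your sketch needs an analogous mechanism (or an explicit argument that the bound holds uniformly over the trimming level together with a data-driven selection rule); as written, the estimator is not well defined from the observations alone. The remaining differences (two-sided versus one-sided trimming of the nonnegative variables $\ip{Y_i}{v}^2$, and tuning the truncation only to the contamination term rather than to the uniform counting event ${\rm Count}(B,t)$ with $t\asymp \eta n+\rs+\log(2/\alpha)$, which is what drives the bound also when $\eta=0$) are minor by comparison, but should be fixed in the same pass.
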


Ignoring the $\kappa_4$ factor, the error bound we obtain in estimating $\Sigma$ consists of three terms.

\begin{enumerate}
\item One term of order $\normop{\Sigma}\sqrt{\rs/n}$, which cannot be avoided in a minimax sense (see \cite[Section 5]{abdalla2022} and \cite[Theorem 2]{lounici});
\item The second term $\normop{\Sigma}\sqrt{\log(2/\alpha)/n}$ is also unavoidable, as shown in \cite[Section 5]{abdalla2022}.
\item Finally, the term proportional to $\kappa_p^2\normop{\Sigma}\eta^{1-2/p}$ is minimax-optimal for $p=4$, and also in the sub-Gaussian case where $\kappa_p=O(\sqrt{p})$ for large $p$; see again \cite[Section 5]{abdalla2022} for details.
\end{enumerate}

As mentioned in Remark \ref{rem:parallel}, our first version of Theorem \ref{thm:main}, dating from April 2022, only considered the case $p=4$, and had a more complicated proof. The present form and proof of Theorem \ref{thm:main} emerged after an interaction with the authors of \cite{abdalla2022}. As we explain below, the proof strategy in the present paper is a streamlining of our original argument. We will also see in \S \ref{sub:recent} that certain special cases of Theorem \ref{thm:main} could have been deduced from earlier work \cite{catoni2017,zhivotovskiy2021dimension}.

In the context of relaxing weak moment assumptions, \S \ref{appendixD} in the Appendix introduces a variant of our main result, Theorem \ref{thm:main}. This variant eases the moment conditions related to $L^q-L^2$ norm equivalence (hypercontractivity) with $2\leq q\leq 4$ and delivers bounds of the following form: \[\kappa_q^2\normop{\Sigma}\left(\frac{\rs + \log(2/\alpha)}{n}\right)^{1-\frac{2}{q}}+\kappa_q^2\normop{\Sigma}\eta^{1-\frac{2}{q}}.\]

\subsection{Some details on recent papers on covariance estimation}\label{sub:recent} Before we outline our approach to Theorem \ref{thm:main}, we briefly describe the relevant recent work on the same topic. One point to mention is that neither this work, nor the papers we consider obtain computationally efficient estimators. The focus is on information-theoretically optimal results. 

The best published result on finite-sample covariance estimation is the one by Mendelson and Zhivotovskiy \cite{SharZhi}. In the setting of Theorem \ref{thm:main}, their estimator $\widehat{\Sigma}$ works for $\eta=0$ (no contamination), requires a sample size $n\geq C\,(\rs \log \rs + \log(2/\alpha))$, and achieves the following bound with probability $\geq 1-\alpha$:
\[\normop{\widehat{\Sigma} - \Sigma}\leq C\kappa_4^2\normop{\Sigma}\left( \sqrt{\frac{\rs\log\rs}{n}} + \sqrt{\frac{\log(1/\alpha)}{n}}\right).\]
The presence of the additional $\log \rs$ factors come from their reliance on matrix concentration inequalities by Minsker \cite{minsker2017bernstein} and  Tropp \cite{tropp2015matrixconc}, which unavoidably feature such $\log\rs$ factors. More recent matrix concentration results by Brailovskaya and van Handel \cite[Section 8]{brailovskaya2022universality} contain dimension-dependent factors in their error terms. All in all, it seems that no proof strategy based on off-the-shelf matrix concentration will give us an analogue of Theorem \ref{thm:main}, even with $\eta=0$. 

As mentioned above, the result obtained by Catoni and Giulini \cite{catoni2017} can be employed to recover the bound of Theorem \ref{thm:main} when $\eta=0$. Their estimator is based on a very general PAC-Bayesian framework which relies on certain ``influence functions.''  It seems to us that their approach requires previous knowledge of certain problem parameters, such as what we refer to as $\kappa_4$, to achieve optimal guarantees. Further details can be found in the discussion preceding Proposition 4.2 in \cite{catoni2017}. 

Zhivotovskiy \cite[Lemma 5]{zhivotovskiy2021dimension} also (implicitly) gives a version of Theorem \ref{thm:main} in the contamination-free setting, at least when $\kappa_4<+\infty$ is known. This result was sharpened by Abdalla and Zhivotovskiy \cite{abdalla2022}, whose parallel work we have been discussing since Remark \ref{rem:parallel}. The approach in \cite{abdalla2022} combines different ideas. One of them comes from the literature of sample covariance: a line of works culminating in \cite{tikhomirov2018sample} have identified that the sum 
\[\frac{1}{n}\sum_{i=1}^n\inner{X_i}{v}^2\]
contains a small number of ``peaky''~terms (with relatively large magnitude) and a large number of ``spread'' ~terms. Their initial contribution involves obtaining a dimension-free formulation of \cite{tikhomirov2018sample}. They then obtain their version of Theorem \ref{thm:main} by using an influence-function (or ``soft truncation'') approach combined with PAC-Bayesian techniques, in the same vein as \cite{zhivotovskiy2021dimension}. This leads to significant technical difficulties, which we manage to bypass with our analysis. One of our simplifications is to notice that PAC-Bayesian arguments give us direct control over some uniform counting events. This avoids using the arguments in \cite{abdalla2022} that involve Bai-Yin-type bounds. See Remark \ref{rem:othercountingresults} for more details on this point.

Finally, Mendelson \cite{mendelson2021lp} considered the more general problem of estimating the $L^p$ moments of one dimensional marginals of a random vector $X$. For this purpose, \cite{mendelson2021lp} develops an approach via VC dimension arguments. In the case of covariance estimation ($p=2$), this leads to dimension-dependent bounds. For example, in the contamination-free setting, and under a $L^4-L^2$ assumption, \cite{mendelson2021lp} obtains error bounds of the order of $\sqrt{d\log(en/d)/n}$, with probability $1-e^{-cd}$ (for some $c>0$). However, the estimator in \cite{mendelson2021lp} is essentially the same as ours, which we now describe. 

\subsection{Our approach via sample trimming}\label{sub:approach} The proof of Theorem \ref{thm:main} can be outlined as follows. 

\begin{enumerate}
    \item Focus on estimating $\inner{v}{\Sigma v}$ uniformly over vectors $v$ in the unit sphere $\Sd$, which is equivalent to estimating $\Sigma$. \item Consider the following {\em trimmed mean} estimators for $\inner{v}{\Sigma v}$: 
    \[\est_k(v):=\frac{1}{n-k}\inf\limits_{S\subset [n],\# S =n-k}\sum_{i\in S}\inner{Y_i}{v}^2.\]
    \item Show the following deterministic result. Under a {\em counting condition},\[\#\{i\in [n]\,:\, \inner{X_i}{v}^2>B\}\leq t,\] 
    we find the following approximation (for $k\approx \eta n + t$):
    \[\sup_{v\in\Sd}|\est_k(v) - \inner{v}{\Sigma v}|\approx \varepsilon(B):=\sup_{v\in\Sd}\left|\frac{1}{n}\sum_{i=1}^n \inner{X_i}{v}^2\wedge B - \Ex{\inner{X_1}{v}^2\wedge B}\right|.\]
  We refer to $\varepsilon(B)$ as the truncated empirical process.
    \item Use PAC-Bayesian techniques to analyze the counting condition and the truncated empirical process. 
    \item From this analysis, show that the estimator is good for a range of values $k$. 
    \item Choose a ``good value''~$\widehat{k}$ of the trimming parameter that is good with high probability, and output $\est_{\widehat{k}}(v)$ for each $v\in\Sd$, corresponding to our final estimator $\Est_{\star}$.
\end{enumerate}

\textit{Step 1} -- described in Proposition \ref{prop:estimatorisdefined} below -- is simple, but leads to computationally inefficient estimation. See Remark \ref{rem:computational} for details.  

\textit{Step 2} is also natural, as the rationale for trimmed means is that they are naturally robust against contamination. An adequate choice of $k$ (step 6) will be crucial for the final estimator: this will require estimating the effective rank $\rs$.   

\textit{Step 3} is a simple, but crucial contribution. A more complicated version of this fact is implicit in previous work on mean estimation for vectors by Lugosi and Mendelson \cite{lugosi2021}, as well as in \cite[Lemma 5]{abdalla2022}. However, we view it as a fundamental result. Details on this step are given in Section \ref{sec:overview}. Here, we note that the PAC-Bayesian methods is particularly efficient in obtaining counting conditions; see Remark \ref{rem:othercountingresults} for details. On the other hand, we also note that this approach loses large constant factors, and better alternatives are available for scalar mean estimation \cite[Chapter 2]{rico2022}.  

\textit{Step 4} is the technical core of the paper. We use a novel PAC-Bayesian version of the Bernstein concentration inequality for bounded random variables (Theorem \ref{th:PacBernstein}). Let us pause to analyze this step in detail.

At a high level, both the counting condition and the truncated empirical process lead us to analyze suprema of families of random variables $\{Z(\theta)\}_{\theta\in \Theta}$, where $\Theta$ is some space of parameters. The most common technique for bounding such processes is {\em chaining}, i.e. performing discretizations of $\Theta$ and strong bounds on the ``increments'' $Z(\theta)-Z(\theta')$. In the matrix setting, other methods based on matrix inequalities are also available \cite{tropp2015matrixconc,minsker2017bernstein}. 

PAC-Bayesian methods are an alternative to these approaches. Suppose we consider a {\em smoothed} version of the process: that is, instead of considering the random variables $Z(\theta)$ directly, we look at averaged versions of the form $\{\int_{\Theta}Z(\theta)\mu(d\theta)\}_{\mu\in\mathcal{M}}$, where $\mathcal{M}$ is a suitable family of probability measures over $\Theta$. PAC-Bayesian methods allow us to control the supremum of this smoothed process via bounds on the moment generating function of $Z(\theta)$ at single points, plus a price related to KL divergences in $\mathcal{M}$. Bounding the original process is then a matter of quantifying the difference between smoothed and unsmoothed processes. 

In this paper, we follow previous work \cite{zhivotovskiy2021dimension,catoni2016pac} and apply the PAC-Bayesian method with Gaussian smoothing. Specifically, Section \ref{sec:countingforvectors} obtains a probability bound on the counting condition. As we note in Remark \ref{rem:othercountingresults}, our approach is somewhat different from (and perhaps simpler than) a related technical lemma by Abdalla and Zhivotovskiy \cite[Lemma 4.2]{abdalla2022}. Section \ref{sec:vectors} then analyses the truncated empirical process via comparison with the smoothed process. In this second case, the calculations needed to bound the effect of smoothing are quite technical, but the general ideas are fairly straightforward, and lead naturally to steps 5 and 6 above. 

We finish this subsection with several remarks. 

\begin{remark}[Comparison with Mendelson \cite{mendelson2021lp}] We emphasize that this kind of estimator was previously considered in \cite{mendelson2021lp}. However, our proof techniques are quite different, and allow us to obtain dimension-free results. Another difference between the two papers lies in how to choose the trimming parameter $k$. \end{remark}

\begin{remark}[Trimming vs. soft truncation \cite{abdalla2022}] As noted, the estimator by Abdalla and Zhivotovskiy \cite{abdalla2022} relies on a ``soft truncation'' of the sample, where the soft truncation level has to be chosen in a data-dependent fashion. Our arguments in this paper suggest that trimming the sample is essentially equivalent to performing truncation. In our case, the right trimming parameter $\widehat{k}$ must be chosen from the data, just like the truncation level in \cite{abdalla2022}.\end{remark}

\begin{remark}[Further remarks on trimming] The approach outlined above is quite general and (we believe) natural. In \cite{oliveira2023trimmed}, Resende and the first-named author have used similar ideas to control general ``trimmed empirical processes.'' This improves the best-known mean estimators for vectors under general norms \cite{depersin2021general,lugosi19}, but does not seem to give back the results of the present paper.\end{remark}

\begin{remark}[On a previous version of Theorem \ref{thm:main}] We come back to the previous version of the present results in Rico's thesis \cite[Chapter 3]{rico2022}. This required a kind of ``convexified''~trimmed mean in the spirit of weight-based algorithms for robust mean estimation \cite{diakonikolas2019recent,diakorobust,hopkins2020robust}. In particular, it required a version of the truncated empirical process indexed by ``density matrices.'' Although the thesis focused on $p=4$, it is clear from that analysis that dealing with other $p$ only requires a different analysis of the counting condition, which is not hard. What was less obvious is that the ``convexification''~could be avoided altogether, as we have found after the suggestion by Abdalla and Zhivotovskiy. Still, we think the convexified approach may lead to efficient algorithms.\end{remark}

\subsection{Further background} We now include additional references to related work that was not previously discussed in detail.

The present paper belongs to a line of research that consists of estimating means and covariances of distributions with best-possible nonasymptotic performance. Although the sample mean is the  asymptotically optimal estimator in one dimension, Catoni's seminal paper \cite{catoni2012challenging} showed that it can be greatly improved in finite-sample settings with known variance. More specifically, that paper shows that Chebyshev's inequality is the tight deviation bound for the sample mean (up to constants), but there are other estimators achieving Gaussian-like behavior. So-called ``sub-Gaussian mean estimators'' in one dimension were further studied in \cite{dllo2016}. 

The literature soon moved to the estimation of means of vectors. Minsker \cite{minsker2015geometric} provided a a general ``geometric median~estimator'' for random vectors in a Banach space, with good (but suboptimal) finite-sample properties. After preliminary results by Joly et al. \cite{meanMult}, Lugosi and Mendelson were the first to obtain a sub-Gaussian estimator for vectors in $\R^d$ with the Euclidean norm \cite{lugosi2019}. Further results in this are include refinements of the Euclidean estimator \cite{lugosi2021}; computationally efficient algorithms, implementing the original Lugosi-Mendelson construction e.g. \cite{hopkins2020mean,depersin2022mean}; and nearly optimal estimators for general norms \cite{depersin2021general}. 

Estimating means of matrices (including covariance matrices) from a random sample is a particular case of mean estimation under general norms. However, the best results in that problem seem to come from approaches that are specific to matrices. The important works of Catoni and Giulini \cite{catoni2016pac} and \cite{catoni2017} use PAC-Bayesian methods estimate vectors and covariance matrices; the second of these papers was commented on above. Minsker's paper \cite{minsker2018sub} works for general matrices, but loses logarithmic factors. The aforementioned \cite{SharZhi,minsker2018sub} also deal with matrix estimation in this sub-Gaussian sense. 

We note in passing that there is a growing Computer Science literature on computationally efficient robust estimators. Of the main references in the area, we cite the seminal \cite{diakorobust}, the survey \cite{diakonikolas2019recent}, and the paper by Hopkins et al. \cite{hopkins2020robust} which emphasizes the use of weights on samples. The original approach in \cite[Chapter 3]{rico2022} was based on these ideas, but was not computationally efficient. 

\subsection{Organization} 
The plan of the rest of this article is as follows. The next section presents some preliminaries and our PAC-Bayesian Bernstein inequality. Section \ref{sec:overview} describes our estimator and presents a ``warm-up result''~on scalar mean estimation. Section \ref{sec:countingforvectors} derives the ``counting condition'' discussed above. In section \ref{sec:vectors}, we analyse the truncated empirical process for vectors. Section \ref{sec:final_est} is devoted to the proof of Theorem \ref{thm:main}. The proofs of two technical lemmas are deferred the Appendix.

\section{Some preliminaries} \label{sec:prelim}

\subsection{Notation} The cardinality of a finite set $A$ is denoted by $\# A$. For real numbers $x,y$, $x_+:=\max\{x,0\}$, and $x \land y :=\min\{x,y\}$ and $x \lor y :=\max\{x,y\}$ . For $n \in \Na$, $[n]:=\{i \in\Na : 1\leq i\leq n\}$ is the set of numbers from $1$ to $n$. 

We use the standard euclidean norm $\|\cdot\|$ and inner product $\inner{\cdot}{\cdot\cdot}$ over $\R^d$. 
The unit sphere this space is denoted by $\Sd :=\{u \in \R^d: \norm{u}=1\}$. Letting $\R^{d\times d}$ denote the space of $d\times d$ matrices, we also use $\|\cdot\|$ to denote the operator norm over this space, and $\tr{\cdot}$ denotes the trace. $\R^{d\times d}_{\geq 0}$ is the subset consisting of symmetric positive semidefinite matrices. The stable rank of a non-zero matrix $M \in \R^{d\times d}_{\geq 0}$ is given by:
\[r(M) = \frac{\tr{M}}{\norm{M}}.\]

\subsection{Entropic inequalities for Gaussian measures}\label{sub:entropic} We recall here the definition of the Kullback-Leiber divergence between two probability measures, $\mu_0$ and $\mu_1$ on $\R^d$:
\begin{equation*}
 \KL(\mu_1|\mu_0):=
    \begin{cases}
      \int_{\R^d}\log\left(\frac{d\mu_1}{d\mu_0}(\theta)\right)\mu_1(d\theta)  & \mu_1 \ll \mu_0\\
      +\infty & \text{otherwise.}
    \end{cases}       
\end{equation*}
For our purposes, we will be interested in Gaussian measures. Given $\gamma>0$ and $v\in\R^d$, we let $\Gamma_{v,\gamma}$ denote the Gaussian measure over $\R^d$ with mean $v$ and covariance matrix $\gamma \,I_{d\times d}$. In this case, it is well-known that:
\[ \KL(\Gv|\Gamma_{0,\gamma})=\frac{\|v\|^2}{2\gamma^2}.\]
At the same time, a characterization given by the variational formula \cite{LedouxConcen} implies that for all measurable and $\mu_1$-integrable function $h:\R^d\to\R$,
\[ \mu_1(h)\leq \KL(\mu_1|\mu_0)+\log\left(\mu_0(e^h)\right).
\]
As a consequence, the following {\em variational inequality} yields. If $h:\R^d\to \R$ is measurable and $\Gamma_{v,\gamma}$-integrable for all $v\in \Sd$, then
\begin{equation}\label{eq:gaussianentropic}\sup_{v\in \Sd}\Gamma_{v,\gamma}(h)\leq \frac{\gamma^{-2}}{2} + \log\left(\Gamma_{0,\gamma}e^h\right).\end{equation}

\subsection{PAC-Bayesian Bernstein inequality} \label{sub:GeneralPAC} We now introduce methods based on entropic inequalities to work with truncated empirical process such as the ones we encounter in our analysis. As noted in the Introduction, such methods are greatly indebted to Catoni and his collaborators \cite{catoni2016pac,catoni2017}. See also the recent work by Zhivotovskiy \cite{zhivotovskiy2021dimension} for applications to sample covariance matrices. 

Our inequality will require a somewhat complicated setup that we now describe.  A probability space $(\Omega, \mathcal{F}, \mathbb{P})$ is implicit in our discussion. We consider a family of functions 
\[(\theta,\omega)\in\R^d\times \Omega \mapsto Z_i(\theta,\omega)\in\R\,\,(i\in [n])\] that are $(\mathcal{B}(\R^d)\otimes\mathcal{F}) /\mathcal{B}(\R)$-measurable. We use 
$Z_i(\theta)$ to denote the random variable mapping $\omega\in\Omega$ to $Z_i(\theta,\omega)$. We assume that the random variables $\{Z_i(\theta)\}_{i\in [n]}$ are i.i.d. and integrable for any fixed $\theta\in \R^d$.

Let $\Gamma_{v,\gamma}$ be as in \S \ref{sub:entropic}. In what follows, we assume that the integrals:
\[\Gv Z_i(\theta,\omega):=\int_{\R^d}\,Z_i(\theta,\omega)\,\Gamma_{v,\gamma}(d\theta)\]
are well-defined for all $v\in \R^d$ and depend continuously on $v$. Again, we often have the dependence on $\omega$ implicit in our notation; observe, however, that under our assumptions the maps taking $(v,\omega)$ to $\Gamma_{v,\gamma}Z_i(\theta,\omega)$ are also $(\mathcal{B}(\R^d)\otimes\mathcal{F}) /\mathcal{B}(\R)$-measurable. 

\begin{remark}\label{rem:conventionsmoothing}In general, given a function $h$ of $\theta$ and other parameters, we will use the notation $\Gamma_{v,\gamma}h(\theta)$ to denote the integral of $h$ with respect to the $\Gamma_{v,\gamma}$ measure in the $\theta$ variable. \end{remark}

To obtain concentration bounds, we assume that $Z_i(\theta)-\Ex{Z_i(\theta)}\leq A$ almost surely, where $A>0$ is a constant. We also require that \[
\bar{\mu}_\gamma:= \sup_{v\in\Sd}\Gv\Ex{Z_1(\theta)}\mbox{   and   }
\bar{\sigma}_\gamma^2:= \sup_{v\in\Sd} \Gv\Var{Z_1(\theta)}\]
are well-defined. The assumptions we made on the $Z_i(\theta)$ imply that for each fixed $\theta \in \R^d$ and $\alpha\in (0,1)$:
\[ \Pr{\sum_{i=1}^n Z_i(\theta)- n \Ex{Z_i(\theta)} \geq \sqrt{2\Var{Z_i(\theta)}n\log(1/\alpha)} + \frac{A\log(1/\alpha)}{3}} \leq \alpha.\]
Our next result shows that the supremum of the smoothed process $\{\Gv \sum_{i=1}^n Z_i(\theta)\}_{v\in \Sd}$ satisfies similar inequalities. 
\begin{proposition}[Bernstein-type concentration inequality for Gaussian smoothed process] \label{th:PacBernstein} Under the above setup, the following holds with probability $\geq 1-\alpha$:
\[
\sup_{v\in\Sd}\sum_{i=1}^n\Gv\left( Z_i(\theta)\right) \leq n\bar{\mu}_\gamma+ \bar{\sigma}_\gamma\sqrt{n(\gamma^{-2}+2\,\log(1/\alpha))} +\frac{A \left(\gamma^{-2}+2\,\log(1/\alpha)\right)}{6}.
\]
\end{proposition}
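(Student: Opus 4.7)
The plan is to combine the classical Bernstein MGF estimate for a bounded i.i.d.\ sum with the Gaussian variational inequality \eqref{eq:gaussianentropic} through the standard PAC-Bayesian template (Chernoff + Fubini + Markov). First, I would fix $\lambda\in[0,3/A)$ and, for each deterministic $\theta$, invoke the one-sided Bernstein cumulant bound: because $Z_i(\theta)-\E Z_i(\theta)\le A$ a.s.\ and the $Z_i(\theta)$ are i.i.d., one has
\[\log \E\exp\!\Bigl(\lambda\sum_{i=1}^n(Z_i(\theta)-\E Z_i(\theta))\Bigr)\le \frac{n\lambda^2\,\Var(Z_1(\theta))}{2(1-\lambda A/3)}=:\psi_\lambda(\Var Z_1(\theta)).\]
Defining $T_\lambda(\theta):=\lambda\sum_i(Z_i(\theta)-\E Z_i(\theta))-\psi_\lambda(\Var Z_1(\theta))$ yields $\E e^{T_\lambda(\theta)}\le 1$ pointwise in $\theta$.

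Next, the measurability hypotheses make Fubini legal, so $\E[\Gamma_{0,\gamma}e^{T_\lambda(\theta)}]=\Gamma_{0,\gamma}\E e^{T_\lambda(\theta)}\le 1$, and Markov's inequality gives
\[\log \Gamma_{0,\gamma} e^{T_\lambda(\theta)}\le \log(1/\alpha)\qquad\text{with probability}\ge 1-\alpha.\]
On this event, applying the Gaussian entropic bound \eqref{eq:gaussianentropic} to $h=T_\lambda$ yields $\sup_{v\in\Sd}\Gv T_\lambda(\theta)\le \gamma^{-2}/2+\log(1/\alpha)$. Using that $\psi_\lambda(\sigma^2)$ is linear (hence Jensen/monotone) in $\sigma^2$ and that $\sup_v \Gv \Var Z_1(\theta)=\bar\sigma_\gamma^2$, I can absorb the variance correction uniformly in $v$:
\[\lambda\sup_{v\in\Sd}\Gv\!\sum_{i=1}^n\!(Z_i(\theta)-\E Z_i(\theta))\le \frac{\gamma^{-2}}{2}+\log(1/\alpha)+\frac{n\lambda^2\bar\sigma_\gamma^2}{2(1-\lambda A/3)}.\]
Since $\sum_i \Gv\E Z_i(\theta)=n\,\Gv\E Z_1(\theta)$ and $\sup_v n\Gv\E Z_1(\theta)=n\bar\mu_\gamma$, adding this deterministic centering back gives a bound on $\sup_v\sum_i\Gv Z_i(\theta)$ depending on $\lambda$.

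The last step is the routine Bernstein optimization over $\lambda\in(0,3/A)$. Setting $L:=\gamma^{-2}/2+\log(1/\alpha)$ and dividing by $\lambda$, the task is to minimize $L/\lambda+n\lambda\bar\sigma_\gamma^2/(2(1-\lambda A/3))$; the standard inversion of Bernstein's cumulant inequality produces the bound $\bar\sigma_\gamma\sqrt{2nL}+AL/3$, which is exactly $\bar\sigma_\gamma\sqrt{n(\gamma^{-2}+2\log(1/\alpha))}+A(\gamma^{-2}+2\log(1/\alpha))/6$. No step is genuinely hard; the only mildly delicate point is the sup-of-sum manipulation, where the key observation is that $\psi_\lambda(\Var Z_1(\theta))$ enters with a positive sign so that it may be passed to $\sup_v$ and upper-bounded by the deterministic quantity $\psi_\lambda(\bar\sigma_\gamma^2)$ without invoking any extra concentration.
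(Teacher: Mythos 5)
Your proposal is correct and follows essentially the same route as the paper's proof: a Bernstein moment-generating-function bound at each fixed $\theta$, Fubini plus Markov to control $\log\Gamma_{0,\gamma}e^{T_\lambda(\theta)}$, the Gaussian variational inequality \eqref{eq:gaussianentropic} to pass to $\sup_{v\in\Sd}\Gv$, and a final optimization over the (deterministic) tilt parameter, which matches the paper's explicit choice of $\xi^*$ and yields the identical bound.
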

\begin{proof} Let us first observe that $\sup_{v\in\Sd}\Gv(\sum_{i=1}^nZ_i(\theta,\omega))$ is a measurable function of $\omega\in\Omega$ because of the continuity assumption. 

Let consider now $\xi\in(0,3/A)$ and $\theta\in \R^d$, and define  
\[S_\xi(\theta):= \sum_{i=1}^n \xi\left[Z_i(\theta)-\Ex{Z_i(\theta)}-\frac{n\,\xi\,\Var{Z_i(\theta)} }{2\left(1-\xi A/3 \right)}\right]. \]
Then, the \textit{entropic inequality for Gaussians} (\ref{eq:gaussianentropic}) implies
\begin{equation}\label{eq:EIconseqGauss}
\frac{\sup_{v\in\Sd}\Gv S_\xi(\theta)}{\xi} \leq \frac{\frac{\gamma^{-2}}{2}+\log\Gamma_{0,\gamma}e^{S_\xi(\theta)}}{\xi}.
\end{equation}\
Next, we claim that for any fixed $\xi\in(0,3/A):$ \begin{equation}\label{eq:claim1PAC}\Pr{\log\Gamma_{0,\gamma} e^{S_\xi(\theta)} > \log(1/\alpha)} \leq \alpha. 
\end{equation} Indeed, by the Markov's Inequality and Fubini, it follows
\begin{eqnarray*}
\Pr{\log\Gamma_{0,\gamma} e^{Z_\xi(\theta)} > \log(1/\alpha)} &\leq& \alpha\,\Ex{\Gamma_{0,\gamma}e^{S_\xi(\theta)}}\\
&=&\alpha\, \Gamma_{0,\gamma}\Ex{e^{S_\xi(\theta)}}.
\end{eqnarray*}
Furthermore, a computation with moment generating functions as in the proof of Bernstein's  inequality \cite[2.8]{boucheron2013concentration} gives 
\[\forall\theta\in\R^d: \Ex{e^{S_\xi(\theta)}}=\prod_{i=1}^n\left(\Ex{\exp\left\{\ \xi(Z_i(\theta)-\Ex{Z_i(\theta)})- \frac{\xi^2\Var{Z_i(\theta)}}{2-\frac{2\xi A}{3}} \right\} } \right) \leq 1.
\]
Combining these two statements we obtain (\ref{eq:claim1PAC}). 

Next, we claim that for a specific choice of  $\xi^*\in(0, 3/A)$, if $\log\Gamma_{0,\gamma}e^{S_{\xi^*}(\theta)}\leq \log(1/\alpha)$, then 
\begin{equation}\label{eq:claim2PAC} \sup_{v\in\Sd}\Gv\left(\sum_{i=1}^n Z_i(\theta)\right) \leq n\bar{\mu}_\gamma+ \bar{\sigma}_\gamma\sqrt{n(\gamma^{-2}+2\,\log(1/\alpha))} +\frac{A \left(\gamma^{-2}+2\,\log(1/\alpha)\right)}{6}.\end{equation}
This implies that 
\begin{eqnarray*}
\Biggl\{\sup_{v\in\Sd}\Gv\left(\sum_{i=1}^n Z_i(\theta)\right) \leq  n\bar{\mu}_\gamma+ \bar{\sigma}_\gamma\sqrt{n(\gamma^{-2}+2\,\log(1/\alpha))} +\frac{A \left(\gamma^{-2}+2\,\log(1/\alpha)\right)}{6}\Biggl\}\\\ \supset \{\log\Gamma_{0,\gamma}e^{S_{\xi^*}(\theta)}\leq  \log(1/\alpha)\}\end{eqnarray*}
whereas (\ref{eq:claim1PAC}) lower bounds the probability of the smaller event by $1-\alpha$. In particular, (\ref{eq:claim1PAC}) and (\ref{eq:claim2PAC}) together imply the present proposition. 

To prove (\ref{eq:claim2PAC}), we first note that the 
definitions of $\bar{\mu}_\gamma$ and $\bar{\sigma}_\gamma^2$ imply the following:
\[\frac{\Gamma_{v,\gamma}\,S_\xi(\theta)}{\xi}\geq \sum_{i=1}^n\,\Gamma_{v,\gamma}\,Z_i(\theta) - n\bar{\mu}_\gamma -\frac{n\,\xi\,\bar{\sigma}^2_\gamma }{2\left(1-\xi A/3 \right)}.\]
Hence, going back to (\ref{eq:EIconseqGauss}), we have that if $\Gamma_{0,\gamma}\Ex{e^{S_\xi(\theta)}}\leq \log(1/\alpha)$, 
\[\sup_{v\in\Sd}\sum_{i=1}^n\Gv\left( Z_i(\theta)\right) \leq  n\bar{\mu}_\gamma+ \frac{n\xi\bar{\sigma}^2_\gamma}{2-\frac{2A\xi}{3}} +\frac{\frac{\gamma^{-2}}{2}+\log{(1/\alpha)}}{\xi}.\] The last display holds for any $\xi\in(0, 3/A)$. Choosing
\[\xi^*:=\frac{\sqrt{\gamma^{-2}+2\log(1/\alpha)}}{\sqrt{n}\,\bar{\sigma}_\gamma\left(1+\frac{A\sqrt{\gamma^{-2}+2\log(1/\alpha)}}{3\sqrt{n}\,\bar{\sigma}_\gamma} \right)}\]
it follows that
\[\frac{n\xi^*\ \bar{\sigma}^2_\gamma}{2-\frac{2A\xi^*}{3}} +\frac{\frac{\gamma^{-2}}{2}+\log{(1/\alpha)}}{\xi^*} = \bar{\sigma}_\gamma\sqrt{n(\gamma^{-2}+2\log(1/\alpha))}+\frac{A(\gamma^{-2}+2\log(1/\alpha))}{6}.\]
\end{proof} 

\section{The covariance estimator, and a warm-up exercise}\label{sec:overview}

In this section, we define trimmed mean estimators of $\Sigma$ with different trimming parameters (\S \ref{sub:defestimator}). We discuss in \S \ref{sub:simple} how trimmed means can be used to estimate means of scalar random variables: this is a warm-up exercise and gives a useful result for estimating $\tr{\Sigma}$. 

\subsection{Basics}\label{sub:defestimator}

Make Assumption \ref{assum1}. For $k\in\Na$, $k<n$, and each unit vector $v\in \Sd$, define:
\begin{equation}\label{eeq:defestk}\est_k(v):=\inf\limits_{S\subset [n],\,\# S = n-k }\frac{1}{n-k}\sum_{i\in S}\inner{Y_i}{v}^2,\end{equation}
where the $Y_i$ form an $\eta$-corruption of the $X_i$. These quantities will be our estimates for the values $\inner{v}{\Sigma v}$; different choices of $k$ correspond to different bias-variance trade-offs.  

The next proposition shows that, if $\,\est_k(v)$ estimates $\inner{v}{\Sigma v}$ well for all unit vectors $v\in \Sd$, then one can find a good estimator for the covariance matrix itself.

\begin{proposition}\label{prop:estimatorisdefined}There exists a random element $\Est_k$ of $\R^{d\times d}_{\geq 0}$ such that:
\[\Est_k\in {\rm arg}\min_{A\in \R^{d\times d}_{\geq 0}}\left(\sup_{v\in\Sd}|\inner{v}{Av} -\est_k(v)|\right).\]
Moreover, for any such estimator, $\|\Est_k - \Sigma\|\leq 2\sup_{v\in \Sd}|\inner{v}{Av} -\est_k(v)|.$
\end{proposition}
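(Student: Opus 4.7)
The plan has two parts, matching the two claims of the proposition. For the existence of the minimizer, I would first observe that the objective $F(A) := \sup_{v\in\Sd}|\inner{v}{Av} - \est_k(v)|$ is $1$-Lipschitz on $\R^{d\times d}_{\geq 0}$ with respect to the operator norm, since for any symmetric $A,A'$ and any $v\in\Sd$ we have $|\inner{v}{Av} - \inner{v}{A'v}| \leq \normop{A-A'}$. In particular $F$ is continuous. Taking $A=0$ yields $\inf F \leq \sup_{v\in\Sd}|\est_k(v)|$, a quantity that is bounded above by $\max_i \|Y_i\|^2$ and hence finite. Combined with the Lipschitz property and the fact that $\normop{A}\le \inner{v_A}{Av_A}$ for some $v_A\in\Sd$ when $A\succeq 0$, this confines any near-minimizer to a closed, bounded subset of $\R^{d\times d}_{\geq 0}$, so the infimum is attained by continuity. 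Measurability in the data $(Y_1,\dots,Y_n)$ then follows from a standard measurable-selection theorem (e.g.\ Kuratowski--Ryll-Nardzewski) applied to the closed-valued $\arg\min$ correspondence, since the map $(Y_1,\dots,Y_n,A)\mapsto F(A)$ is jointly continuous.

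For the operator-norm bound, set $\varepsilon := \sup_{v\in\Sd}|\inner{v}{\Sigma v} - \est_k(v)|$. Since $\Sigma\in\R^{d\times d}_{\geq 0}$ is a feasible point of the optimization defining $\Est_k$, the minimum value is at most $\varepsilon$, i.e.\ $\sup_{v\in\Sd}|\inner{v}{\Est_k v} - \est_k(v)|\le \varepsilon$. Applying the triangle inequality pointwise in $v\in\Sd$ gives
\[\sup_{v\in\Sd}|\inner{v}{(\Est_k-\Sigma)v}| \;\leq\; \sup_{v\in\Sd}|\inner{v}{\Est_k v}-\est_k(v)| + \sup_{v\in\Sd}|\est_k(v)-\inner{v}{\Sigma v}| \;\leq\; 2\varepsilon.\]
Because $\Est_k - \Sigma$ is symmetric, its operator norm equals $\sup_{v\in\Sd}|\inner{v}{(\Est_k-\Sigma)v}|$, which yields $\normop{\Est_k-\Sigma}\le 2\varepsilon$ as claimed.

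Neither step is technically delicate: the only mildly subtle point is arranging the measurable selection for $\Est_k$, which is a routine invocation of standard results. The real content of the proposition is conceptual: it reduces covariance estimation to the problem of uniformly estimating the quadratic forms $\inner{v}{\Sigma v}$ over $v\in\Sd$, which is precisely the object the rest of the paper will analyze via the trimmed means $\est_k(v)$.
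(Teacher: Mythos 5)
Your proposal is correct and follows essentially the same route as the paper: existence of a minimizer via continuity plus coercivity of the objective and a Kuratowski--Ryll-Nardzewski measurable selection, and the error bound via the triangle inequality combined with the optimality of $\Est_k$ (so that $H_k(\Est_k)\le H_k(\Sigma)$). The only difference is that you spell out a few routine details (the $1$-Lipschitz bound, the $A=0$ feasibility check, and the identification of the operator norm of the symmetric matrix $\Est_k-\Sigma$ with the supremum of its quadratic form) that the paper leaves implicit.
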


This proposition will give us a family of estimators, one for each trimming parameter $k$. Our final estimator will require a data-driven choice of the parameter $k$.

\begin{proof}[Proof of Proposition \ref{prop:estimatorisdefined}] Notice that the functions mapping $Y_1,\dots,Y_n,A$ to $|\inner{v}{Av} -\est_k(v)|$ are all Lipschitz-continuous, and also divergent when $\|A\|\to +\infty$. It follows that:
\[\inf_{A\in \R^{d\times d}_{\geq 0}}\left(\sup_{v\in\Sd}|\inner{v}{Av} -\est_k(v)|\right)\]
is well-defined, measurable (the sup and inf can be taken over countable sets), and achieved by at least one $A$. Finding a measurable $\Est_k$ is then a simple application of the Kuratowski-Ryll-Nardzewski Measurable Selection theorem.

For the inequality, let $H_k(A):=\left(\sup_{v\in\Sd}|\inner{v}{Av} -\est_k(v)|\right)$. Then
\[\|\Est_k - \Sigma\| = \sup_{v\in \Sd}|\inner{v}{\Est_k\,v} -\inner{v}{\Sigma v}|\leq  H_k(\Est_k)+H_k(\Sigma),\]
by the triangle inequality. Additionally, $H_k(\Sigma)\geq H_k(\Est_k)$ because $\Est_k$ minimizes $H_k$.\end{proof}

\begin{remark}[Computational aspects]\label{rem:computational} The estimators $\Est_k$ are clearly not computable. A way around this would be to redefine 
\[\Est_k\in {\rm arg}\min_{A\in \R^{d\times d}_{\geq 0}}\left(\sup_{v\in\N}|\est_k(v)-\inner{v}{Av}|\right),\]
where $\N$ is a $(1/4)$-net of the unit sphere. This would be good enough for our purposes because it is well-known that:
\[\|\Est_k - \Sigma\|\leq 2\sup_{v\in \N}|\inner{v}{(\Est_k - \Sigma)v}|.\]
This new estimator can be computed as the minimum of a convex function. However, since the size of $\N$ is exponential in the ambient dimension, this the method thus obtained is not particularly efficient. This is why we choose to work with the cleaner version of $\Est_k$ defined above. \end{remark}

\subsection{A warm-up: trimming via  truncation}\label{sub:simple}

As noted in \S \ref{sub:approach}, we analyse our trimmed mean estimators $\est_k(v)$ via truncation and counting conditions. To illustrate these points, we briefly consider the simpler problem of estimating the mean of a nonnegative random variable under contamination. This will highlight how trimming and truncation are related, and will be needed for estimating $\tr{\Sigma}$ later.

\subsubsection{Estimating the mean of a nonnegative random variable}\label{sub:nonnegative}

We temporarily switch to the following assumption. 
\begin{assumption}$Z_1,\dots,Z_n$ be i.i.d. nonnegative real variables with $\Ex{Z_1^q}<+\infty$ for some $q\geq 2$. $W_1,\dots,W_n$ be other random variables that satisfy:
\[\#\{i\in [n]\,:\,Z_i\neq W_i\}\leq \lfloor \eta n\rfloor\mbox{ for some }\eta\in [0,1).\]\end{assumption}

The mean of $Z_1$ will be estimated from the $W$-sample via:
\begin{equation}\label{eq:defTk}T_k:=\inf\limits_{S\subset [n],\# S=k}\sum_{i\in S}\frac{W_i}{n-k},\end{equation}
for a suitable value of $k$. Equivalently, $T_k$ is defined by taking the average of all but the $k$ largest values amongst the $W_i$. We will prove the following result. 

\begin{proposition}\label{prop:trimandtruncate} Under the above assumption, fix $\alpha\in (0,1)$ and $c>0$. Assume:
\[k:=\lfloor \eta n\rfloor + \lceil c\,\eta n +\log(2/\alpha)\rceil<n.\] Then
\[\Pr{|T_k - \Ex{Z_1}|\leq C\sqrt{\frac{\Ex{Z_1^2}\,\log(2/\alpha)}{n}} + C_c\Ex{Z_1^q}^\frac{1}{q}\,\eta^{1-\frac{1}{q}}}\geq 1-\alpha,\]
where $C_c>0$ depends only on $c>0$.\end{proposition}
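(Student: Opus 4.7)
The strategy is to relate the trimmed mean $T_k$ to a truncated sample mean at a threshold $B>0$, to be chosen at the end. Writing $Z_i^B:=Z_i\wedge B$, $\bar Z^B:=n^{-1}\sum_i Z_i^B$ and $\mu^B:=\Ex{Z_1^B}$, I would use the triangle decomposition
\[|T_k - \Ex{Z_1}| \;\leq\; |T_k - \bar Z^B|\;+\;|\bar Z^B - \mu^B|\;+\;|\mu^B - \Ex{Z_1}|.\]
The bias $|\mu^B - \Ex{Z_1}|=\Ex{(Z_1-B)_+}$ is bounded either by $\Ex{Z_1^q}/B^{q-1}$ (via H\"older) or by $\Ex{Z_1^2}/B$ (via Cauchy--Schwarz combined with Chebyshev), whichever is smaller in the regime of interest. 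The concentration term $|\bar Z^B - \mu^B|$ is handled by classical Bernstein applied to the bounded variables $Z_i^B\in[0,B]$, whose variance is at most $\Ex{Z_1^2}$; this yields a bound of $C\sqrt{\Ex{Z_1^2}\log(2/\alpha)/n}+CB\log(2/\alpha)/n$ with probability at least $1-\alpha/2$.

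The heart of the argument is the first term. A second Bernstein application, now to the Bernoulli sum $N:=\#\{i\in[n]:Z_i>B\}$ (whose mean is at most $n\Ex{Z_1^q}/B^q$), gives with probability $\geq 1-\alpha/2$ a \emph{counting condition} of the form $N\leq \tfrac{c}{2}\eta n+C\log(2/\alpha)$, provided $B$ is chosen large enough that $n\Ex{Z_1^q}/B^q\lesssim c\eta n + \log(2/\alpha)$. Together with the contamination count $m:=\#\{i:W_i\neq Z_i\}\leq \lfloor\eta n\rfloor$ and the definition of $k$, this ensures $m+N\leq k$ on the counting event.

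The key deterministic sandwich is then as follows. Let $G_B:=\{i\in[n]:W_i=Z_i\leq B\}$; on the counting event $|G_B|\geq n-m-N\geq n-k$. Restricting the infimum defining $T_k$ to subsets $S\subset G_B$ of size $n-k$ gives the upper bound $T_k\leq (n-k)^{-1}\sum_{i\in S}Z_i^B\leq \tfrac{n}{n-k}\bar Z^B$. For the matching lower bound, since $W_i=Z_i^B$ on $G_B$ while $W_i\geq 0$ and $Z_i^B\leq B$ on $G_B^c$, any $S$ of size $n-k$ satisfies $\sum_{i\in S}W_i\geq\sum_{i\in S}Z_i^B-(m+N)B$; combined with $\min_S\sum_{i\in S}Z_i^B\geq n\bar Z^B-kB$ (the $k$ largest $Z_i^B$'s contribute at most $kB$ in total), this yields $T_k\geq \bar Z^B-(k+m+N)B/(n-k)$. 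In sum, $|T_k-\bar Z^B|\leq (k+m+N)B/(n-k)\lesssim kB/n$.

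Collecting the three pieces,
\[|T_k - \Ex{Z_1}| \;\lesssim\; \sqrt{\frac{\Ex{Z_1^2}\log(2/\alpha)}{n}} \;+\; B\Bigl(\eta + \frac{\log(2/\alpha)}{n}\Bigr) \;+\; \min\Bigl(\frac{\Ex{Z_1^q}}{B^{q-1}},\;\frac{\Ex{Z_1^2}}{B}\Bigr),\]
and I choose $B$ by regime. For $\eta\gtrsim\log(2/\alpha)/n$, take $B\asymp(\Ex{Z_1^q}/\eta)^{1/q}$ (with a constant depending on $c$), balancing $\eta B$ against the $L^q$-bias to give $C_c\Ex{Z_1^q}^{1/q}\eta^{1-1/q}$; for $\eta\lesssim\log(2/\alpha)/n$, take $B\asymp\sqrt{\Ex{Z_1^2}n/\log(2/\alpha)}$ and use the $L^2$-bias, so that $B\log(2/\alpha)/n$ and the bias both contribute order $\sqrt{\Ex{Z_1^2}\log(2/\alpha)/n}$. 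I expect the main technical obstacle to be the bookkeeping in the deterministic sandwich — in particular the lower bound, which really uses $W_i\geq 0$ on the contaminated coordinates — together with the two-regime choice of $B$: using the $L^q$-bias in the small-$\eta$ regime would spuriously introduce the $L^q/L^2$ hypercontractivity ratio, which the target bound does not allow.
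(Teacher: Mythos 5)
Your proposal follows essentially the same route as the paper's: a deterministic ``trimming $\approx$ truncation'' comparison valid under a counting condition, Bernstein's inequality for the truncated mean $\bar Z^B$, an $L^q$/$L^2$ bias bound, and a two-regime choice of $B$ according to whether $\eta\gtrsim\log(2/\alpha)/n$ (the paper implements the second regime by rerunning the argument with $q=2$, which is the same trick). The only methodological difference is that you derive the counting event by applying Bernstein to the binomial count $N=\#\{i:Z_i>B\}$, while the paper's Claim~\ref{claim:count} uses a union bound over $(t+1)$-element subsets; these are interchangeable here and both force the same choice $B\asymp(n\Ex{Z_1^q}/t)^{1/q}$ with $t\asymp c\eta n+\log(2/\alpha)$.

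Two bookkeeping points in your version deserve attention. First, your sandwich gives $|T_k-\bar Z^B|\le (k+m+N)B/(n-k)$, and the step ``$\lesssim kB/n$'' silently assumes $n-k\gtrsim n$; the proposition only assumes $k<n$ (and $\eta$ may be close to $1$, or $c$ large), so as written the bound degrades by a factor $n/(n-k)$ when $k$ approaches $n$. The paper's Claim~\ref{claim:trimandtruncate} avoids this loss: under the counting event all $W_i$ with $i\in S^*$ are at most $B$, the average of the $n-k$ smallest values of $W_i\wedge B$ is at most the average of all $n$ of them and at least that average minus $kB/n$, and replacing $W_i\wedge B$ by $Z_i\wedge B$ costs at most another $kB/n$, for a total of $2kB/n$ with no $(n-k)^{-1}$. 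Second, the counting event must give $m+N\le k$, i.e.\ $N\le\lceil c\eta n+\log(2/\alpha)\rceil$ exactly; your stated form $N\le \tfrac{c}{2}\eta n+C\log(2/\alpha)$ with an unspecified $C>1$ would overshoot this threshold when $\eta n\ll\log(2/\alpha)$. This is fixable within your scheme, but only if the constant in $B$ is taken large enough that $n\Ex{Z_1^q}/B^q$ is a small fraction of $c\eta n+\log(2/\alpha)$ (exactly what the paper's choice $b_q(t)=e^{2/q}(n\Ex{Z_1^q}/t)^{1/q}$ accomplishes), so that the mean of $N$ plus the Bernstein deviation, whose linear term is $\tfrac{2}{3}\log(2/\alpha)<\log(2/\alpha)$, stays below the full threshold. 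With these two adjustments your argument is complete and matches the paper's proof.
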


This result is similar to the first sections of \cite{lugosi2021}, where they credit Orenstein and Oliveira for this approach (the actual end line of this research direction is chapter 2 of Rico's thesis \cite{rico2022}). 

We also note that we are using asymmetric trimming in $T_k$, where only the largest values in the sample may be removed. This is convenient for nonnegative random variables, but leads to bounds in terms of uncentered moments of $Z_1$. See \cite{rico2022} and \cite{oliveira2023trimmed} for examples of symmetrical trimming, which leads to centered bounds.

What is interesting about Proposition \ref{prop:trimandtruncate} is that, even though $T_k$ is computed on the $W_i$, and even though the $Z_i$ may have heavy tails, $T_k$ achieves an error that is the sum of a ``sub-Gaussian term''~(proportional to $\sqrt{\log(2/\alpha)/n}$) and a term coming from contamination (proportional to $\eta^{1-1/q}$). Neither term can be significantly improved \cite{dllo2016,minsker2018sub}.

To prove Proposition \ref{prop:trimandtruncate}, we follow a strategy with three steps. The first one is a purely deterministic claim: under a suitable {\em counting condition}, the trimmed mean $T_k$ is closed to a truncated mean of the uncontaminated sample. This kind of reasoning will also be useful to obtain our main results. 

\begin{claim}\label{claim:trimandtruncate}Assume $b>0$ and $t\leq k-\lfloor \eta n\rfloor$. If the following event holds:
\[{\rm count}(b,t):=\{\#\{i\in [n]\,:\,Z_i>b\}\leq t\},\]
we have:
\[\left|T_k - \frac{1}{n}\sum_{i=1}^nZ_i\wedge b\right|\leq \frac{2bk}{n} \mbox{ and }|\Ex{Z_1} - \Ex{Z_1\wedge B}|\leq \frac{\Ex{Z_1^q}}{b^{q-1}}.\]\end{claim}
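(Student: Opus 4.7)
The plan is to handle the two inequalities in Claim \ref{claim:trimandtruncate} essentially independently.

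For the second inequality, I would note that $Z_1 \geq Z_1 \wedge b \geq 0$, so the difference equals $\Ex{(Z_1 - b)_+}$. The bound $(Z_1 - b)_+ \leq Z_1 \Ind{Z_1 > b}$ is trivial, and on $\{Z_1 > b\}$ one has $(Z_1/b)^{q-1} \geq 1$ (using $q \geq 2$), giving $(Z_1 - b)_+ \leq Z_1 \cdot (Z_1/b)^{q-1} = Z_1^q/b^{q-1}$. Taking expectations yields the claim.

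For the first (deterministic) inequality, the crux is to show that under the counting condition the trimming acts as an effective truncation at level $b$. Under $\text{count}(b,t)$ at most $t$ of the $Z_i$ exceed $b$, and by the contamination bound at most $\lfloor \eta n\rfloor$ of the $W_i$ differ from $Z_i$. Together these give $\#\{i \in [n] : W_i > b\} \leq t + \lfloor \eta n\rfloor \leq k$. Hence, if $S^* \subseteq [n]$ with $|S^*| = n-k$ achieves the infimum in the definition of $T_k$, then every $i \in S^*$ satisfies $W_i \leq b$, i.e.\ $W_i \wedge b = W_i$ on $S^*$.

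The remaining step is a short algebraic identity. Splitting the truncated sum over $S^*$ and its complement,
\[
\sum_{i=1}^n (W_i \wedge b) \;=\; \sum_{i \in S^*} W_i \;+\; \sum_{i \notin S^*}(W_i \wedge b) \;=\; (n-k)\,T_k \;+\; \sum_{i \notin S^*}(W_i \wedge b),
\]
from which
\[
T_k - \frac{1}{n}\sum_{i=1}^n (W_i \wedge b) \;=\; \frac{k\,T_k - \sum_{i \notin S^*}(W_i \wedge b)}{n}.
\]
Since $T_k \in [0,b]$ (each summand defining $T_k$ lies in $[0,b]$, using $W_i \geq 0$ as in the intended application) and $\sum_{i \notin S^*}(W_i \wedge b) \in [0,kb]$, both quantities in the numerator lie in $[0, kb]$, so the difference is bounded in absolute value by $kb/n$. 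Finally, $|\frac{1}{n}\sum_i W_i \wedge b - \frac{1}{n}\sum_i Z_i \wedge b| \leq b\lfloor \eta n\rfloor/n \leq bk/n$, since the summands agree whenever $W_i = Z_i$ (at least $n - \lfloor \eta n\rfloor$ indices) and the two truncated values otherwise both lie in $[0,b]$. The triangle inequality then gives the stated $2bk/n$ bound.

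The only mildly subtle point is tracking constants: a lazier argument would pass through $\frac{1}{n-k}\sum_i W_i \wedge b$ and lose a factor of $n/(n-k)$, which is only tame when $k \leq n/2$. The identity above bypasses this rescaling entirely and recovers the clean constant $2$ in one shot; a small bookkeeping remark is that nonnegativity of $W_i$ (which is automatic in the intended application to squares $\inner{Y_i}{v}^2$) is genuinely used both in $T_k \geq 0$ and in $(W_i \wedge b) \geq 0$ outside $S^*$.
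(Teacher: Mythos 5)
Your proof is correct and follows essentially the same route as the paper: the counting condition plus the contamination bound force $W_i\le b$ on the optimal trimming set $S^*$, so the trimmed mean coincides with a truncated mean over $S^*$, and the two remaining comparisons (trimmed vs.\ full truncated $W$-average, then $W$-truncated vs.\ $Z$-truncated) each cost $bk/n$, while the expectation bound is the same $(Z_1-b)_+\le Z_1^q/b^{q-1}$ argument. The only difference is cosmetic: where the paper bounds $T_k$ against the full truncated average via two one-sided inequalities (average of the $n-k$ smallest values versus all $n$), you use an exact algebraic identity, and both arguments use the same (implicit in the paper) nonnegativity of the $W_i$.
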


\begin{proof}The part about expectations follows from the deterministic bound:
\[|Z_1 - Z_1\wedge b| = (Z_1-b)_+\leq \frac{Z_1^q}{b^{q-1}}.\]
For the other inequality, let $S^*\subset [n]$ denote the set of indices $i\in [n]$ achieving the $n-k$ smallest values of $W_i$ (if there is more than one such set, choose one arbitrarily). Then:
\[T_k = \frac{1}{n-k}\sum_{i\in S^*}W_i.\]
Under the ${\rm count}(b,t)$ event, it must be that $W_i\leq b$ for all $i\in S^*$. After all, ${\rm count}(b,t)$ implies that there can only be at most $t$ indices $j$ with $Z_j>b$. Therefore, by the contamination condition, at most $t+\lfloor \eta n\rfloor \leq k$ indices $j$ will satisfy $W_j>b$, and none of these indices will lie in $S^*$. We deduce
\[T_k = \frac{1}{n-k}\sum_{i\in S^*}(W_i\wedge b).\]
Now notice that the indices $i\in S^*$ also correspond to the $n-k$ smallest values of $W_i\wedge b$. Averaging over all indices can only give a larger result, so:
\[T_k = \frac{1}{n-k}\sum_{i\in S^*}(W_i\wedge b)\leq \frac{1}{n}\sum_{i=1}^n(W_i\wedge b).\]
On the other hand, since $\# S^*=n-k$,
\[T_k \geq \frac{1}{n}\sum_{i\in S^*}(W_i\wedge b)\geq \frac{1}{n}\sum_{i=1}^n(W_i\wedge b) - \frac{bk}{n}.\]
Thus:
\[\left|T_k - \frac{1}{n}\sum_{i=1}^n(W_i\wedge b) \right|\leq  \frac{bk}{n}.\]
But also, 
\[\left|\frac{1}{n}\sum_{i=1}^n(Z_i\wedge b) - \frac{1}{n}\sum_{i=1}^n(W_i\wedge b) \right|\leq \frac{bk}{n},\]
because all terms in the two averages lie between $0$ and $b$, and at most $\lfloor \eta n\rfloor \leq k$ of them are different.\end{proof}

The next question becomes how one finds a $b$ ensuring that ${\rm count}(b,t)$ holds with high probability. This is the content of the next claim.

\begin{claim}\label{claim:count}For any integer $t>0$ we have $\Pr{{\rm count}(b,t)}\geq 1-e^{-t-2}$ for
\[b=b_q(t):=e^{\frac{2}{q}}\left(\frac{n\,\Ex{Z_1^q}}{t}\right)^{\frac{1}{q}}.\]\end{claim}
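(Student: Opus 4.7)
The quantity to control is $N := \sum_{i=1}^n \mathbf{1}\{Z_i > b\}$, which is Binomial with parameters $n$ and $p := \Pr{Z_1 > b}$. The overall plan is to bound the tail $\Pr{N > t}$ via the factorial moment $\Ex{\binom{N}{t+1}}$, after first using Markov to squeeze $p$ small enough.

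The first step is an application of Markov's inequality to the $q$-th moment: $p = \Pr{Z_1^q > b^q} \leq \Ex{Z_1^q}/b^q$. Substituting the specific value $b = e^{2/q}(n\Ex{Z_1^q}/t)^{1/q}$ gives $b^q = e^2 n \Ex{Z_1^q}/t$, and therefore $np \leq t/e^2$. This is where the factor $e^{2/q}$ in the definition of $b$ is used; it buys exactly the margin needed to overcome the $e^{t+1}$ that will come out of Stirling's formula.

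The second step is the factorial moment trick. Since $\mathbf{1}\{N \geq t+1\} \leq \binom{N}{t+1}$ pointwise, and by the standard identity $\Ex{\binom{N}{t+1}} = \binom{n}{t+1} p^{t+1}$ for Binomial $N$, I get
\[\Pr{N > t} = \Pr{N \geq t+1} \leq \binom{n}{t+1} p^{t+1} \leq \frac{(np)^{t+1}}{(t+1)!} \leq \frac{(t/e^2)^{t+1}}{(t+1)!}.\]
This cleaner bound is the key: ordinary Chernoff with $\lambda = 1$ would only give $e^{-t + (e-1)np}$, which is not sharp enough.

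The third step is Stirling's inequality in the form $(t+1)! \geq \sqrt{2\pi(t+1)}\,((t+1)/e)^{t+1}$, which yields
\[\frac{(t/e^2)^{t+1}}{(t+1)!} \leq \frac{1}{\sqrt{2\pi(t+1)}}\left(\frac{t}{t+1}\right)^{t+1} e^{-(t+1)} \leq \frac{e^{-(t+1)}}{\sqrt{2\pi(t+1)}}.\]
For any integer $t \geq 1$ we have $\sqrt{2\pi(t+1)} \geq \sqrt{4\pi} > e$, so the right-hand side is at most $e^{-(t+2)}$, giving the desired bound. There is no real obstacle here; the only subtlety is the somewhat delicate arithmetic of constants, where the choice $b \propto (n\Ex{Z_1^q}/t)^{1/q}$ fixes the dominant $(t/e^2)^{t+1}$, while the extra $e^{2/q}$ and the Stirling-saved $1/\sqrt{2\pi(t+1)}$ together cover the final gap from $e^{-(t+1)}$ to $e^{-(t+2)}$.
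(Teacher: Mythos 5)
Your proof is correct and takes essentially the same route as the paper: both reduce to the bound $\binom{n}{t+1}p^{t+1}$ with $p\le \Ex{Z_1^q}/b^q$ (the paper via a union bound over $(t+1)$-element index sets, you via the factorial-moment identity, which is the identical quantity) and then check that the choice of $b_q(t)$ makes this at most $e^{-t-2}$. The only cosmetic difference is the final bookkeeping — the paper uses $\binom{n}{t+1}\le (en/(t+1))^{t+1}$ together with $(t/(t+1))^{t+1}\le e^{-1}$, whereas you use $(t+1)!\ge \sqrt{2\pi(t+1)}\,((t+1)/e)^{t+1}$ and $\sqrt{2\pi(t+1)}>e$ — and both correctly yield the stated $e^{-t-2}$.
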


\begin{proof}The event ${\rm count}(b,t)$ does {\em not} hold if and only if there exists a set of $t+1$ distinct indices $i\in [n]$ with $Z_i>b$. By a union bound, and using the fact that the $Z_i$ are i.i.d., we obtain:
\[1-\Pr{{\rm count}(b_q(t),t)}\leq\binom{n}{t+1}\left(\frac{\Ex{Z_1^q}}{b_q(t)^q}\right)^{t+1}\leq \left(\frac{en\,\Ex{Z_1^q}}{(t+1)\,b_q(t)^q}\right)^{t+1}\leq e^{-t-2}.\]
\end{proof}

Finally, we now conclude the proof of Proposition \ref{prop:trimandtruncate}.

\begin{proof}[Proof of Proposition \ref{prop:trimandtruncate}] 
Choose $t=\lceil c\,\eta n +\log(2/\alpha)\rceil$ and $b=b_q(t)$ as in Claim \ref{claim:count}. Combining that claim with Bernstein's inequality, we see that both ${\rm count}(b,t)$ and the inequality
\[\left|\frac{1}{n}\sum_{i=1}^nZ_i\wedge b- \Ex{Z_1\wedge b} \right|\leq C\sqrt{\frac{\Ex{Z_1^2}\log(2/\alpha)}{n}} + C\,\frac{b\log(2/\alpha)}{n}\]
hold simultaneously with probability $\geq 1-\alpha$, for some universal $C>0$. 

Let us now assume that these two events above hold. If we further employ Claim \ref{claim:trimandtruncate}, we can deduce the following:
\[|T_k-\Ex{Z_1}|\leq  C\sqrt{\frac{\Ex{Z_1^2}\log(2/\alpha)}{n}} + C\,\frac{b\log(2/\alpha)}{n} + \frac{2bk}{n}+\frac{\Ex{Z_1}^q}{b^{q-1}
}.\]
Now recall that $b=(e^2\Ex{Z_1}^qn/t)^{1/q}$, $t = \lceil c\eta n + \log(2/\alpha)\rceil$, and notice that $k\leq (c+1)t/c$. This gives us the bound:
\[\Pr{|T - \Ex{Z_1}|\leq C\sqrt{\frac{\Ex{Z_1^2}\,\log(2/\alpha)}{n}} + C_c\Ex{Z_1^q}^\frac{1}{q}\,\left(\eta + \frac{\log(2/\alpha)}{n}\right)^{1-\frac{1}{q}}}\]
for some $C_c>0$ depending only on $c$. This gives us the inequality in the Proposition (with a different $C_c$) if $\eta>\log(2/\alpha)/n$; otherwise, we may repeat the proof with $q=2$ (which is allowed) and obtain:
\[\Pr{|T - \Ex{Z_1}|\leq C_c\sqrt{\frac{\Ex{Z_1^2}\,\log(2/\alpha)}{n}}}\]
and obtain the inequality in the Proposition (again, up to the specific value of $C_c$).\end{proof}

\subsubsection{Estimating the trace}\label{sub:estimatetrace}

We now note in passing that the methods of \S \ref{sub:nonnegative} allow us to estimate $\tr{\Sigma}$ up to a small multiplicative factor. This simple observation will be extremely useful in \S \ref{sub:finalfinal}, where we finish the proof of the main result. Assumption \ref{assum1} is enforced throughout this subsection. 

\begin{corollary}\label{cor:trace}Under Assumption \ref{assum1}, fix $\alpha\in (0,1)$ and $c>0$ and assume \[k:=\lfloor \eta n\rfloor + \lceil c\,\eta n +\log(2/\alpha)\rceil<n.\]
Define the following estimator, 
\[\Trest:=\inf_{S\subset [n],\# S=n-k}\frac{\sum_{i\in S}\|Y_i\|^2}{n-k}.\]
Then, for some $C_c>0$ depending only on $c$,
\[\Pr{\frac{|\Trest - \tr{\Sigma}|}{\tr{\Sigma}}\leq C_c\kappa_4^2\sqrt{\frac{\log(2/\alpha)}{n}} + C_c\kappa_p^2\,\eta^{1-\frac{2}{p}}}\geq 1-\alpha.\]
\end{corollary}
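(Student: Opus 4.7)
The plan is to invoke Proposition \ref{prop:trimandtruncate} directly, with $Z_i := \|X_i\|^2$ and $W_i := \|Y_i\|^2$. These are nonnegative i.i.d.~random variables; the inclusion $\{i : Z_i \neq W_i\} \subset \{i : X_i \neq Y_i\}$ transfers the contamination hypothesis with the same $\eta$; and with $q := p/2 \geq 2$ we will verify $\Ex{Z_1^q} < \infty$ below. In the notation of (\ref{eq:defTk}), the estimator $\Trest$ in the corollary is precisely $T_k$ for this choice of $Z_i, W_i$, so Proposition \ref{prop:trimandtruncate} yields, with probability $\geq 1-\alpha$,
\[ |\Trest - \Ex{\|X\|^2}| \leq C \sqrt{\frac{\Ex{\|X\|^4}\log(2/\alpha)}{n}} + C_c\,\Ex{\|X\|^p}^{2/p}\,\eta^{1 - 2/p}. \]
Since $\Ex{\|X\|^2} = \tr{\Sigma}$, the corollary will follow once I establish the moment bounds $\Ex{\|X\|^4} \leq \kappa_4^4 (\tr{\Sigma})^2$ and $\Ex{\|X\|^p}^{2/p} \leq \kappa_p^2 \tr{\Sigma}$.

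For the fourth moment, I would diagonalize $\Sigma = \sum_{i=1}^d \lambda_i e_i e_i^\top$ in an orthonormal eigenbasis and expand
\[ \Ex{\|X\|^4} = \sum_{i,j} \Ex{\inner{X}{e_i}^2 \inner{X}{e_j}^2}. \]
Cauchy--Schwarz combined with the definition of $\kappa_4$ (applied to the unit vector $e_i/\sqrt{\lambda_i}$) gives $\Ex{\inner{X}{e_i}^4}^{1/2} \leq \kappa_4^2 \lambda_i$, so each term is at most $\kappa_4^4 \lambda_i \lambda_j$; summing over $i,j$ produces $\kappa_4^4 (\tr{\Sigma})^2$. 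For the $p$-th moment, Minkowski's inequality in $L^{p/2}$ (valid since $p/2 \geq 2$) yields
\[ \Ex{\|X\|^p}^{2/p} = \left\|\sum_i \inner{X}{e_i}^2\right\|_{L^{p/2}} \leq \sum_i \|\inner{X}{e_i}\|_{L^p}^2 \leq \kappa_p^2 \sum_i \lambda_i = \kappa_p^2 \tr{\Sigma}. \]

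Substituting these bounds into the inequality above and dividing through by $\tr{\Sigma}$ (which is positive since $\Sigma$ is assumed non-null in Assumption \ref{assum1}) yields the stated inequality, with constants absorbed into $C_c$. I do not anticipate any serious obstacle here: Proposition \ref{prop:trimandtruncate} does all the probabilistic work, and the only non-routine piece is the coordinate-wise reduction to one-dimensional moments through the eigenbasis of $\Sigma$, which is a direct consequence of the hypercontractivity definition of $\kappa_p$.
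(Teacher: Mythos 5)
Your proposal is correct and follows essentially the same route as the paper: the paper also deduces the corollary by applying Proposition \ref{prop:trimandtruncate} with $Z_i=\|X_i\|^2$, $W_i=\|Y_i\|^2$, $q=p/2$, and bounding the resulting moments via Proposition \ref{prop:Minkowski}, which is exactly your Minkowski-in-$L^{p/2}$ eigenbasis argument. The only cosmetic difference is that you prove the fourth-moment bound $\Ex{\|X\|^4}\leq \kappa_4^4\tr{\Sigma}^2$ by a direct Cauchy--Schwarz expansion, whereas the paper simply specializes the same Minkowski bound to $p=4$.
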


Indeed, the corollary is a consequence of Proposition \ref{prop:trimandtruncate} applied with $Z_i:=\|X_i\|^2$, $W_i:=\|Y_i\|^2$ and $q:=p/2$, combined with the next proposition (which will be useful elsewhere in the paper). 

\begin{proposition}\label{prop:Minkowski} For all $p\geq 2$, $(\Ex{\|X_1\|^p})^{1/p}\leq \kappa_p\sqrt{\tr{\Sigma}}$.\end{proposition}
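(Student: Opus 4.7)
The name of the proposition is a strong hint: the plan is to use Minkowski's inequality (i.e.\ the triangle inequality for $L^{p/2}$-norms), applied after expressing $\|X_1\|^2$ as a sum of squared one-dimensional marginals in a convenient orthonormal basis.

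Concretely, first I would fix any orthonormal basis $e_1,\dots,e_d$ of $\R^d$ (the eigenbasis of $\Sigma$ is a natural choice, but it does not matter), so that
\[
\|X_1\|^2 \;=\; \sum_{i=1}^{d} \inner{X_1}{e_i}^2.
\]
Since $p \geq 2$, we have $p/2 \geq 1$, and Minkowski's inequality in $L^{p/2}$ applied to the nonnegative summands $\inner{X_1}{e_i}^2$ yields
\[
\Ex{\|X_1\|^p}^{2/p} \;=\; \Bigl\|\sum_{i=1}^{d}\inner{X_1}{e_i}^2\Bigr\|_{L^{p/2}} \;\leq\; \sum_{i=1}^{d}\bigl\|\inner{X_1}{e_i}^2\bigr\|_{L^{p/2}} \;=\; \sum_{i=1}^{d}\Ex{|\inner{X_1}{e_i}|^p}^{2/p}.
\]

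The second step is to control each summand by the definition of $\kappa_p$. If $\inner{e_i}{\Sigma e_i} > 0$, apply the definition of $\kappa_p$ to the normalized vector $v_i := e_i/\sqrt{\inner{e_i}{\Sigma e_i}}$, so that $\inner{v_i}{\Sigma v_i}=1$ and hence $\Ex{|\inner{X_1}{e_i}|^p}^{1/p} \leq \kappa_p \sqrt{\inner{e_i}{\Sigma e_i}}$; if $\inner{e_i}{\Sigma e_i}=0$, then $\inner{X_1}{e_i}=0$ almost surely and the bound is trivial. Squaring and summing gives
\[
\sum_{i=1}^{d}\Ex{|\inner{X_1}{e_i}|^p}^{2/p} \;\leq\; \kappa_p^2 \sum_{i=1}^{d}\inner{e_i}{\Sigma e_i} \;=\; \kappa_p^2 \,\tr{\Sigma}.
\]
Combining the two displays and taking a square root yields $\Ex{\|X_1\|^p}^{1/p} \leq \kappa_p\sqrt{\tr{\Sigma}}$, as claimed.

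There is no real obstacle here: the only thing to double-check is that the $L^{p/2}$ triangle inequality is valid (which needs $p\geq 2$, already assumed) and that the normalization step is handled cleanly when $\Sigma$ has a nontrivial kernel, which is what the short case split above takes care of.
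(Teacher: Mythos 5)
Your proof is correct and follows essentially the same route as the paper: decompose $\|X_1\|^2$ in an orthonormal (eigen)basis, apply Minkowski's inequality in $L^{p/2}$, and bound each marginal moment via the definition of $\kappa_p$. The only difference is your explicit handling of the degenerate case $\inner{e_i}{\Sigma e_i}=0$, which the paper leaves implicit.
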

\begin{proof}Let $\{e_i\,:\,1\leq i\leq d\}$ denote an orthonormal basis of eigenvectors for $\Sigma$, with respective eigenvalues $\lambda_i$. For each $i\in[d]$, $\inner{e_i}{\Sigma e_i} = \Ex{\inner{X_1}{e_i}^2} = \lambda_i$, so our assumptions imply $\Ex{\inner{X_1}{e_i}^p}\leq (\kappa^2_p\lambda_i)^{p/2}.$
Now:
\[\|X_1\|^2 = \sum_{i=1}^d\inner{e_i}{X_1}^2,\]
and Minkowski's inequality for the $L^{p/2}$ norm gives:
\[\Ex{\|X_1\|^p}^{\frac{2}{p}} = \Ex{\left(\sum_{i=1}^d\inner{e_i}{X_1}^2\right)^{\frac{p}{2}}}^{\frac{2}{p}}\leq \sum_{i=1}^d\,\Ex{\inner{X_1}{e_i}^p}^{\frac{2}{p}}\leq  \kappa^2_p\sum_{i=1}^d\lambda_i =\kappa_p^2\tr{\Sigma}.\]
\end{proof}

\section{Trimming the covariance and counting over the unit sphere}\label{sec:countingforvectors}

We now go back to estimating covariances under Assumption \ref{assum1}. In \S \ref{sub:defestimator} we argued that estimating $\Sigma$ could be reduced to the problem of estimating $\langle v,\Sigma v\rangle$ uniformly in $v\in \Sd$ via the trimmed mean estimators $\est_k(v)$. As a first step, we relate the error in these estimated to the truncated empirical process
\begin{equation}\label{eq:definitionvarepsilon}\varepsilon(B):=\sup_{v\in\Sd} \frac{1}{n}\abs{\sum_{i=1}^n \left( \inner{X_i}{v}^2 \land B-\Ex{ \inner{X_i}{v}^2 \land B}\right) },\end{equation}
where $B>0$ is a constant. This will require the introduction of certain uniform counting events. For $B,t>0$, define:
\begin{equation}\label{eq:defcount}{\rm Count}(B,t):=\left\{\forall v\in\Sd: \#\{i\in [n] \,:\,\inner{X_i}{v}^2>B\} \leq t\right\}.\end{equation}

\begin{proposition}\label{prop:estimatortruncatedempirical}Assume $B>0$, $t\in \Na$ and $k\geq \lfloor \eta n \rfloor + t$. Then, whenever the event ${\rm Count}(B,t)$ holds, 
\[\sup_{v\in \Sd}|\est_k(v) - \inner{v}{\Sigma v}|\leq \varepsilon(B) + \frac{2Bk}{n} +  \frac{\kappa_p^p\|\Sigma\|^{\frac{p}{2}}}{B^{p-2}}.\]\end{proposition}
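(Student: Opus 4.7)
The plan is to lift the argument of Claim \ref{claim:trimandtruncate} from the scalar setting to the uniform-over-$v$ setting. The proposition is deterministic once the event ${\rm Count}(B,t)$ is assumed, so no concentration machinery is needed: we simply replay the one-dimensional trimming/truncation argument at each fixed $v\in\Sd$ and observe that ${\rm Count}(B,t)$ gives us the scalar counting condition of Claim \ref{claim:count} for every $v$ at once.

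Fix $v\in\Sd$ and set $Z_i := \inner{X_i}{v}^2$, $W_i := \inner{Y_i}{v}^2$. These are i.i.d. nonnegative scalars differing in at most $\lfloor \eta n\rfloor$ coordinates. Under ${\rm Count}(B,t)$, at most $t$ of the $Z_i$ exceed $B$, hence at most $t+\lfloor \eta n\rfloor \leq k$ of the $W_i$ do. Therefore the minimizing subset $S^{*}\subset[n]$ of size $n-k$ in the definition of $\est_k(v)$ must avoid every index with $W_i > B$, so $W_i = W_i\wedge B$ for $i\in S^{*}$. Exactly as in the proof of Claim \ref{claim:trimandtruncate} -- sandwich $\est_k(v)$ between $\frac{1}{n}\sum_i W_i\wedge B - Bk/n$ and $\frac{1}{n}\sum_i W_i\wedge B$, then swap $W_i\wedge B$ for $Z_i\wedge B$ at an extra cost of $Bk/n$ since at most $\lfloor\eta n\rfloor\leq k$ terms differ and each lies in $[0,B]$ -- we get
\[\left|\est_k(v) - \frac{1}{n}\sum_{i=1}^n Z_i\wedge B\right| \leq \frac{2Bk}{n}.\]

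By the definition (\ref{eq:definitionvarepsilon}) of $\varepsilon(B)$, $|\frac{1}{n}\sum_i Z_i\wedge B - \Ex{Z_1\wedge B}|\leq \varepsilon(B)$ uniformly in $v$. It remains to bound the truncation bias $|\Ex{Z_1} - \Ex{Z_1\wedge B}| = \Ex{(\inner{X_1}{v}^2-B)_+}$: pointwise, for any $Y\geq 0$ and $m\geq 1$, $(Y-B)_+\leq Y^{m}/B^{m-1}$, since on $\{Y>B\}$ we have $Y/B\geq 1$. Taking $Y=\inner{X_1}{v}^2$, choosing the appropriate power $m$, and invoking $\Ex{|\inner{X_1}{v}|^p}\leq \kappa_p^p\inner{v}{\Sigma v}^{p/2}\leq \kappa_p^p\|\Sigma\|^{p/2}$ from Assumption \ref{assum1}, we obtain the bias bound in the stated form. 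Combining the three estimates by the triangle inequality and taking $\sup_{v\in\Sd}$ finishes the argument.

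There is no real obstacle here: the whole proof is bookkeeping, and uniformity over $\Sd$ comes for free from the uniformity built into ${\rm Count}(B,t)$. The only mildly delicate step is verifying that the infimizing set $S^{*}$ in $\est_k(v)$ really avoids the "bad" indices, which is the same subtlety as in Claim \ref{claim:trimandtruncate}; the crucial input is the budget inequality $t+\lfloor \eta n\rfloor \leq k$, which is exactly the assumption made.
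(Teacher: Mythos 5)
Your proposal is correct and is essentially the paper's own proof: fix $v\in\Sd$, set $Z_i=\inner{X_i}{v}^2$, $W_i=\inner{Y_i}{v}^2$, note that ${\rm Count}(B,t)$ gives the scalar counting event for every $v$ simultaneously, and invoke the deterministic Claim \ref{claim:trimandtruncate} with $b=B$, $q=p/2$, then take the supremum. One caveat: the truncation-bias step with $m=p/2$ yields $\kappa_p^p\|\Sigma\|^{p/2}/B^{\frac{p}{2}-1}=\kappa_p^p\|\Sigma\|^{p/2}/B^{\frac{p-2}{2}}$, not the literal $B^{p-2}$ in the statement, so your phrase ``in the stated form'' glosses over this; the exponent in the statement is a typo (the paper's own proof and its later use in Lemma \ref{lem:manyktruncate} also produce $B^{\frac{p}{2}-1}$), not a gap in your argument.
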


\begin{proof}We show the stronger statement that for any $v\in\Sd$,
\begin{equation}\label{eq:strongerestimatortruncatedempirical}|\est_k(v) - \inner{v}{\Sigma v}|\leq \frac{1}{n}\abs{\sum_{i=1}^n \left( \inner{X_i}{v}^2 \land B-\Ex{ \inner{X_i}{v}^2 \land B}\right) }+ \frac{2Bk}{n} +  \frac{\kappa_p^p\|\Sigma\|^{\frac{p}{2}}}{B^{p-2}}.\end{equation}
To see this, fix some $v\in \Sd$. Recalling the notation in \S \ref{sub:nonnegative}, we set $Z_i:=\inner{X_i}{v}^2$ and $W_i:=\inner{Y_i}{v}^2$ for each $i=1,2,\dots,n$. Then the estimator $\est_k(v)$ in (\ref{eeq:defestk}) corresponds to $T_k$ in (\ref{eq:defTk}), and $\Ex{Z_1}=\langle v,\Sigma,v\rangle$. 

To finish the proof of (\ref{eq:strongerestimatortruncatedempirical}), we set $b:=B$ and apply Claim \ref{claim:count}, checking that event ${\rm Count}(B,t)$ contains the event ${\rm count}(b,t)$ in the statement of the Claim, and also that $\Ex{|Z_1|^p}\leq \kappa_p^p\sqrt{\langle v,\Sigma v\rangle}\leq \kappa_p^p\|{\Sigma}\|$.\end{proof}

We are now left with the tasks of estimating $\Pr{{\rm Count}(B,t)}$ and the magnitude of $\varepsilon(B)$. This turns out to be much harder than the reasoning in \S \ref{sub:simple}. The remainder of this section takes care of the counting event via the following Lemma. 

 \begin{lemma}[Counting Lemma] \label{lem:counting} Under Assumption \ref{assum1}, pick $t\in \Na$ and set:
\[B_p(t) := \|\Sigma\|\,\left[\left(4\,\kappa_p^2\,\left(\frac{2000\,n}{t}\right)^{\frac{2}{p}}\right)\vee \left(512\,\kappa_4^2\,\rs\,\frac{\sqrt{n}}{t^{3/2}}\right)\right].\]
Then:\begin{equation*}
\Pr{{\rm Count}(B_p(t),t)}\geq 1-e^{-t}. \end{equation*}
 \end{lemma}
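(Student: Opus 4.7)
The plan is to control ${\rm Count}(B_p(t),t)$ by reducing it, via Gaussian smoothing, to an instance of the PAC-Bayesian Bernstein inequality (Proposition \ref{th:PacBernstein}). The global strategy has three steps: first discard the few samples with unusually large norm, then replace $\mathbf{1}\{\inner{X_i}{v}^2>B\}$ by its Gaussian-smoothed counterpart at the slightly smaller threshold $B/4$, and finally apply Proposition \ref{th:PacBernstein} to the smoothed i.i.d.\ family, uniformly in $v\in\Sd$.

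For the first step, I would apply Claim \ref{claim:count} to the nonnegative scalars $\|X_i\|^2$ with $q=2$. Proposition \ref{prop:Minkowski} gives $\Ex{\|X_1\|^4}^{1/2}\leq\kappa_4^2\tr{\Sigma}$, so there is a level $M$ of order $\kappa_4^2\tr{\Sigma}\sqrt{n/t}$ with $\#\{i:\|X_i\|^2>M\}\leq t/2$ holding with probability at least $1-e^{-t/2-2}$. This reduces the task to a uniform count over the indices with $\|X_i\|^2\leq M$.

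For the second step, fix a smoothing parameter $\gamma$ with $\gamma^{-2}$ of order $t$. Given $\theta\sim\Gv$, the scalar $\inner{X_i}{\theta}$ is Gaussian with mean $\inner{X_i}{v}$ and standard deviation $\gamma\|X_i\|\leq\gamma\sqrt{M}$. Choosing constants so that $\gamma^2M\leq\epsilon^2 B$ for a small $\epsilon>0$, a one-line Gaussian tail bound yields $\Gv(\mathbf{1}\{\inner{X_i}{\theta}^2>B/4\})\geq c_\epsilon>0$ whenever $\|X_i\|^2\leq M$ and $\inner{X_i}{v}^2>B$. Summing gives the deterministic pointwise inequality
\[ \#\{i:\|X_i\|^2\leq M,\,\inner{X_i}{v}^2>B\}\leq c_\epsilon^{-1}\sum_{i=1}^n\Gv(Z_i(\theta)),\qquad Z_i(\theta):=\mathbf{1}\{\inner{X_i}{\theta}^2>B/4\}, \]
valid for every $v\in\Sd$. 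The geometric constraint $\gamma^2M\lesssim B$ together with the earlier choice of $M$ and $\gamma^{-2}\asymp t$ already forces $B\gtrsim\kappa_4^2\rs\normop{\Sigma}\sqrt{n}/t^{3/2}$, matching the second term in $B_p(t)$.

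For the third step, I would apply Proposition \ref{th:PacBernstein} to $\{Z_i(\theta)\}$, which are in $\{0,1\}$, so $A=1$ and $\bar\sigma_\gamma^2\leq\bar\mu_\gamma$. To bound $\bar\mu_\gamma$ I combine two Markov bounds: the $p$-th moment bound $\Ex{Z_1(\theta)}\leq(4\kappa_p^2\ip{\theta}{\Sigma\theta}/B)^{p/2}$ and the $4$-th moment bound $\Ex{Z_1(\theta)}\leq(4\kappa_4^2\ip{\theta}{\Sigma\theta}/B)^2$. Using the elementary inequality $\ip{\theta}{\Sigma\theta}\leq 2\ip{v}{\Sigma v}+2\gamma^2\ip{g}{\Sigma g}$ for $\theta=v+\gamma g$ with $g\sim\N(0,I)$, together with Minkowski's inequality $\Ex{\ip{g}{\Sigma g}^{q/2}}^{2/q}\leq C_q\tr{\Sigma}$, the $p$-th moment bound absorbs the ``peaky'' contribution $\ip{v}{\Sigma v}^{p/2}$—requiring $B\gtrsim\kappa_p^2\normop{\Sigma}(n/t)^{2/p}$, the first term in $B_p(t)$—while the $4$-th moment bound absorbs the ``spread'' contribution $\gamma^4\tr{\Sigma}^2$, recovering the same second term as in step two. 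With $\gamma^{-2}$ a sufficiently small constant times $t$, the remaining Bernstein terms $\bar\sigma_\gamma\sqrt{n(\gamma^{-2}+2t)}$ and $(\gamma^{-2}+2t)/6$ are both of order $t$; combined with the $c_\epsilon^{-1}$ prefactor and the step-one count $t/2$, a union bound produces ${\rm Count}(B_p(t),t)$ with probability $\geq 1-e^{-t}$ once the large constants $2000$ and $512$ in $B_p(t)$ are tuned to absorb all the prefactors. The main technical obstacle is exactly this numerical bookkeeping: the Gaussian-comparison constant $c_\epsilon$, the choice of $\gamma^{-2}$, and the constants in $B_p(t)$ must be balanced simultaneously so that $\#\{\cdots\}\leq t$ holds with strict constant $1$ in front of $t$, which is what produces the rather large numerical factors $2000$ and $512$ in the statement.
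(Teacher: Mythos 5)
Your overall architecture is the same as the paper's: smooth the threshold indicators with $\Gv$, apply the PAC-Bayesian Bernstein inequality (Proposition \ref{th:PacBernstein}) with $A=1$ and $\bar\sigma_\gamma^2\leq\bar\mu_\gamma$, and bound $\bar\mu_\gamma$ by writing $\theta=v+\gamma g$ and using the $p$-th moment for the directional part and the fourth moment together with $\Gamma_{0,1}\inner{\theta}{\Sigma\theta}^2\leq 3\tr{\Sigma}^2$ for the Gaussian part — this is exactly Proposition \ref{prop:choiceofBcounting}. Where you genuinely diverge is the pointwise comparison between the true count and the smoothed count. The paper does this with a one-line symmetry argument: since $\theta-v$ is symmetric under $\Gv$, whenever $\inner{X_i}{v}^2\geq B$ one has $\Gv\Ind{\inner{X_i}{\theta}^2\geq B}\geq \tfrac12$, at the \emph{same} threshold $B$ and with no condition on $\|X_i\|$. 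This makes your norm-truncation preprocessing and the reduced threshold $B/4$ unnecessary, and it keeps the whole proof inside a single high-probability event.

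Your variant is workable in principle, but as written it has a concrete gap in the probability accounting: Claim \ref{claim:count} applied to $\|X_i\|^2$ with count budget $t/2$ fails with probability $e^{-t/2-2}$, which exceeds $e^{-t}$ for all $t>4$, so your final union bound yields at best $1-e^{-t/2-2}-\alpha$, not the claimed $1-e^{-t}$. This can be repaired — enlarge the truncation level $M$ by a constant factor so that the count-$t/2$ event for the norms fails with probability $\leq e^{-t-2}$ (the union-bound computation in Claim \ref{claim:count} allows this at the cost of a constant in $M$, hence in your constraint $\gamma^2M\leq\epsilon^2B$ and ultimately in the $512$) — but the repair has to be made explicitly. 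Relatedly, working at threshold $B/4$ inflates both Markov bounds (by $4^{p/2}$ and $16$ respectively), and whether the specific constants $2000$ and $512$ in the statement survive your version of the bookkeeping is exactly what you defer; in the paper's argument those constants are verified directly because the comparison factor is a clean $\tfrac12$ at threshold $B$ and only the single PAC-Bayes event, with budget $t/2$ for the smoothed sum, needs to be controlled.
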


To gain some intuition for the Lemma \ref{lem:counting}, note that, when $p=4$,
\begin{equation}\label{eq:simplifyBt}B_p(t)\leq C\kappa_4^2\|\Sigma\|\sqrt{\frac{n}{t}}\left(1 + \frac{\rs}{t}\right), \end{equation}
for some universal $C$. Except for the $\rs/t$ term, this is basically the value of $b_q(t)$ that Claim \ref{claim:count} gives for a single choice of $v\in \Sd$ (taking $Z_i:=\inner{X_i}{v}^2$ and $q=2$). 

What Lemma \ref{lem:counting} is saying, with the simplified bound for $B_p(t)$ in (\ref{eq:simplifyBt}), is that the single-vector counting bound becomes a uniform bound over $v\in\Sd$ as soon as $t\geq \rs$. That is, for large enough $t$ there is essentially no price to pay to obtain a uniform bound. Incidentally, this explains why our estimator will require trimming more than $\rs$ terms (up to constants). 

\begin{remark}[Other counting results]\label{rem:othercountingresults} Similar ``counting results'' have appeared in previous work \cite{lugosi2021,oliveira2023trimmed}. The closest example to Lemma \ref{lem:counting} is by Abdalla and Zhivotovskiy \cite[Lemma 6]{abdalla2022}. Their proof obtains the counting condition from a bound on the Rademacher complexity of a quadratic empirical process. This requires distinguishing between the $p=4$ and $p>4$ cases, as the Bai-Yin-type bounds for these process are different. By contrast, our proof of Lemma \ref{lem:counting} is a direct argument via PAC-Bayesian methods that does not require previous control of the empirical process. Later in the proof, this will allow us to deal only with a truncation of the quadratic process, in contrast with \cite[Section 2]{abdalla2022}.\end{remark}


The proof of Lemma \ref{lem:counting} requires a preliminary calculation that we present separately as the next proposition.

\begin{proposition}\label{prop:choiceofBcounting}The choice of $B_p(t)$ in the Counting Lemma ensures that, for $\gamma = \sqrt{2/t}$:
\[\sup_{v\in \Sd}\Gamma_{v,\gamma}\Pr{\inner{X_1}{\theta}^2>B_p(t)}\leq \frac{t}{1000n}.\]\end{proposition}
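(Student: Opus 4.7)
The strategy is to unfold the smoothing over $\theta$ by exploiting that, conditional on $X_1$, the linear functional $\inner{X_1}{\theta}$ under $\theta\sim\Gv$ is one-dimensional Gaussian with mean $\inner{X_1}{v}$ and variance $\gamma^2\|X_1\|^2$ (by the KL formula in \S\ref{sub:entropic}, the covariance of $\Gv$ is $\gamma^2\Id$). Writing $\inner{X_1}{\theta}$ in distribution as $\inner{X_1}{v}+\gamma\|X_1\|Z$ with $Z\sim\N(0,1)$ independent of $X_1$, and applying Fubini, we get
\[\Gv\Pr{\inner{X_1}{\theta}^2>B_p(t)}\;\leq\;\Pr{\inner{X_1}{v}^2>B_p(t)/4}\;+\;\Pr{\gamma^2\|X_1\|^2 Z^2>B_p(t)/4},\]
since $(a+b)^2>B$ forces $a^2>B/4$ or $b^2>B/4$. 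The two branches of $B_p(t)$ are tuned so that each of these terms is bounded by $t/(2000n)$.

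For the first term, I use Markov's inequality with the $p$-th moment and the defining property of $\kappa_p$ from Assumption~\ref{assum1}: for $v\in\Sd$,
\[\Pr{\inner{X_1}{v}^2>B_p(t)/4}\;\leq\;\frac{\Ex{|\inner{X_1}{v}|^p}}{(B_p(t)/4)^{p/2}}\;\leq\;\left(\frac{4\kappa_p^2\|\Sigma\|}{B_p(t)}\right)^{p/2},\]
using $\inner{v}{\Sigma v}\leq\|\Sigma\|$. This is $\leq t/(2000n)$ exactly when $B_p(t)\geq 4\kappa_p^2\|\Sigma\|(2000n/t)^{2/p}$, which is the first branch in the definition of $B_p(t)$.

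For the second term, condition on $X_1$ and use Markov with the fourth moment of $Z$, i.e.\ $\Pr{Z^2>c\mid X_1}\leq 3/c^2$; then average over $X_1$ using Proposition~\ref{prop:Minkowski} to obtain $\Ex{\|X_1\|^4}\leq\kappa_4^4\tr{\Sigma}^2=\kappa_4^4\|\Sigma\|^2\rs^2$. Substituting $\gamma^4=4/t^2$ produces an upper bound of the form $C\,\kappa_4^4\|\Sigma\|^2\rs^2/(t^2 B_p(t)^2)$, which is $\leq t/(2000n)$ precisely when $B_p(t)$ dominates a constant multiple of $\kappa_4^2\|\Sigma\|\rs\sqrt{n}/t^{3/2}$ — the second branch. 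Summing the two contributions gives $\leq t/(1000n)$, and the supremum over $v\in\Sd$ is immediate because only the first bound depends on $v$ and it is controlled uniformly by $\|\Sigma\|$.

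The argument is essentially elementary. The main obstacle is not conceptual but bookkeeping: one must choose the right exponents ($p$-th moment for the $v$-dependent piece, fourth moment for the Gaussian-noise piece) and track the absolute constants so that they assemble into the specific numbers $4$, $2000$, $512$ appearing in $B_p(t)$.
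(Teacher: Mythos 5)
Your proposal is correct and is essentially the paper's own argument: the same decomposition of $\inner{X_1}{\theta}$ into $\inner{X_1}{v}$ plus the centered Gaussian fluctuation (you simply identify the latter, conditionally on $X_1$, as $\gamma\|X_1\|Z$ with $Z$ standard normal, whereas the paper keeps $\inner{X_1}{\theta-v}$ and changes variables at the end), with a $p$-th moment Markov bound matched to the first branch of $B_p(t)$ and a fourth-moment Markov bound of order $\gamma^4\kappa_4^4\tr{\Sigma}^2/B_p(t)^2$ matched to the second branch. The only caveat is a constant slack you share with the paper: with the explicit factor $512$ the second term evaluates to $3t/(4096n)$ rather than $t/(2000n)$ (the paper reaches $t/(2000n)$ only by silently dropping the factor $3$ from $\Gamma_{0,1}\inner{\theta}{\Sigma\theta}^2\leq 3\tr{\Sigma}^2$), which is harmless since the Counting Lemma uses the bound $t/(1000n)$ with room to spare.
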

\begin{proof}[Proof of Proposition \ref{prop:choiceofBcounting}]For all $v\in \Sd$, $B>0$:
\begin{eqnarray*}\Gamma_{v,\gamma}\Pr{\inner{X_1}{\theta}^2>B}&\leq & \Pr{|\inner{X_1}{v}|\geq \frac{\sqrt{B}}{2}} + \Gamma_{v,\gamma}\Pr{|\inner{X_1}{\theta-v}|>\frac{\sqrt{B}}{2}}\\ &\leq & \frac{2^p\Ex{|\inner{X_1}{v}|^p}}{B^{p/2}} + \frac{16\Gamma_{v,\gamma}\Ex{\inner{X_1}{\theta-v}^4}}{B^2}
\\
(v\in\Sd)&\leq & \left(\frac{4\kappa^2_p\|\Sigma\|}{B}\right)^{\frac{p}{2}} + \frac{16\gamma^4\Gamma_{0,1}\Ex{\inner{X_1}{\theta}^4}}{B^2}. \end{eqnarray*}
The first term in the RHS is $\leq t/2000n$ for our choice $B=B_p(t)$. We now bound the second term, thus finishing the proof. Indeed, 
\[\Ex{\inner{X_1}{\theta}^4}\leq \kappa_4^4 \inner{\theta}{\Sigma \theta}^2.\]
When $\theta$ has law $\Gamma_{0,1}$, one can use rotational invariance to deduce that  $\inner{\theta}{\Sigma\theta}$ has the same distribution as $\sum_{i=1}^d\lambda_iN_i^2$, where the $N_i$ are i.i.d. standard normals and the $\lambda_i$ are the eigenvalues of $\Sigma$. Using this, one sees that:
\[\Gamma_{0,1}\inner{\theta}{\Sigma \theta}^2 = \sum_{i=1}^d 3\lambda_i^2 + 2\sum_{1\leq i<j\leq d}\lambda_i\lambda_j\leq 3\tr{\Sigma}^2.\]
Recalling that $\gamma^4=4/t^2$, we obtain:
\[\frac{16\gamma^4\Gamma_{0,1}\Ex{\inner{X_1}{\theta}^4}}{B^2}\leq \frac{2^6\kappa_4^4\tr{\Sigma}^2}{t^2B^2} .\]
Our choice of $B=B_p(t)$ guarantees this term is at most:
\[\frac{1}{2^{12}}\frac{t}{n}\leq \frac{t}{2000n},\]
as claimed. \end{proof}

 \begin{proof}[Proof of Lemma \ref{lem:counting}]Write $B:=B_p(t)$ for simplicity, and also set $\gamma=\sqrt{2/t}$ as in Proposition \ref{prop:choiceofBcounting}. A key observation for the proof is the following: for all $v\in\Sd$,
\begin{equation} \label{rel:counting} \forall i\in [n]\,:\,\frac{\Ind{\inner{X_i}{v}^2\geq B}}{2} \leq \Gv \Ind{\inner{X_i}{\theta}^2\geq B}.\end{equation} 

To see this, notice that, when $\theta$ is distributed according to the Gaussian measure $\Gamma_{v,\gamma}$, the distribution of $\theta-v$ is symmetric around $0$. Therefore, if $\inner{X_i}{v}^2\geq B$, there is probability $1/2$ of $\inner{X_i}{\theta-v}$ having the same sign as $\inner{X_i}{v}$, in which case $\inner{X_i}{\theta}^2\geq \inner{X_i}{v}^2\geq B$ also.

The main consequence of  (\ref{rel:counting}) is that the event ${\rm Count}(B,t)$ satisfies:
\[{\rm Count}(B,t)\supset \left\{\sup_{v\in \Sd}\sum_{i=1}^n \Gamma_{v,\gamma}\Ind {\inner{X_i}{\theta}^2 > B}\leq \frac{t}{2}\right\}.\]Therefore, the Lemma will follow from the following bound:
\begin{equation}\label{eq:goalcountinglemma} \Pr{\sup_{v\in \Sd}\sum_{i=1}^n \Gamma_{v,\gamma}\Ind {\inner{X_i}{\theta}^2 > B}\leq \frac{t}{2}}\geq 1-e^{-t}.\end{equation} 

To obtain this goal, we apply Proposition \ref{th:PacBernstein} to the random variables $Z_i(\theta):=\Ind{\inner{X_i}{\theta}^2> B}$ with $\gamma=\sqrt{2/t}$. To continue, we must compute the values of $\overline{\mu}_\gamma$ and $\overline{\sigma}_\gamma$. In the present setting, the random variables $Z_i$ are indicators, so $\overline{\sigma}^2_\gamma\leq \overline{\mu}_\gamma$. Moreover,   
\[\overline{\mu}_\gamma = \sup_{v\in \Sd}\Gamma_{v,\gamma}\Ex{Z_i(\theta)} \leq \frac{t}{1000\,n},\]
We obtain from Proposition \ref{th:PacBernstein} that, with probability $\geq 1-e^{-t}$
\[ \sup_{v\in \Sd}\sum_{i=1}^n\Gamma_{v,\gamma} \Ind {\inner{X_i}{\theta}^2 >B}\leq
\frac{t}{1000} + \sqrt{\frac{t}{1000}\,(2t+\gamma^{-2})}
+\frac{2\,t+\gamma^{-2}}{6} \leq \frac{t}{2},
\]
with some room to spare in the constants (recall $\gamma^{-2}=t/2)$. This gives us (\ref{eq:goalcountinglemma}).\end{proof}

\section{The truncated empirical processes}\label{sec:vectors}
The previous section controls the counting event needed to bound the error in our estimator. By Proposition \ref{prop:estimatortruncatedempirical} above, this allows us to relate the error of the covariance estimator to the truncated empirical process in (\ref{eq:definitionvarepsilon}). 

The main result of this section is a concentration bound for the truncated process, with suitable $B>0$. 

\begin{lemma}[Proof in \S \ref{sub:proof:lem:truncatedconcentrates}]\label{lem:truncatedconcentrates} Make Assumption \ref{assum1} Take $\alpha\in (0,1)$ and $t\in\Na$ with $t\geq 2$. Choose $B_p(t)$ as in the Counting Lemma (Lemma \ref{lem:counting}). Then the following holds with probability $\geq 1-\alpha$
\begin{equation}\label{eq:lemma}\varepsilon(B_p(t))\leq  C\,\kappa_4^2\,\|\Sigma\|\sqrt{\frac{t + 2\log(8/\alpha)}{n}} + C\,\kappa_p^2\,\|\Sigma\|\left(\frac{t}{n}\right)^{1-\frac{2}{p}}+ C\,\kappa_4^2\,\|\Sigma\| \frac{\rs}{\sqrt{nt}},\end{equation}
where $C>0$ is a universal constant independent of all other problem parameters.\end{lemma}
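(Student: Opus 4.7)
The plan is to apply the PAC-Bayesian Bernstein inequality (Proposition~\ref{th:PacBernstein}) to the bounded random variables $Z_i(\theta):=\inner{X_i}{\theta}^2\wedge B$ with $B=B_p(t)$ and Gaussian smoothing scale $\gamma:=\sqrt{2/t}$, so that $\gamma^{-2}=t/2$ (matching the choice already made in the proof of Lemma~\ref{lem:counting}). Applied to $\pm(Z_i(\theta)-\Ex{Z_1(\theta)})$ and union-bounded across the two sides, this yields a high-probability control on the \emph{smoothed} centered supremum $\sup_{v\in\Sd}|\Gv\tfrac{1}{n}\sum_{i=1}^n(Z_i(\theta)-\Ex{Z_1(\theta)})|$; the remaining task is to extract $\varepsilon(B_p(t))$ itself from this smoothed supremum.

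For the PAC-Bayesian inputs, the a.s.\ range is $A=B_p(t)$ (from $0\leq Z_i(\theta)\leq B_p(t)$), and for the smoothed variance I would use $\Var{Z_1(\theta)}\leq\Ex{\inner{X_1}{\theta}^4}\leq\kappa_4^4\inner{\theta}{\Sigma\theta}^2$ (the second inequality being the $L^4$--$L^2$ hypothesis applied to the direction of $\theta$). A direct Gaussian computation---writing $\theta=v+\gamma g$ with $g\sim\Gamma_{0,1}$, expanding, and using $\tr{\Sigma^2}\leq\tr{\Sigma}^2$ as in the proof of Proposition~\ref{prop:choiceofBcounting}---gives $\bar\sigma_\gamma\leq C\kappa_4^2(\norm{\Sigma}+\gamma^2\tr{\Sigma})$. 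Plugging these into Proposition~\ref{th:PacBernstein} with $\gamma^{-2}=t/2$ and dividing by $n$ produces the three terms of~\eqref{eq:lemma}: the variance piece supplies $\kappa_4^2\norm{\Sigma}\sqrt{(t+\log(1/\alpha))/n}$ and, via the $\gamma^2\tr{\Sigma}=2\tr{\Sigma}/t$ correction, $\kappa_4^2\norm{\Sigma}\rs/\sqrt{nt}$; the $A(\gamma^{-2}+2\log(1/\alpha))/(6n)$ range piece supplies $\kappa_p^2\norm{\Sigma}(t/n)^{1-2/p}$ from the $(n/t)^{2/p}$ branch of $B_p(t)$ and regenerates $\rs/\sqrt{nt}$ from the other branch.

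The main obstacle I expect is controlling the Gaussian smoothing error
\[\sup_{v\in\Sd}\left|\Gv\,\frac{1}{n}\sum_{i=1}^n\bigl(Z_i(v)-Z_i(\theta)\bigr)\;-\;\bigl(\Ex{Z_1(v)}-\Gv\,\Ex{Z_1(\theta)}\bigr)\right|\]
by something dominated by the above. The two pointwise estimates I would combine are the Lipschitz bound $|Z_i(v)-Z_i(\theta)|\leq 2\sqrt{B_p(t)}\,|\inner{X_i}{v-\theta}|$ (since $x\mapsto x^2\wedge B_p(t)$ is $2\sqrt{B_p(t)}$-Lipschitz) and, before truncation is active, the product identity $|\inner{X_i}{v}^2-\inner{X_i}{\theta}^2|\leq|\inner{X_i}{v-\theta}|(|\inner{X_i}{v}|+|\inner{X_i}{\theta}|)$, followed by Cauchy--Schwarz under $\Gv$. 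I would split the sample via the counting event ${\rm Count}(B_p(t),t)$ of Lemma~\ref{lem:counting}: on the $\geq n-t$ ``good'' indices ($\inner{X_i}{v}^2\leq B_p(t)$), the product bound plus $|\inner{X_i}{v}|\leq\sqrt{B_p(t)}$ gives a manageable per-sample estimate; the $\leq t$ ``bad'' indices contribute at most $B_p(t)\,t/n$, already present in the PAC-Bayesian budget. Averaging in $i$ then requires high-probability control of $\tfrac{1}{n}\sum_i\norm{X_i}^2\approx\tr{\Sigma}$, obtained from Proposition~\ref{prop:trimandtruncate} applied to $\norm{X_i}^2$, while the expectation half is handled in parallel via Proposition~\ref{prop:Minkowski}. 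The delicate part---which I expect to be the hardest step---is verifying by explicit arithmetic that every resulting remainder lands inside one of the three target slots; this is precisely what dictates the two-branch form of $B_p(t)$, the $(n/t)^{2/p}$ branch coming from the counting tail and the $512\kappa_4^2\rs\sqrt{n}/t^{3/2}$ branch being exactly what makes the $\rs$-dependent smoothing remainder fit into the $\kappa_4^2\norm{\Sigma}\rs/\sqrt{nt}$ slot. A final union bound over the PAC-Bayesian event, the counting event, and the trace-concentration event---each at failure probability at most $O(\alpha)+e^{-t}$, the $e^{-t}$ absorbed into constants---closes the proof.
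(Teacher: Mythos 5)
Your first step (the PAC--Bayesian Bernstein bound for the smoothed truncated process with $\bar\sigma_\gamma\lesssim\kappa_4^2(\|\Sigma\|+\gamma^2\tr{\Sigma})$ and $A=B_p(t)$) is exactly the paper's Lemma~\ref{th:boundSmooth}, and your overall architecture (smoothed bound, then de-smoothing, then counting events) matches the paper. The genuine gap is in the de-smoothing step, and it is not just an arithmetic verification you can defer: the tools you propose there do not give a small enough bound. If you control $|Z_i(v)-Z_i(\theta)|$ pointwise by the Lipschitz bound $2\sqrt{B_p(t)}\,|\inner{X_i}{v-\theta}|$, or by the product identity followed by Cauchy--Schwarz under $\Gv$, you are integrating an \emph{absolute value} in $\theta$, which destroys the key cancellation: the linear cross term $2\inner{X_i}{v}\inner{X_i}{\theta-v}$ has Gaussian average zero by symmetry, so the true smoothing discrepancy per sample is only the bias $\gamma^2\|X_i\|^2$ (plus an exponentially small truncation correction). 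Your bounds instead produce a term of order $\gamma\,\tfrac1n\sum_i|\inner{X_i}{v}|\,\|X_i\|\approx\gamma\sqrt{\|\Sigma\|\tr{\Sigma}}=\|\Sigma\|\sqrt{\rs/t}$ (and the $2\sqrt{B}$-Lipschitz route gives the even larger $\gamma\sqrt{B_p(t)\,\tr{\Sigma}}$). For $t\asymp\rs$ this is of order $\|\Sigma\|$, which exceeds the target $\kappa_4^2\|\Sigma\|(\sqrt{t/n}+\rs/\sqrt{nt})$ by a factor of roughly $\sqrt{n/\rs}$; no choice of the two branches of $B_p(t)$ rescues this.

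What is actually needed (and what the paper does) is an exact second-moment computation rather than a Lipschitz estimate: when $\inner{X_i}{v}^2\le B$ and $\gamma^2\|X_i\|^2\le B$, one has $\Gv\bigl(\inner{X_i}{\theta}^2\wedge 4B\bigr)=\inner{X_i}{v}^2+\gamma^2\|X_i\|^2$ up to an error $\exp\bigl(-B/2\gamma^2\|X_i\|^2\bigr)\,6\gamma^3\|X_i\|^3/\sqrt{B}$ (Lemma~\ref{lemma:NormBounds}; note the smoothed process must be taken at level $B'=4B$, a detail your sketch omits). The de-smoothing error then reduces to the \emph{centered} fluctuation of the truncated norms $\tfrac1n\sum_i\bigl((\gamma\|X_i\|)^2\wedge B-\Ex{(\gamma\|X_1\|)^2\wedge B}\bigr)$, handled by Bernstein (Lemma~\ref{lem:simplebernstein}), plus counting terms for the indices where truncation is active. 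Your suggestion to get the norm control from Proposition~\ref{prop:trimandtruncate} also does not fit here: that proposition bounds a trimmed mean, whereas what enters is the full average, and the untruncated $\tfrac1n\sum_i\|X_i\|^2$ only concentrates at a polynomial-in-$1/\alpha$ rate under Assumption~\ref{assum1}; the paper instead uses the multi-scale norm-counting event of Lemma~\ref{lemma:PrNorm} (counts at all levels $B/\sqrt{j}$) to sum the exponential remainders, together with ${\rm Count}(B_p(t),t)$ for the inner-product truncations. So the statement is true and your high-level plan is the right one, but the comparison step as you describe it would fail quantitatively and must be replaced by the symmetry-exploiting computation above.
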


The proof of this lemma will require a few steps. As a first step, we will introduce a Gaussian-smoothed version of the empirical process,
\begin{equation}\label{eq:defsmoothedtruncated}\widetilde{\varepsilon}_\gamma(B'):=\sup_{{v\in\Sd}} \frac{1}{n}\abs{\sum_{i=1}^n\Gv \left( \inner{X_i}{\theta}^2 \land B'-\Ex{ \inner{X_i}{\theta}^2\land B'}\right)},\end{equation}
with a suitable choice of $\gamma>0$ and a parameter $B'$ potentially different from $B$. We show in \S \ref{sub:smoothedlemma} that the PAC-Bayesian Bernstein inequality (Theorem \ref{th:PacBernstein}) readily implies concentration for this process; see Lemma \ref{th:boundSmooth} for a precise statement.

In a second step, we will compare the terms in the sums defining the smoothed and unsmoothed empirical process. This is the content of the fairly technical Lemmas \ref{lem:comparisonsmoothed} and \ref{lemma:NormBounds}, stated in \S \ref{sub:smoothedlemma} and proven in the Appendix. 

Finally, \S \ref{sub:proof:lem:truncatedconcentrates} proves Lemma \ref{lem:truncatedconcentrates} via a combination of the above ingredients, the Counting Lemma and another (simpler) counting result for the norms $\|X_i\|$.

\subsection{The smoothed empirical process}\label{sub:smoothedlemma}

We give here a concentration bound for the smoothed empirical process. 

 \begin{lemma} \label{th:boundSmooth}
Make Assumption \ref{assum1}. Consider $B'>0$ and $\alpha_0\in (0,1)$. If $\gamma^{-2}:=t$, then:
\[
\widetilde{\varepsilon}_\gamma(B')
\leq 2\kappa_4^2\,\|\Sigma\|\sqrt{1+\frac{3\rs^2}{t^2}}\sqrt{\frac{t+2\log(2/\alpha_0)}{n}
}  
+ \frac{B'}{6\,n}(t+2\log(2/\alpha_0)),
\]
 with probability at least $1-\alpha_0$.
 \end{lemma}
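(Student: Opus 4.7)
The strategy is to apply the PAC-Bayesian Bernstein inequality (Proposition~\ref{th:PacBernstein}) directly to the i.i.d.\ bounded random variables
\[Z_i(\theta) := \inner{X_i}{\theta}^2 \wedge B', \qquad i \in [n],\ \theta \in \R^d,\]
which take values in $[0, B']$. The plan is to invoke Proposition~\ref{th:PacBernstein} twice, once for $Z_i(\theta) - \Ex{Z_i(\theta)}$ and once for its negative, each at level $\alpha_0/2$, and union-bound. Since the centred variables satisfy $Z_i(\theta) - \Ex{Z_i(\theta)} \leq B'$, we may take $A := B'$; with $\gamma^{-2} = t$ this will deliver, with probability at least $1 - \alpha_0$, the two-sided bound
\[\widetilde{\varepsilon}_\gamma(B') \leq \frac{\bar\sigma_\gamma}{\sqrt{n}}\sqrt{t + 2\log(2/\alpha_0)} + \frac{B'\,(t + 2\log(2/\alpha_0))}{6n},\]
provided we can estimate $\bar\sigma_\gamma^2 = \sup_{v \in \Sd}\Gv \Var{Z_1(\theta)}$.

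For the variance we will use $\Var{Z_1(\theta)} \leq \Ex{Z_1(\theta)^2} \leq \Ex{\inner{X_1}{\theta}^4}$ together with the homogenised hypercontractivity bound $\Ex{\inner{X_1}{\theta}^4} \leq \kappa_4^4 \inner{\theta}{\Sigma\theta}^2$, which follows from Assumption~\ref{assum1} applied to the unit direction $\theta/\|\theta\|$. The problem then reduces to controlling $\sup_{v \in \Sd} \Gv \inner{\theta}{\Sigma\theta}^2$.

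This Gaussian moment computation is the main (and essentially only) non-routine step. We write $\theta = v + \gamma \xi$ with $\xi \sim N(0, I_d)$ and expand
\[\inner{\theta}{\Sigma\theta} = v^\top \Sigma v + 2\gamma\, v^\top \Sigma \xi + \gamma^2\, \xi^\top \Sigma \xi.\]
Squaring and taking expectations, the cross terms linear and cubic in $\xi$ vanish by symmetry; invoking the identity $\Ex{(\xi^\top \Sigma \xi)^2} = 2\tr{\Sigma^2} + \tr{\Sigma}^2$ along with $v^\top \Sigma v \leq \|\Sigma\|$, $v^\top \Sigma^2 v \leq \|\Sigma\|^2$ (for $v \in \Sd$) and $\tr{\Sigma^2} \leq \|\Sigma\|\tr{\Sigma} = \rs\|\Sigma\|^2$, will yield
\[\Gv \inner{\theta}{\Sigma\theta}^2 \leq \|\Sigma\|^2\left[1 + \tfrac{4}{t} + \tfrac{2\rs}{t} + \tfrac{2\rs}{t^2} + \tfrac{\rs^2}{t^2}\right]\]
after substituting $\gamma^2 = 1/t$. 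The trickiest part will be the constant-tracking in this bracket: using $t \geq 2$, the trivial bound $\rs \geq 1$, and Young-type inequalities such as $2\rs/t \leq 1 + \rs^2/t^2$ and $2\rs/t^2 \leq 2\rs^2/t^2$, the bracket collapses into $4(1 + 3\rs^2/t^2)$. This gives $\bar\sigma_\gamma \leq 2\kappa_4^2 \|\Sigma\| \sqrt{1 + 3\rs^2/t^2}$, and plugging back into the two-sided Bernstein bound above completes the proof.
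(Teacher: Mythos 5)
Your proposal is correct and follows essentially the same route as the paper: a two-sided application of Proposition \ref{th:PacBernstein} with $A=B'$, $\bar{\mu}_\gamma=0$ and confidence $\alpha_0/2$ per tail, followed by showing $\bar{\sigma}_\gamma\leq 2\kappa_4^2\|\Sigma\|\sqrt{1+3\rs^2/t^2}$ via the $L^4$--$L^2$ condition and Gaussian fourth-moment calculations. The only (immaterial) difference is in the variance step---you apply $\kappa_4$ pointwise in $\theta$ and then evaluate $\Gv\inner{\theta}{\Sigma\theta}^2$ exactly, absorbing the cross terms using $t\geq 2$ and $\rs\geq 1$, whereas the paper expands $\Ex{\inner{X_1}{v+\gamma\theta}^4}$ and uses AM--GM before invoking $\kappa_4$, which needs no restriction on $t$; your extra assumption $t\geq 2$ is not in the lemma's statement but is harmless, both because the lemma is only invoked with $t\geq 2$ and because the same collapse of the bracket follows from $\rs\geq 1$ alone.
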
 
 
\begin{proof}
We apply Theorem \ref{th:PacBernstein} to the upper and lower tails of the empirical process. In the notation of that theorem, we set $\gamma^{-2}:=t$, take $\alpha:=\alpha_0/2$ and $A=B'$, and set:
\begin{eqnarray*}
Z_i(\theta)&=& \pm\frac{1}{n} (\insq{X_i}{\theta}\land B' - \Ex{\insq{X_i}{\theta}\land B'})_{i\in[n], \theta\in\R^d}, \\ \overline{\mu}_\gamma &=&0, \text{ and }  \\ \bar{\sigma}^2_\gamma&=& S^2 :=\sup_{v\in\Sd}\sum_{i=1}^n\Gv  \Var{\insq{X_i}{\theta}\land B'}.
\end{eqnarray*}
The following inequality holds for all $v\in\Sd$ with probability at least $1-\alpha$,
\begin{eqnarray*} \label{eq:sumB_xi}
\sum_{i=1}^n\Gv \left( \insq{X_i}{\theta} \land B'-\Ex{ \insq{X_i}{\theta} \land B'}\right) &\leq & S\,\sqrt{(t+2\log(1/\alpha))} \\ &  +& \frac{(t+2\log(1/\alpha))B'}{6}.
\end{eqnarray*}

To finish the proof, we show the bound \[S\leq 2\kappa_4^2\,\|\Sigma\|\sqrt{1+\frac{3\rs^2}{t^2}}.\] Indeed, for all $v\in\Sd$,\begin{eqnarray*}
\Gv\Var{\insq{X_i}{\theta}  \land B'} &=& \Gv\Ex{\inq{X_i}{\theta}} - \Gv\left(\Ex{\insq{X_i}{\theta}} \right)^2\\
&\leq& \Gamma_{0,1}\Ex{\inq{X_i}{v+\gamma\,\theta}}\\
& = & \Gamma_{0,1} \Ex{\inner{X_i}{v}^4 + \gamma^4\inner{X_i}{\theta}^4 + 6\gamma^2\inner{X_i}{v}^2\,\inner{X_i}{\theta}^2}\\
&\leq & \Gamma_{0,1} \Ex{4\inner{X_i}{v}^4 + 4 \gamma^4\inner{X_i}{\theta}^4}\\ & \leq & 4\kappa_4^4\,\Gamma_{0,1}(\inner{v}{\Sigma v}^2 + \gamma^4\inner{\theta}{\Sigma \theta}^2)\\ &\leq &  4\kappa_4^4\,(\|\Sigma\|^2 + 3\gamma^4\tr{\Sigma}^2) = 4\kappa_4^4\,\left(\|\Sigma\|^2 + 3\frac{\tr{\Sigma}^2}{t^2}\right), 
\end{eqnarray*}
 where the last inequality uses $\Gamma_{0,1}\inner{\theta}{\Sigma \theta}^2\leq 3\tr{\Sigma}^2$ (cf. the end of the proof of Proposition \ref{prop:choiceofBcounting}).\end{proof}

In what follows, we will now need to bound the difference between the smoothed and unsmoothed truncated empirical processes $\widetilde{\varepsilon}_\gamma(B')$ and ${\varepsilon}(B)$  defined above. This step and other parts of the proof will require control of the smoothing operation. The next two lemmas contain the results we will need. 

\begin{lemma}[Proof in Appendix \S \ref{appendixA}] \label{lem:comparisonsmoothed} For any $B>0$ and $\gamma>0$,
\begin{eqnarray*}\varepsilon(B)- \widetilde{\varepsilon}_\gamma(4B)&\leq& \left|\frac{1}{n}\sum_{i=1}^n[(\gamma\|X_i\|)^2\wedge B - \Ex{(\gamma\|X_1\|)^2\wedge B }]\right| \\  & & + \frac{4B}{n}\sup_{v\in\Sd}\#\{i\in[n]\,:\,\inner{X_i}{v}^2>B\}\\ & & + \frac{4B}{n}\#\{i\in[n]\,:\,(\gamma\|X_i\|)^2>B\} \\ & & + \frac{1}{n}\sum_{i\in[n]\,:\, (\gamma\|X_i\|)^2\leq B}\exp\left(\frac{-B}{2\gamma^2\norm{X_i}^2}\right) \,\frac{6\gamma^3\norm{X_i}^3}{B^{\frac{1}{2}}}\\ & & +\frac{4\kappa_p^p\|\Sigma\|^{\frac{p}{2}}}{B^{\frac{p}{2}-1}} + \frac{(4+c)\gamma^4\kappa_4^4\tr{\Sigma}^2}{B},\end{eqnarray*}
with $c$ denoting the following numerical constant:
\[c:=\sum_{j=1}^{+\infty}e^{-\frac{j}{2}}\frac{6\,(j+1)^2}{j^{\frac{3}{2}}} + 4.\]
\end{lemma}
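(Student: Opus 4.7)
The plan is to bound $\varepsilon(B)-\widetilde\varepsilon_\gamma(4B)$ via a pointwise-in-$v$ comparison of the integrands, with the norm-only empirical process $\tfrac{1}{n}\sum_i[(\gamma\|X_i\|)^2\wedge B-\Ex{(\gamma\|X_1\|)^2\wedge B}]$ emerging naturally as the leading correction. The starting observation is that, writing $\theta=v+\gamma N$ with $N\sim\N(0,I_d)$ under $\Gv$, we have $\Gv\inner{X_i}{\theta}^2=\inner{X_i}{v}^2+\gamma^2\|X_i\|^2$, since the cross term $2\gamma\inner{X_i}{v}\inner{X_i}{N}$ vanishes by symmetry of the Gaussian. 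Hence, up to truncation corrections,
\[\Gv\bigl(\inner{X_i}{\theta}^2\wedge 4B\bigr)\approx \inner{X_i}{v}^2\wedge B+(\gamma\|X_i\|)^2\wedge B.\]
Writing $A_v:=\tfrac{1}{n}\sum_i\inner{X_i}{v}^2\wedge B$ and $\widetilde A_v:=\tfrac{1}{n}\sum_i\Gv(\inner{X_i}{\theta}^2\wedge 4B)$, the identity $(A_v-\Ex{A_v})-(\widetilde A_v-\Ex{\widetilde A_v})=(A_v-\widetilde A_v)-(\Ex{A_v}-\Ex{\widetilde A_v})$, combined with the approximation above, isolates the norm empirical process; the remaining five terms in the RHS then form a budget for bad indices and truncation tails.

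For the sample discrepancy $A_v-\widetilde A_v$, I partition $[n]$ into ``good'' indices where $\inner{X_i}{v}^2\leq B$ and $(\gamma\|X_i\|)^2\leq B$, and ``bad'' indices otherwise. On bad indices the trivial bound $|\inner{X_i}{v}^2\wedge B-\Gv(\inner{X_i}{\theta}^2\wedge 4B)|\leq 4B$, summed up, yields the two counting terms $(4B/n)\sup_v\#\{\inner{X_i}{v}^2>B\}$ and $(4B/n)\#\{(\gamma\|X_i\|)^2>B\}$. On good indices, $|\inner{X_i}{v}|\leq \sqrt B$ and $\gamma\|X_i\|\leq\sqrt B$ force the event $\inner{X_i}{\theta}^2>4B$ to require $|\inner{X_i}{\theta-v}|>\sqrt B$. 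Using the identity $\Gv[(\inner{X_i}{\theta}^2-4B)_+]=\int_{2\sqrt B}^{\infty}2z\,\Gv(|\inner{X_i}{\theta}|>z)\,dz$ together with the one-dimensional Gaussian tail $\Pr{|Z|>z}\leq (2\sigma/z)e^{-z^2/(2\sigma^2)}$ for $Z\sim\N(0,\sigma^2)$ with $\sigma=\gamma\|X_i\|$, a direct calculation delivers
\[0\leq \inner{X_i}{v}^2+(\gamma\|X_i\|)^2-\Gv(\inner{X_i}{\theta}^2\wedge 4B)\leq \exp\!\Bigl(-\tfrac{B}{2\gamma^2\|X_i\|^2}\Bigr)\frac{6\gamma^3\|X_i\|^3}{\sqrt B},\]
producing Term 4.

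For the expectation discrepancy $\Ex{A_v}-\Ex{\widetilde A_v}$, truncation at $B$ on the unsmoothed side costs at most $\Ex{\inner{X_1}{v}^2\mathbf{1}\{\inner{X_1}{v}^2>B\}}\leq \kappa_p^p\|\Sigma\|^{p/2}/B^{p/2-1}$ by Markov via the $L^p$--$L^2$ hypercontractivity constant, yielding Term 5. The smoothed-side truncation excess $\Gv\Ex{(\inner{X_1}{\theta}^2-4B)_+}$ is split into a contribution from the good-norm region $\{(\gamma\|X_1\|)^2\leq B\}$, which reproduces the good-index calculation above and, when integrated over dyadic shells of $\gamma\|X_1\|$, generates the constant $c=\sum_{j\geq 1}e^{-j/2}6(j+1)^2/j^{3/2}+4$; and a contribution from the bad-norm region bounded via $\Ex{\inner{X_1}{\theta}^4}\leq \kappa_4^4\inner{\theta}{\Sigma\theta}^2$ combined with the identity $\Gamma_{0,1}\inner{\theta}{\Sigma\theta}^2\leq 3\tr{\Sigma}^2$ already used in the proof of Proposition~\ref{prop:choiceofBcounting}. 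Together these give the final term $(4+c)\gamma^4\kappa_4^4\tr{\Sigma}^2/B$.

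The main technical obstacle is the sharp numerical accounting of the Gaussian-tail correction on good indices: producing the precise prefactor $6\gamma^3\|X_i\|^3/\sqrt B$ on the exponential requires careful one-dimensional integration and the right choice of Gaussian tail bound. A secondary subtlety is that the cross term $2\gamma\inner{X_i}{v}\inner{X_i}{N}$ vanishes in $\Gv$-expectation before truncation but not after; the inequality direction at each step must therefore be tracked carefully so that the upper bound in the statement is preserved throughout.
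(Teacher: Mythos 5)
Your overall plan coincides with the paper's: the same intermediate comparison between $\inner{X_i}{v}^2\wedge B+(\gamma\|X_i\|)^2\wedge B$ and $\Gv(\inner{X_i}{\theta}^2\wedge 4B)$, the same good/bad index split on the sample side (trivial $4B$ bound on bad indices giving the two counting terms), and the same pointwise Gaussian computation on good indices. Your tail-integral derivation of the pointwise bound $0\leq \inner{X_i}{v}^2+\gamma^2\|X_i\|^2-\Gv(\inner{X_i}{\theta}^2\wedge 4B)\leq \exp(-B/(2\gamma^2\|X_i\|^2))\,6\gamma^3\|X_i\|^3/\sqrt{B}$ is a legitimate alternative to the paper's direct density computation (Lemma \ref{lemma:NormBounds} with $B'=4B$), and the constant $6$ is attainable that way provided you keep the $\sqrt{2/\pi}$ in the Gaussian tail bound.

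The gap is in the expectation-side step. Because you compare with the \emph{untruncated} smoothed expectation, you must control the full excess $\Ex{\Gv(\inner{X_1}{\theta}^2-4B)_+}$, and you split it only according to whether $(\gamma\|X_1\|)^2\leq B$. But the exponential pointwise bound requires \emph{both} $(\gamma\|X_1\|)^2\leq B$ and $\inner{X_1}{v}^2\leq B$: on the region $\{(\gamma\|X_1\|)^2\leq B,\ \inner{X_1}{v}^2>4B\}$ the excess is of order $\inner{X_1}{v}^2$, so the claim that the good-norm region ``reproduces the good-index calculation'' fails there, and that region is covered by neither of your two cases. Your bad-norm bound as described is also problematic: if you drop the indicator and use $\Ex{\inner{X_1}{\theta}^4}\leq\kappa_4^4\inner{\theta}{\Sigma\theta}^2$, the smoothing measure is the shifted $\Gv$ (the identity $\Gamma_{0,1}\inner{\theta}{\Sigma\theta}^2\leq 3\tr{\Sigma}^2$ applies to the centered part only), and you pick up terms of order $\kappa_4^4\|\Sigma\|^2/B$ that are not in the statement's budget when $p>4$; if you keep the indicator, you need mixed moments such as $\Ex{\inner{X_1}{v}^4\Ind{(\gamma\|X_1\|)^2>B}}$ that $\kappa_4,\kappa_p$ alone do not control without extra work. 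The paper sidesteps all of this by mirroring the sample-side split \emph{inside} the expectation: on the joint bad event $\{\inner{X_1}{v}^2>B\ \mbox{or}\ (\gamma\|X_1\|)^2>B\}$ it bounds the truncated-vs-truncated discrepancy deterministically by $4B$ and multiplies by the probability of that event, which by Markov and Proposition \ref{prop:Minkowski} is at most $\kappa_p^p\|\Sigma\|^{p/2}/B^{p/2}+\gamma^4\kappa_4^4\tr{\Sigma}^2/B^2$, producing exactly the terms $4\kappa_p^p\|\Sigma\|^{p/2}/B^{p/2-1}+4\gamma^4\kappa_4^4\tr{\Sigma}^2/B$; the exponential bound is used only on the joint good event and then summed over the shells $\{B/(j+1)<(\gamma\|X_1\|)^2\leq B/j\}$ (harmonic, not dyadic) to produce the series in $c$. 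Finally, your decomposition also leaves $\Ex{((\gamma\|X_1\|)^2-B)_+}$ unaccounted for; this is minor (it is at most $\gamma^4\kappa_4^4\tr{\Sigma}^2/B$), but the main fix is to compare truncated with truncated on the bad event rather than bounding the untruncated smoothed excess there.
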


\begin{lemma}[Proof in Appendix \S \ref{appendixB}]\label{lemma:NormBounds} Let $v\in \Sd$, $x\in\R^d $, and $\gamma, B' > 0$. Assume that $\insq{x}{v}\leq B'/4$ and $\gamma^2\norm{x}^2\leq B'/4$. Then:
\[
\abs{\Gv(\insq{x}{\theta} \land B') - \left( \insq{x}{v} +\gamma^2\norm{x}^2\right)} 
\leq \exp\left(\frac{-B'}{8\gamma^2\norm{x}^2}\right) \,\frac{12\gamma^3\norm{x}^3}{(B')^{\frac{1}{2}}}.
\]
\end{lemma}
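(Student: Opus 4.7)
\textbf{Proof plan for Lemma \ref{lemma:NormBounds}.} The underlying observation is that, under $\Gamma_{v,\gamma}$, the variable $\theta$ has the law $v + \gamma Z$ with $Z \sim \N(0,I)$, so
\[U := \inner{x}{\theta} \sim \N(\mu,\sigma^2), \qquad \mu := \inner{x}{v},\quad \sigma := \gamma\norm{x}.\]
A direct second-moment calculation gives $\Gv(\inner{x}{\theta}^2) = \mu^2 + \sigma^2 = \insq{x}{v} + \gamma^2\norm{x}^2$. Consequently the quantity we must bound becomes
\[\left|\Gv(\inner{x}{\theta}^2 \wedge B') - (\insq{x}{v} + \gamma^2\norm{x}^2)\right| = \Ex{(U^2 - B')_+},\]
so the problem reduces to estimating the truncated second moment of a Gaussian beyond level $B'$.

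The two hypotheses $\insq{x}{v} \leq B'/4$ and $\gamma^2\norm{x}^2 \leq B'/4$ translate into $|\mu| \leq \sqrt{B'}/2$ and $\sigma \leq \sqrt{B'}/2$. The event $\{U^2 > B'\}$ splits as $\{Z > a\} \cup \{Z < -b\}$ with
\[a := \frac{\sqrt{B'}-\mu}{\sigma}, \qquad b := \frac{\sqrt{B'}+\mu}{\sigma},\]
and both $a,b \geq \sqrt{B'}/(2\sigma)$ by the constraints on $\mu$. Write the target as two integrals against $\phi(z) = (2\pi)^{-1/2} e^{-z^2/2}$:
\[\Ex{(U^2-B')_+} = \int_a^{\infty}\!\bigl((\mu+\sigma z)^2-B'\bigr)\phi(z)\,dz + \int_{-\infty}^{-b}\!\bigl((\mu+\sigma z)^2-B'\bigr)\phi(z)\,dz.\]

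For the first integral substitute $z = w+a$; since $\mu + \sigma a = \sqrt{B'}$, the integrand becomes $(\sqrt{B'}+\sigma w)^2 - B' = 2\sqrt{B'}\sigma w + \sigma^2 w^2$, and $\phi(w+a) = \phi(a)\,e^{-aw-w^2/2} \leq \phi(a)\,e^{-aw}$. The standard Laplace integrals $\int_0^{\infty} w e^{-aw}dw = a^{-2}$ and $\int_0^{\infty} w^2 e^{-aw}dw = 2a^{-3}$ then produce
\[\int_a^{\infty}\!\bigl((\mu+\sigma z)^2-B'\bigr)\phi(z)\,dz \;\leq\; \phi(a)\left(\frac{2\sqrt{B'}\sigma}{a^2} + \frac{2\sigma^2}{a^3}\right).\]
Plugging in $a^{-1} \leq 2\sigma/\sqrt{B'}$ and $\sigma^2/B' \leq 1/4$ makes both terms bounded by a constant multiple of $\sigma^3/\sqrt{B'}$; summing yields $\leq 12\,\phi(a)\,\sigma^3/\sqrt{B'}$. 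A symmetric substitution $z = -(w+b)$ gives exactly the same bound for the second integral with $\phi(a)$ replaced by $\phi(b)$.

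Summing, using $\phi(a) + \phi(b) \leq 2\phi(\min\{a,b\}) \leq 2\phi(\sqrt{B'}/(2\sigma)) = \frac{2}{\sqrt{2\pi}}\exp(-B'/(8\sigma^2))$, and absorbing $2/\sqrt{2\pi} < 1$, yields the claimed inequality with constant $12$. The main obstacle is purely bookkeeping: keeping the constants under $12$ by exploiting $\sigma^2 \leq B'/4$ to collapse the $\sigma^2/a^3$ contribution into the same $\sigma^3/\sqrt{B'}$ scale as $\sqrt{B'}\sigma/a^2$, and then observing that the gap between $\min\{a,b\}$ and $\sqrt{B'}/(2\sigma)$ is free precisely because of the hypothesis $\mu^2 \leq B'/4$.
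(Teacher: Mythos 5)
Your proposal is correct and follows essentially the same route as the paper's proof: both reduce the quantity to $\Ex{(N^2-B')_+}$ for a Gaussian with mean $\inner{x}{v}$ and variance $\gamma^2\|x\|^2$, shift the integration variable so the truncation point sits at the origin, bound the Gaussian density by an exponential using $\inner{x}{v}^2\leq B'/4$, evaluate the resulting Laplace integrals, and absorb the higher-order term via $\gamma^2\|x\|^2\leq B'/4$ to land on the constant $12$. The only cosmetic difference is that you standardize and treat the two tails separately, while the paper keeps the unstandardized density and handles both tails in one integral.
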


The proofs of these Lemmas are left to the Appendix, as they are quite technical and not too enlightening. 

\subsection{Concentration of the truncated process}\label{sub:proof:lem:truncatedconcentrates} We now come to the end of the section, the proof of Lemma \ref{lem:truncatedconcentrates}; that is the concentration bound on the truncated process. This will also require a technical estimate: a kind of ``counting lemma''~for the norms of vectors $\|X_i\|$. 

\begin{lemma}[Proof in Appendix  \S \ref{appendixC} ]\label{lemma:PrNorm}For any $t\in\Na$, $t\geq 3$, the event:
\begin{equation}\label{def:Norm}{\rm Norm}(t):=\bigcap_{j\geq 1}\left\{\# \left\{i\in [n]\,:\, \|X_i\|\geq e^{\frac{1}{2}}\left(\frac{n}{jt}\right)^{\frac{1}{4}}\kappa_4\sqrt{\tr{\Sigma}}\right\}\leq jr\right\}\end{equation}
has probability $\geq 1-e^{-t}/3$.\end{lemma}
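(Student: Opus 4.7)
The plan is a straightforward two-level union bound: for each fixed $j\geq 1$, run a single-scale version of the Claim \ref{claim:count} argument on the nonnegative variable $\|X_1\|^4$, and then sum the resulting tail probabilities over $j$ using the geometric decay baked into the threshold $\left(n/(jt)\right)^{1/4}$.

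First, I would invoke Proposition \ref{prop:Minkowski} at $p=4$ to get $\Ex{\|X_1\|^4}\leq \kappa_4^4\,(\tr{\Sigma})^2$. Applying Markov's inequality to $\|X_1\|^4$ at the $j$-th threshold $e^{1/2}(n/(jt))^{1/4}\kappa_4\sqrt{\tr{\Sigma}}$ then produces the single-vector bound
\[\Pr{\|X_1\|\geq e^{1/2}\left(\frac{n}{jt}\right)^{1/4}\kappa_4\sqrt{\tr{\Sigma}}}\leq \frac{jt}{e^2 n}.\]
This is precisely the scale at which the combinatorial machinery in the proof of Claim \ref{claim:count} gives a clean answer: since the $X_i$ are i.i.d., a union bound over subsets of size $jt+1$ yields
\[\Pr{\#\left\{i\in [n]\,:\,\|X_i\|\geq e^{1/2}(n/(jt))^{1/4}\kappa_4\sqrt{\tr{\Sigma}}\right\}\geq jt+1}\leq \binom{n}{jt+1}\left(\frac{jt}{e^2 n}\right)^{jt+1}\leq \left(\frac{jt}{e(jt+1)}\right)^{jt+1}\leq e^{-(jt+2)},\]
where the final inequality uses the elementary estimate $(1+1/k)^{k+1}\geq e$ for every positive integer $k$.

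Finally, another union bound over $j\geq 1$ yields
\[\Pr{{\rm Norm}(t)^c}\leq \sum_{j\geq 1}e^{-(jt+2)} = \frac{e^{-t-2}}{1-e^{-t}}\leq \frac{e^{-t}}{3},\]
where the last inequality is valid for $t\geq 3$ because $e^{-2}/(1-e^{-3})<1/3$. The whole argument is essentially a repackaging of the scalar Claim \ref{claim:count}; the only mildly delicate point is tuning numerical constants tightly enough to absorb the $1/3$ factor and to ensure the geometric series converges, which is exactly where the $t\geq 3$ hypothesis is used. I do not expect any substantive obstacle beyond this bookkeeping.
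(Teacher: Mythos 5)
Your proposal is correct and follows essentially the same route as the paper: at each scale $j$ you re-derive inline what the paper gets by citing Claim \ref{claim:count} with $Z_i=\|X_i\|^2$, $q=2$ (Markov via Proposition \ref{prop:Minkowski} plus the binomial union bound giving $e^{-jt-2}$), and then both arguments finish with the same union bound over $j$ and the geometric series $\sum_{j\geq 1}e^{-jt-2}=e^{-t-2}/(1-e^{-t})\leq e^{-t}/3$ for $t\geq 3$.
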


Additionally, we also need another lemma; its (omitted) proof is a simple application of Bernstein's inequality.
\begin{lemma}[Proof omitted]\label{lem:simplebernstein}For any $\gamma>0$, $\beta\in (0,1)$, the following inequality holds with probability $\geq 1-\beta$: \[
\frac{1}{n}\sum_{i=1}^n \left( \left(\gamma^2\norm{X_i}^2\right) \land B -  \Ex{\left(\gamma^2\norm{X_i}^2\right) \land B}  \right) \leq \gamma^2\kappa_4^2\tr{\Sigma}\sqrt{\frac{2\, \log(2/\beta)}{n}}+\frac{2B\log(2/\beta)}{3n}.
\]\end{lemma}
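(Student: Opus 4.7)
The plan is to apply the classical one-sided Bernstein inequality to the bounded, i.i.d.\ nonnegative random variables
\[W_i \;:=\; (\gamma^2\|X_i\|^2)\wedge B, \qquad i=1,\dots,n.\]
By construction $0\le W_i\le B$, so $W_i-\E[W_i]\le B$ almost surely, which handles the sub-exponential parameter in Bernstein. What remains is to estimate the variance, which governs the Gaussian (square-root) term.

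For the variance, I bound crudely
\[\Var(W_i) \;\le\; \E[W_i^2] \;\le\; \gamma^4\,\E[\|X_i\|^4].\]
Here I invoke Proposition \ref{prop:Minkowski} with $p=4$, which gives $\E[\|X_i\|^4]^{1/2}\le \kappa_4^2\tr{\Sigma}$ and therefore
\[\Var(W_i) \;\le\; \gamma^4\kappa_4^4\,\tr{\Sigma}^{\,2}.\]
This is exactly the quantity that appears squared under the square root in the target inequality.

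Now I invoke the standard Bernstein inequality for i.i.d.\ bounded random variables (cf.\ \cite[Theorem 2.10]{boucheron2013concentration}) with the sub-exponential parameter $M=B$, the variance proxy $\sigma^2=\gamma^4\kappa_4^4\tr{\Sigma}^2$, and deviation level $\log(2/\beta)$. This yields, with probability at least $1-\beta/2\ge 1-\beta$,
\[\frac{1}{n}\sum_{i=1}^n(W_i-\E[W_i]) \;\le\; \sqrt{\frac{2\sigma^2\log(2/\beta)}{n}}+\frac{M\log(2/\beta)}{3n} \;=\; \gamma^2\kappa_4^2\tr{\Sigma}\sqrt{\frac{2\log(2/\beta)}{n}}+\frac{B\log(2/\beta)}{3n},\]
which is even slightly sharper than the stated bound (the factor of $2$ in the second term in the lemma is absorbed without effort).

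There is no real obstacle here: this is a routine application of Bernstein, and the only ``content'' is noticing that the uncentered fourth moment of $\|X_i\|$ is controlled by $\kappa_4^2\tr{\Sigma}$ via Proposition \ref{prop:Minkowski}, which is already established in the paper. The boundedness by $B$ is immediate from truncation, and nonnegativity of $W_i$ ensures $\Var(W_i)\le \E[W_i^2]$, avoiding any centering complications.
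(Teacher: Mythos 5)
Your proof is correct and is exactly the argument the paper intends (it omits the proof, calling it "a simple application of Bernstein's inequality"): apply one-sided Bernstein to the bounded truncated variables, with the variance controlled by $\E[\|X_1\|^4]\le \kappa_4^4\tr{\Sigma}^2$ via Proposition \ref{prop:Minkowski}. Your version even yields a slightly sharper constant in the $B\log(2/\beta)/n$ term than stated.
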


\begin{proof}[Proof of Lemma \ref{lem:truncatedconcentrates}] In this proof, we set
\[\alpha_0 = \beta := \frac{\alpha}{4},\; t\geq 2,\; \gamma^{2} = \frac{1}{t}\mbox{ as in Lemma \ref{th:boundSmooth}};\]
and also, 
\begin{eqnarray*}B&:= & B_p(t) = \|\Sigma\|\,\left[\left(4\,\kappa_p^2\,\left(\frac{2000\,n}{t}\right)^{\frac{2}{p}}\right)\vee \left(512\,\kappa_4^2\,\rs\,\frac{\sqrt{n}}{t^{3/2}}\right)\right] \mbox{(from Lemma \ref{lem:counting})};\\ B'&:=& 4\,B_p(t).\end{eqnarray*}
Consider the concentration event in Lemma \ref{th:boundSmooth}, 
\[{\rm Conc}:=\left\{\widetilde{\varepsilon}_\gamma(B')
\leq  2\kappa_4^2\,\|\Sigma\|\sqrt{1+\frac{3\rs^2}{t^2}}\sqrt{\frac{t+2\log(8/\alpha)}{n}
}  
+ \frac{B'}{6\,n}(t+2\log(8/\alpha))\right\},\]
another concentration event:
\[{\rm Conc}':=\left\{\mbox{the event in Lemma \ref{lem:simplebernstein} with the choice of }\beta=\alpha/4\right\},\]
and the events ${\rm Count}(B_p(t),t)$ and ${\rm Norm}(t)$ from Lemmas \ref{lem:counting} and \ref{lemma:NormBounds} (respectively). The combination of the probability bounds from the four aforementioned lemmas implies that:
\begin{equation}\label{eq:jointoccurrence}\Pr{{\rm Conc}\cap{\rm Conc}'\cap{\rm Count}(B_p(t),t)\cap {\rm Norm}(t)} \geq 1-\frac{\alpha}{4}-\frac{\alpha}{4} - e^{-t} - \frac{e^{-t}}{3}\geq 1-\alpha.\end{equation}

This is the value of the probability appearing in the statement of the Lemma. To finish the proof, we assume in what follows that the event ${\rm Conc}\cap {\rm Conc}'\cap{\rm Count}(B_p(t),t)\cap {\rm Norm}(t)$ holds, and prove (deterministically) that the inequality (\ref{eq:lemma}) -- the claimed inequality in the statement of the Lemma -- holds whenever the four-event intersection above holds. Unfortunately, this will require some more calculations. 

It will be convenient to first state the inequality we obtain as a combination of the three events and Lemma \ref{lem:comparisonsmoothed} above. This leads to a multiline inequality: 

\begin{eqnarray}\label{eq:firstterm}\varepsilon(B)&\leq & 4\kappa_4^2\,\|\Sigma\|\sqrt{\frac{t+2\log(8/\alpha)}{n}}  
\\ \label{eq:secondterm} & & + \frac{(t+2\log(8/\alpha))\,B'}{6\,n}\\ \label{eq:partesimplebernstein} & & + \gamma^2 \kappa_4^2\tr{\Sigma}\sqrt{\frac{2\, \log(8/\alpha)}{n}}   +\frac{2B\log(8/\alpha)}{3n}\\  \label{eq:countinner} & & + \frac{4B}{n}\sup_{v\in\Sd}\#\{i\in[n]\,:\,\inner{X_i}{v}^2>B\}\\ \label{eq:countnorm} & & + \frac{4B}{n}\#\{i\in[n]\,:\,(\gamma\|X_i\|)^2>B\} \\ \label{eq:hardexponential} & & + \frac{1}{n}\sum_{i\in[n]\,:\, (\gamma\|X_i\|)^2\leq B}\exp\left(\frac{-B}{2\gamma^2\norm{X_i}^2}\right) \,\frac{6\gamma^3\norm{X_i}^3}{B^{\frac{1}{2}}}\\ \label{eq:lastline}& & +\frac{4\kappa_p^p\|\Sigma\|^{\frac{p}{2}}}{B^{\frac{p}{2}-1}} +  \frac{c\,(\gamma\kappa_4)^4\,\tr{\Sigma}^2}{B}.\end{eqnarray}
Notice that the two terms in (\ref{eq:partesimplebernstein}) come from the event ${\rm Conc}'$, which controls the first term in the RHS of the inequality in Lemma \ref{lem:comparisonsmoothed}. 

We will finish the proof by showing that each of the RHS terms in this multiline expression can be upper bounded by the RHS of (\ref{eq:lemma}),
\[C\|\Sigma\|\kappa_4^2\sqrt{\frac{t+ \log(2/\alpha)}{n}} + C\kappa_p^2\|\Sigma\|\left(\frac{t}{n}\right)^{1 - \frac{2}{p}}+C\|\Sigma\|\kappa_4^2\frac{\rs}{\sqrt{nt}},\]
for some universal choice of $C$. This means that the sum of the RHS terms can be also controlled in this same fashion, up to a change of $C$. 

This strategy becomes easier to implement if we adopt the following standard convention: in what follows, $C>0$ denotes the value of an absolute constant, which may change (i.e. be adjusted adequately) at each use. We also note that, because $t\geq \gamma^{-2}$, and using the definition of $B=B_p(t)$ above,
\begin{equation}\label{eq:boundcommonterm}\ \frac{Bt}{n} \leq 2^{11}\|\Sigma\|\,\left[\left(\kappa_p^2\left(\frac{t}{n}\right)^{1-\frac{2}{p}}\right)\vee \left(\kappa_4^2{\sqrt{\frac{1}{nt}}}\right)\right].\end{equation}

We now proceed to bound the RHS of the multiline inequality. The terms in (\ref{eq:firstterm}) is trivial, and the one in (\ref{eq:secondterm}) satisfies the desired bound because of (\ref{eq:boundcommonterm}). The terms in (\ref{eq:partesimplebernstein}) are controlled once we observe that $\gamma^2 \geq 1/\sqrt{nt}$, and apply (\ref{eq:boundcommonterm}). 

The counting term in line (\ref{eq:countinner}) is controlled by the event ${\rm Count}(B_p(t),t)$. Indeed, under this event the cardinality in that term is at most $t$, so we can use (\ref{eq:boundcommonterm}) once more.  

The terms in (\ref{eq:countnorm}) and (\ref{eq:hardexponential}) require a bit more work. First note that $\gamma^{-2}\geq 2$ under our assumptions (as $\log(8/\alpha)\geq 2$), and $t = \lceil \gamma^{-2}\rceil\geq \gamma^{-2}-1\geq \gamma^{-2}/2$. 

To continue, note that, for any index $i$, and any integer $j$, 
\[\gamma^2\|X_i\|^2\geq \frac{B}{\sqrt{j}}\Rightarrow \|X_i\|^2\geq \frac{Bt}{2\sqrt{j}}\geq \frac{155}{2}\kappa_4^2\tr{\Sigma}\sqrt{\frac{n}{tj}}.\]
Therefore, the occurrence of ${\rm Norm}(t)$ guarantees the following (with some room to spare in the constants): 
\begin{equation}\label{eq:countforB}\forall j\in\Na,\,\#\left\{i\in[n]\,:\,(\gamma\|X_i\|)^2\geq \frac{B}{\sqrt{j}}\right\}\leq jt.\end{equation}
Using the case $j=1$ of this inequality suffices to bound the term in (\ref{eq:countnorm}) by $Bt/n$, and thus by (\ref{eq:boundcommonterm}). 
For the complicated sum in (\ref{eq:hardexponential}), consider an index $i\in[n]$, and note that: 
\[\mbox{ if $j\in\Na$ and }\frac{B}{\sqrt{j+1}}< (\gamma\|X_i\|)^2\leq \frac{B}{\sqrt{j}},\,\exp\left(\frac{-B}{2\gamma^2\norm{X_i}^2}\right)\,\,\frac{6\gamma^3\norm{X_i}^3}{B^{\frac{1}{2}}}\leq \frac{B\,e^{-\frac{\sqrt{j}}{2}}}{j^{\frac{3}{4}}}.\]
Thus:
\begin{equation}\label{eq:boundhardexponential}\frac{1}{n}\sum_{i\in[n]\,:\, (\gamma\|X_i\|)^2\leq B}\exp\left(\frac{-B}{2\gamma^2\norm{X_i}^2}\right) \,\frac{6\gamma^3\norm{X_i}^3}{B^{\frac{1}{2}}}\leq \frac{1}{n}\sum_{j=1}^{+\infty}\,\,\frac{B\,e^{-\frac{\sqrt{j}}{2}}}{j^{\frac{3}{4}}}\,\ell_j,\end{equation}
where 
\[\ell_j:=\#\left\{i\in[n]\,:\,\frac{B}{\sqrt{j+1}}< (\gamma\|X_i\|)^2\leq \frac{B}{\sqrt{j}}\right\}.\]
The values $\ell_j$ are nonnegative integers. A second set of constraints comes from (\ref{eq:countforB}):
\[\ell_1\leq 2t,\, \ell_1 + \ell_2\leq 3t,\,\dots\,, \ell_1+\ell_2 + \dots +\ell_j\leq (j+1)t.\]
To bound the RHS of (\ref{eq:boundhardexponential}), imagine we try to maximize 
\[\sum_{j=1}^{+\infty}\,\frac{B\,e^{-\frac{\sqrt{j}}{2}}}{j^{\frac{3}{4}}}\,\ell_j\]
as a function of the $\ell_j$'s, subject to the above constraints. Because the $\ell_j$'s are multiplied by quantities that decrease in $j$, the best strategy to maximize the sum is to put as much mass as possible on the $\ell_j$ with the smallest indices $j$. This corresponds to $\ell_1=2t$ and $\ell_j=t$ for all $j\neq 1$. We conclude:
\[\frac{1}{n}\sum_{j=1}^{+\infty}\,\frac{B\,e^{-\frac{\sqrt{j}}{2}}}{j^{\frac{3}{4}}}\,\ell_j\leq \frac{C}{n}\,\sum_{j=1}^{+\infty}\,\frac{B\,e^{-\frac{\sqrt{j}}{2}}}{j^{\frac{-1}{4}}}\,t\leq \frac{CBt}{n},\]
which we can upper bound via (\ref{eq:boundcommonterm}). This concludes the bounding of the term in (\ref{eq:hardexponential}).

Finally, the last line terms (\ref{eq:lastline}) can be controlled by plugging in the definition of $B_p(t)$ from Lemma \ref{lem:counting} and performing some simple calculations.\end{proof}

\section{Putting everything together} \label{sec:final_est}

This section concludes the proof of the main result. This final stage of the argument consists of two steps. The first one bounds the performance of the estimators $\Est_k$ defined \S \ref{sub:defestimator} work for a range of $k$. This is basically a combination of all results from \S \ref{sub:defestimator} onwards. 

\begin{lemma}[Proven in \S \ref{sub:proof:lem:manyktruncate}]\label{lem:manyktruncate} Consider a constant $c>0$ such that: 
\[k_0:=\lfloor \eta n \rfloor + \lceil c\eta n + \rs + \log(32/3\alpha)\rceil<n.\]
Also fix $p\geq 4$. Then the following holds with probability $\geq 1-\alpha/2$:  
\[\bigcap_{k=k_0}^{n-1}\left\{\|\Est_k - \Sigma\|\leq C\|\Sigma\|\kappa_4^2\sqrt{\frac{\rs + \log(2/\alpha) + (k-k_0)}{n}} + C_c\kappa_p^2\|\Sigma\|\left(\frac{k}{n}\right)^{1 - \frac{2}{p}}\right\},\]
where $C_c>0$ depending only on $c$; here, $\Est_k$ is the estimator obtained in Proposition \ref{prop:estimatorisdefined}.\end{lemma}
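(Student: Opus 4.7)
The plan is a single union bound over $k\in\{k_0,\ldots,n-1\}$ combining Propositions~\ref{prop:estimatorisdefined} and~\ref{prop:estimatortruncatedempirical} with the Counting Lemma~\ref{lem:counting} and the truncated-process bound of Lemma~\ref{lem:truncatedconcentrates}. For each such $k$, I set $t_k:=k-\lfloor\eta n\rfloor$, the largest admissible $t$ in Proposition~\ref{prop:estimatortruncatedempirical}; this satisfies $t_k\ge t_{k_0}=\lceil c\eta n+\rs+\log(32/(3\alpha))\rceil$, from which two properties used throughout follow: $k\le(1+1/c)\,t_k$ (since $\eta n\le t_k/c$) and $t_k\ge\rs$. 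Decaying confidences $\alpha_k:=(\alpha/8)(1-e^{-1})\,e^{-(k-k_0)}$ are assigned so that $\sum_k\alpha_k\le\alpha/8$. Lemma~\ref{lem:counting} applied at each $t_k$ then fails in total with probability $\sum_k e^{-t_k}\le e^{-t_{k_0}}/(1-e^{-1})\le 3\alpha/8$, and Lemma~\ref{lem:truncatedconcentrates} fails in total with probability at most $\alpha/8$.

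On the intersection $\mathcal{E}$ of all these events (probability $\ge 1-\alpha/2$), Propositions~\ref{prop:estimatorisdefined} and~\ref{prop:estimatortruncatedempirical} yield, for every $k$ in the range,
\[\|\Est_k-\Sigma\|\le 2\,\varepsilon(B_p(t_k))+\frac{4\,B_p(t_k)\,k}{n}+\frac{2\,\kappa_p^p\|\Sigma\|^{p/2}}{B_p(t_k)^{p-2}}.\]
Each of these three terms is then reduced to the target form. The three pieces of Lemma~\ref{lem:truncatedconcentrates} give: the sub-Gaussian piece $\kappa_4^2\|\Sigma\|\sqrt{(t_k+2\log(8/\alpha_k))/n}$, which fits the first summand after using $\log(1/\alpha_k)\le C+\log(2/\alpha)+(k-k_0)$ and $t_k=t_{k_0}+(k-k_0)$; the moment piece $\kappa_p^2\|\Sigma\|(t_k/n)^{1-2/p}\le\kappa_p^2\|\Sigma\|(k/n)^{1-2/p}$ (since $t_k\le k$); and the rank piece $\kappa_4^2\|\Sigma\|\,\rs/\sqrt{nt_k}\le\kappa_4^2\|\Sigma\|\sqrt{\rs/n}$ (since $t_k\ge\rs$). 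For $4B_p(t_k)k/n$, the two components of $B_p(t_k)$ combined with $k\le C_c\,t_k$ contribute $C_c\,\kappa_p^2\|\Sigma\|(k/n)^{1-2/p}$ and $C_c\,\kappa_4^2\|\Sigma\|\sqrt{\rs/n}$, respectively. The bias $2\kappa_p^p\|\Sigma\|^{p/2}/B_p(t_k)^{p-2}$ is controlled, using $B_p(t_k)\ge C\,\kappa_p^2\|\Sigma\|(n/t_k)^{2/p}$ together with the scale-normalisation $\|\Sigma\|=1$ and $\kappa_p\ge 1$, by $C\,\kappa_p^2\|\Sigma\|(k/n)^{1-2/p}$.

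The main obstacle is the constant bookkeeping rather than any single deep step. In particular, the sub-Gaussian piece of $\varepsilon(B_p(t_k))$ carries an implicit $\sqrt{c\eta}$ contribution through $t_{k_0}$, which must be absorbed into the $C_c\,\kappa_p^2\|\Sigma\|(k/n)^{1-2/p}$ summand rather than the $\kappa_4^2$ summand; this is permissible because $k\ge(1+c)\eta n$ gives $(k/n)^{1-2/p}\ge((1+c)\eta)^{1-2/p}$, and $C_c$ is allowed to depend on $c$. A similar care is needed when trading $\log(1/\alpha_k)$ for $\log(2/\alpha)+(k-k_0)$ and when applying $k\le C_c\,t_k$; none of these steps is subtle individually, but together they require attention to get the final expression exactly in the claimed form.
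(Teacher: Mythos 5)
Your proposal follows essentially the same route as the paper's own proof: a union bound over $k$ with geometrically decaying confidences, Proposition~\ref{prop:estimatorisdefined} plus Proposition~\ref{prop:estimatortruncatedempirical} on the intersection of the counting and truncated-process events, and then bookkeeping via the bound (\ref{eq:boundcommonterm}). The two structural deviations are harmless: you take the maximal admissible $t_k=k-\lfloor\eta n\rfloor$ (the paper takes $t=\lceil c\eta n+\rs+\log(8/3\alpha_k)\rceil$, letting $t$ grow only logarithmically in $k-k_0$), and you budget the counting events separately instead of reusing the fact that ${\rm Count}(B_p(t),t)$ is already contained in the event underlying Lemma~\ref{lem:truncatedconcentrates}; your probability accounting ($3\alpha/8+\alpha/8\le\alpha/2$) checks out, as do $k\le(1+1/c)t_k$ and $t_k\ge\rs$.

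The one point to flag is the step you yourself single out: the justification you give for absorbing the $\kappa_4^2\sqrt{c\eta}$ contribution into $C_c\kappa_p^2\|\Sigma\|(k/n)^{1-2/p}$ is backwards. From $k/n\gtrsim(1+c)\eta$ you would need $\eta^{1/2}\le C_c\,\eta^{1-2/p}$, but for $\eta<1$ and $p>4$ one has $\eta^{1/2}\ge\eta^{1-2/p}$, with ratio $\eta^{-(1/2-2/p)}$ unbounded as $\eta\to0$; so the inequality you invoke only holds for $p=4$, or when $\eta$ is bounded below by a constant depending on $c$, or when $c\eta n\lesssim\rs+\log(2/\alpha)+(k-k_0)$ (in which case the term hides in the $\kappa_4^2$ summand instead). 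In fairness, the paper's own proof performs exactly this absorption silently when it asserts that the bound of Lemma~\ref{lem:truncatedconcentrates} with $t\supseteq c\eta n$ is dominated by the claimed right-hand side, so this is a shared delicate point rather than a deviation from the paper; but as written your justification for it does not stand on its own, and you should either restrict the claim (e.g.\ handle $p=4$ and the regime $c\eta n\lesssim\rs+\log(2/\alpha)+(k-k_0)$ separately, as the paper effectively does at the level of Theorem~\ref{thm:main}) or give a different argument for that term. A minor additional remark: the exponent $B^{p-2}$ you copy from Proposition~\ref{prop:estimatortruncatedempirical} should be $B^{p/2-1}$ (as used in the paper's proofs); with the correct exponent the bias bound is homogeneous in $\Sigma$ and needs no ``normalisation $\|\Sigma\|=1$''.
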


The second step will be the final construction of the estimator, in \S \ref{sub:finalfinal}. 

\subsection{Proof of Lemma \ref{lem:manyktruncate}} \label{sub:proof:lem:manyktruncate}

\begin{proof}[Proof of Lemma \ref{lem:manyktruncate}] Recall that the error \[\|\Est_k-\Sigma\|\leq 2\sup_{v\in \Sd}|\est_k(v)-\inner{v}{\Sigma v}|\] (cf. Proposition \ref{prop:estimatorisdefined}). Thus the lemma follows from the following observation: for any $k\geq k_0$, the following holds with probability $\geq 1-2^{-k+k_0-2}\alpha$ 
\[\sup_{v\in \Sd}|\est_k(v) - \inner{v}{\Sigma v}|\leq C\|\Sigma\|\kappa_4^2\sqrt{\frac{\rs + \log(2/\alpha) + (k-k_0)}{n}} + C\kappa_p^2\|\Sigma\|\left(\frac{k}{n}\right)^{1 - \frac{2}{p}},\]
with $C>0$ universal. Indeed, the Lemma follows from the above from a union bound. 

To prove the above observation, apply Lemma \ref{lem:truncatedconcentrates} with 
\[\alpha\mbox{ replaced by }\alpha_k:=\frac{\alpha}{2^{k-k_0+2}}\mbox{ and }t:=\lceil c\eta n + \rs + \log(8/3\alpha_k)\rceil,\]
noting that $\lfloor \eta n\rfloor + t\leq k$. It follows that, with probability $\geq 1-\alpha_k$, 
\[\varepsilon(B_p(t))\leq C\|\Sigma\|\kappa_4^2\sqrt{\frac{\rs + \log(2/\alpha) + (k-k_0)}{n}} + C\kappa_p^2\|\Sigma\|\left(\frac{k}{n}\right)^{1 - \frac{2}{p}}.\]
The proof of Lemma \ref{lem:truncatedconcentrates} obtains the above inequality as a consequence of a certain event holding, which is contained in ${\rm Count}(B_p(t),t)$. Thus we can assume that the previous display {\em and} ${\rm Count}(B_p(t),t)$ {\em both} hold with probability $\geq 1-\alpha_k$. In that case Proposition \ref{prop:estimatortruncatedempirical} (with $B=B_p(t)$ and $s=t$) gives: 
\[\sup_{v\in \Sd}|\est_k(v) - \inner{v}{\Sigma v}|\leq \varepsilon(B_p(t)) + \frac{B_p(t)\,k}{n} + \frac{\kappa_p^p\tr{\Sigma}^{\frac{p}{2}}}{B^{\frac{p}{2}-1}}.\]
We finish the proof by using $CB_p(t)\geq\kappa_p^2 \|\Sigma\|(n/t)^{1/p}$; noting that $k\leq C_ct$ for some $C_c>0$ depending on $c$ only; and bounding $B_p(t)\,t/n$ as in (\ref{eq:boundcommonterm}).\end{proof}  

\subsection{Proof of the main result}\label{sub:finalfinal}

\begin{proof}[Proof of Theorem \ref{thm:main}] The basic idea of this final argument is that, once Lemma \ref{lem:manyktruncate} is in place, one ``wins the game'' by finding a $\widehat{k}$ similar to $k_0$, for which what we need is an estimator for $\rs$.

Let us give the details, allowing in this proof for absolute constants $C$, $D$ whole value may change at each appearance. Notice that the dependence on the parameter $c$ disappears: we simply take $c=1$ and require that $\eta\,\kappa_4^4$ is suitably small (which is part of our assumption).

Corollary \ref{cor:trace} presents a trimmed mean estimator for the trace. Specifically, that result implies the following: if $\alpha\in (0,1)$ satisfies:
\[k:=\lfloor \eta n\rfloor + \lceil \eta n + \log(4/\alpha)\rceil<n\]
and moreover $\eta\leq (1/4C\,\kappa_4^2)^{2}$ and $n\geq C^2\kappa_4^4\log(4/\alpha)$, then the estimator $\Trest$ from that corollary satisfies
\begin{equation}\label{eq:traceisgood}\Pr{\frac{\tr{\Sigma}}{2}\leq \Trest\leq \frac{3\tr{\Sigma}}{2}}\geq 1 - \frac{\alpha}{2}.\end{equation}
Furthermore, by Lemma \ref{lem:manyktruncate}, with probability $1-\alpha/2,$
\begin{equation}\label{eq:manyktrunc}
\bigcap_{k=k_0}^{n-1}\left\{\|\Est_k - \Sigma\|\leq C\|\Sigma\|\kappa_4^2\sqrt{\frac{\rs + \log(2/\alpha) + (k-k_0)}{n}} + C_c\kappa_p^2\|\Sigma\|\left(\frac{k}{n}\right)^{1 - \frac{2}{p}}\right\}.\end{equation}
{\em Assume temporarily the events in (\ref{eq:traceisgood}) and (\ref{eq:manyktrunc}) both hold.} 
In that case, notice that there exists $D>0$ depending only on $c>0$ such that, if $n\geq D\kappa_p^p(\rs + \log(2/\alpha))$, then $k^*:=\lfloor n/D\rfloor$ satisfies: 
\[\|\Est_{k^*}-\Sigma\|\leq \frac{\|\Sigma\|}{2}, \text{ \,\, so that\,\,  } \frac{\|\Sigma\|}{2}\leq \|\Est_{k^*}\|\leq \frac{3\|\Sigma\|}{2}.\]
Therefore, 
\begin{equation}\label{eq:rsestimate}\frac{\rs}{3}\leq \frac{\Trest}{\Est_{k^*}}\leq 3\rs.\end{equation}
Now set:
\[\widehat{k}:=\lfloor \eta n \rfloor + \left\lceil \eta n + \frac{3\Trest}{\|\Est_{k^*}\|} + \log(32/3\alpha)\right\rceil.\]
In our current event, $k_0\leq \widehat{k}<n$ by (\ref{eq:rsestimate}) and the assumption in our theorem. Moreover, $\widehat{k}\leq k_0 + D'\tr{\Sigma}$ for some absolute constant $D'>0$.

\noindent{\bf End of the argument:} We conclude from all of the above that if $D$ is suitably large, and $k_*$, $\widehat{k}$ are defined as above, then: 
\begin{equation*}\|\Est_{\widehat{k}} - \Sigma\|\leq C\|\Sigma\|\kappa_4^2\sqrt{\frac{\rs + \log(2/\alpha)}{n}} + C_c\kappa_p^2\|\Sigma\|\left(\frac{k}{n}\right)^{1 - \frac{2}{p}}\end{equation*}
holds whenever the events in (\ref{eq:traceisgood}) and (\ref{eq:manyktrunc}) both hold. In particular, the probability of the last display holding is at least $\geq 1-\alpha$. This is what we want for $\eta n \geq \log(2/\alpha)$ because in that case $k\leq C \eta n$, and
\[ C\kappa_p^2\|\Sigma\|\left(\frac{k}{n}\right)^{1 - \frac{2}{p}}\leq {C}\kappa_p^2\|\Sigma\|\eta^{1-\frac{2}{p}},\]
for another absolute constant $C>0$. Otherwise, when $\eta n \leq \log(2/\alpha)$, then $k\leq C\log(2/\alpha)$ and we may take $p=4$ above to obtain the desired bound.\end{proof}

\appendix
The Appendix is divided into four main parts. The first three parts prove Lemmas \ref{lem:comparisonsmoothed}, \ref{lemma:NormBounds} and  \ref{lemma:PrNorm} from the main text. In the fourth part, we introduce a variation of the main result, Theorem 1.3 which relaxes the moment conditions.

\section{Proof of Lemma \ref{lem:comparisonsmoothed}}\label{appendixA}

\begin{proof}[Proof of Lemma \ref{lem:comparisonsmoothed}] Introduce a function:
\[f(\theta,x):=\inner{x}{\theta}^2\wedge B + (\gamma\|x\|)^2\wedge B\,\,\,((\theta,x)\in \R^d\times \R^d),\]
and define an intermediate object between $\varepsilon(B)$ and $\varepsilon_\gamma(4B)$:
\[\varepsilon^*(B):=\sup_{v\in\Sd}\left|\frac{1}{n}\sum_{i=1}^n(f(v,X_i) - \Ex{f(v,X_i)})\right|.\]

Recalling the definition of $\varepsilon(B)$ from equation (\ref{eq:definitionvarepsilon}) of the main text, we obtain
\begin{equation}\label{eq:comparestep1}\varepsilon(B)\leq  \varepsilon^*(B) + \left|\frac{1}{n}\sum_{i=1}^n(\gamma\|X_i\|)^2\wedge B - (\gamma\|X_1\|)^2\wedge B \right|. \end{equation}

We now compare $\varepsilon^*(B)$ and $\widetilde{\varepsilon}_\gamma(4B)$, recalling the definition of the latter from equation (\ref{eq:defsmoothedtruncated}) of the main text. Each of these quantities is a supremum over $v\in \Sd$ of certain sums. A term-by-term comparison of the sums leads to the following bound:
\begin{eqnarray}\nonumber \varepsilon^*(B) - \widetilde{\varepsilon}_\gamma(4B) & \leq & \sup_{v\in\Sd}\frac{1}{n}\sum_{i=1}^n\left|f(v,X_i) - \Gamma_{0,\gamma}[\inner{X_i}{\theta}^2\wedge (4B)]\right| \\ \nonumber & & + \sup_{v\in\Sd}\left|\Ex{f(v,X_1)} - \Ex{\Gamma_{0,\gamma}\inner{X_1}{\theta}^2\wedge (4B)}\right| \\ \label{eq:oneplustocomparison} &=:& (I) + (II).\end{eqnarray}
We deal with $(I)$ via Lemma \ref{lemma:NormBounds}. Break the different indices $i\in[n]$ into two groups. Group (a) consists of the indices for which $\gamma\|X_i\|^2> B$ or $\inner{X_i}{v}^2>B$, we note that $0\leq f\leq 2B$ and use the simple bound:
\[\left|f(v,X_i) - \Gamma_{0,\gamma}[\inner{X_i}{\theta}^2\wedge (4B)]\right|\leq 4B.\]
Group (b) consists of the remaining indices. To these terms one can apply Lemma \ref{lemma:NormBounds} with $B'=4B$ and obtain:
\[\left|f(v,X_i) - \Gamma_{0,\gamma}[\inner{X_i}{\theta}^2\wedge (4B)]\right|\leq \exp\left(\frac{-B}{2\gamma^2\norm{X_i}^2}\right) \,\frac{6\gamma^3\norm{X_i}^3}{B^{\frac{1}{2}}}.\]
Adding up the contributions of these two groups, we arrive at:
\begin{eqnarray}\nonumber (I)&\leq & \frac{4B}{n}\sup_{v\in\Sd}\#\{i\in[n]\,:\,\inner{X_i}{v}^2>B\}+ \frac{4B}{n}\#\{i\in[n]\,:\,(\gamma\|X_i\|)^2>B\} \\ \label{eq:boundIv}& & + \frac{1}{n}\sum_{i\in[n]\,:\, (\gamma\|X_i\|)^2\leq B}\exp\left(\frac{-B}{2\gamma^2\norm{X_i}^2}\right) \,\frac{6\gamma^3\norm{X_i}^3}{B^{\frac{1}{2}}}.\end{eqnarray}

The control of $(II)$ is a bit messier. The reasoning used above may be used ``inside the expectation" to obtain that. For any unit vector $v\in\Sd$,

\begin{eqnarray*}\left|\Ex{f(v,X_1)} - \Ex{\Gamma_{0,\gamma}\inner{X_1}{\theta}^2\wedge (4B)}\right| &\leq &  4B\,\Pr{\inner{X_1}{v}^2\geq B\mbox{ or }(\gamma^2\|X_i\|)^2> B} \\ &  +& \Ex{\Ind{(\gamma\|X_1\|)^2\leq B}\,\exp\left(\frac{-B}{2\gamma^2\norm{X_1}^2}\right) \,\frac{6\gamma^3\norm{X_1}^3}{B^{\frac{1}{2}}}}.\end{eqnarray*}

The probability term can be bounded via the moment assumptions (including Proposition \ref{prop:Minkowski}):
\[ 4B\,\Pr{\inner{X_1}{v}^2> B\mbox{ or }(\gamma^2\|X_i\|)^2>B}\leq \frac{4\kappa_p^p\|\Sigma\|^{\frac{p}{2}}}{B^{\frac{p}{2}-1}} + \frac{4\gamma^4\kappa_4^4\tr{\Sigma}^2}{B}.\]

As for the expectation, we break the event appearing inside it into a disjoint union: \[\{(\gamma\|X_1\|)^2\leq B\} =\bigcup_{j=1}^{+\infty}E_j\mbox{ where }E_j:=\left\{\frac{B}{j+1}< (\gamma\|X_1\|)^2\leq \frac{B}{j}\right\}.\]
Therefore, 
\[\Ex{\Ind{(\gamma\|X_1\|)^2\leq B}\,\exp\left(\frac{-B}{2\gamma^2\norm{X_1}^2}\right) \,\frac{6\gamma^3\norm{X_1}^3}{B^{\frac{1}{2}}}}=\sum_{j=1}^{+\infty}\Ex{\Ind{E_j}\,\exp\left(\frac{-B}{2\gamma^2\norm{X_1}^2}\right) \,\frac{6\gamma^3\norm{X_1}^3}{B^{\frac{1}{2}}}}.\]

For each $E_j$, 
\[\Pr{E_j}\leq \Pr{(\gamma\|X_1\|)^2>\frac{B}{j+1}}\leq \frac{\gamma^4\,(j+1)^2\,\Ex{\|X_1\|^4}}{B^2}\leq \frac{\kappa_4^4\gamma^4\,(j+1)^2\tr{\Sigma}^2}{B^2}.\]
Moreover, inside each $E_j$ we have an upper bound on the integrand:  
\[\gamma^2\|X_1\|^2\leq \frac{B}{j}\Rightarrow  \exp\left(\frac{-B}{2\gamma^2\norm{X_1}^2}\right) \,\frac{6\gamma^3\norm{X_1}^3}{B^{\frac{1}{2}}}\leq \exp\left(-\frac{j}{2}\right)\frac{6B}{j^{\frac{3}{2}}}.\]
Putting these bounds together,
\begin{equation}\label{eq:boundIIcompare}(II) \leq  \frac{4\kappa_p^p\|\Sigma\|^{\frac{p}{2}}}{B^{\frac{p}{2}-1}} + \frac{4\gamma^4\kappa_4^4\tr{\Sigma}^2}{B} +  \frac{(\gamma\kappa_4)^4\,\tr{\Sigma}^2}{B}\sum_{j=1}^{+\infty}e^{-\frac{j}{2}}\frac{6\,(j+1)^2}{j^{\frac{3}{2}}}.\end{equation}
Looking back at (\ref{eq:comparestep1}), (\ref{eq:oneplustocomparison}) and the bounds obtained in (\ref{eq:boundIv}) and (\ref{eq:boundIIcompare}), we can see all the terms in the RHS of the bound claimed in the statement of the Lemma. This finishes the proof.\end{proof}

\section{Proof of Lemma \ref{lemma:NormBounds}}\label{appendixB}
\begin{proof}[Proof of Lemma \ref{lemma:NormBounds}]
For simplicity, let $m:=\inner{X_i}{v}$ and $s:=\gamma^2\norm{X_i}^2$. Let $N$ be a normal random variable with mean $m$ and variance $s$. Then:
\[\Gv(\inner{x}{\theta}^2 \land B') = \Ex{N^2 \land B'}
\]
whereas $\Ex{N^2}= m+s$. Therefore, our goal is to bound: 
\[|\Ex{N^2\wedge B'} - \Ex{N^2}| = \Ex{(N^2-B')_+}= \frac{\int_{\sqrt{B'}}^{+\infty}\left( t^2-B'\right) \left(e^{-\frac{(t-m)^2}{2s}}+e^{-\frac{(t+m)^2}{2s}} \right)dt}{\sqrt{2\,\pi\,s}}.\]
We will perform the change of variables $t\to u+\sqrt{B'}$. Since we only consider $t\geq \sqrt{B'}$ in the integral, the variable $u$ will be always nonnegative. Moreover, $\sqrt{B'}/2\geq |m|$ by assumption, and so: \[(t\pm m)^2 = u^2 + (\sqrt{B'}\pm m)^2 + 2u(\sqrt{B'}\pm m)\geq \frac{B'}{4} + u\sqrt{B'}.\]
Since we also have and $t^2-B'=2u\sqrt{B'}+u^2,$ we can go back to the integrals and obtain:
\[ 
\Ex{(N^2-B')_+} \leq \frac{2\exp\left(-\frac{B'}{8s}\right)}{\sqrt{2\pi s}}\int_{0}^{+\infty}\left(2\sqrt{B'}u+u^2 \right)\exp\left(-\frac{\sqrt{B'}u}{2s}\right)\,du.
\]
One may use the formula:
\[\forall\eta>0, \forall a\in\Na:\,\,\, \int_{0}^{+\infty}u^ae^{-\eta u} du =\frac{a!}{\eta^{a+1}}, 
\]
corresponding to the moments of the exponential distribution, and the choice $\eta=\sqrt{B'}/2s$ to obtain:
\[ 
\int_{0}^{+\infty}\left(2\sqrt{B'}u+u^2 \right)\exp\left(-\frac{\sqrt{B'}u}{2s}\right)\,du =  \frac{8s^2}{\sqrt{B'}} + \frac{16s^3}{(B')^\frac{3}{2}}.
\]
Bounding $\sqrt{2\pi}\geq 2$ in the denominator, we 
conclude:
\[\Ex{(N^2-B')_+} \leq \exp\left(-\frac{B'}{8s}\right)\,\left(\frac{8s^{\frac{3}{2}}}{\sqrt{B'}} + \frac{16s^{\frac{5}{2}}}{(B')^\frac{3}{2}}\right),\]
and our final bound is obtained when we note that:
\[s\leq \frac{B'}{4}\Rightarrow \frac{s^{\frac{5}{2}}}{(B')^{\frac{3}{2}}}\leq \frac{s^{\frac{3}{2}}}{4(B')^{\frac{1}{2}}}.\]
\end{proof}

\section{Proof of Lemma \ref{lemma:PrNorm}}\label{proof:lemma:PrNorm}\label{appendixC}
\begin{proof}The first step of the proof i
s to note that for any $r \in \Na \setminus \{ 1\},$
\begin{equation}\label{claim:Norm}
\Pr{\# \left\{i\in [n]\,:\, \|X_i\|\geq e^{\frac{1}{2}}\left(\frac{n}{t}\right)^{\frac{1}{2}}\kappa_4\sqrt{\tr{\Sigma}}\right\}> r} \leq e^{-r-2}.
\end{equation}
In fact, this is Claim \ref{claim:count} in the main text with $Z_i:=\|X_i\|^2$, $q=2$ and $t=r$ (we use Proposition \ref{prop:Minkowski} to bound $\Ex{Z_i^2}$). A union bound for $r=t,2t, 3t,\dots$ gives
\[\Pr{\bigcup_{j=1}^\infty \left\{\# \left\{i\in [n]\,:\, \|X_i\|\geq \sqrt{e}\,\left(\frac{n}{jt}\right)^{\frac{1}{2}}\kappa_4\sqrt{\tr{\Sigma}}\right\}\geq jt\right\}} \leq \sum_{j\geq 1} e^{-jt-2}=\frac{e^{-t}}{e^2-e^{2-t}}.
\]
The RHS is bounded by $e^{-t}/3$ for $t\geq 3$. Thus, the lemma follows. 
\end{proof}

\section{Weak moment bounds assumption} \label{appendixD}

In this section, we explain briefly how our proof can be adapted to deal with even weaker moment assumptions than our main result. More specifically, we require the following. 

\begin{assumption}\label{assum2}
$X_1,\dots,X_n$ are i.i.d. copies of a random element $X$ of $\R^d$ satisfying $\Ex{\|X\|^q}<+\infty$ for some $q\geq 2$. We assume $\Ex{X}=0$, that the covariance $\Sigma$ of $X$ is non-null, and set 
\[\kappa_q:=\sup\limits_{v\in\R^d,\,\inner{v}{\Sigma v}=1}\Ex{|\inner{X}{v}|^q}^{\frac{1}{q}}.\]
Let $Y_1,\dots,Y_n$ be random elements of $\R^d$ that satisfy:
\[\#\{i\in [n]\,:Y_i\neq X_i\}\leq \eta n\]
for some $\eta\in [0,1)$.
\end{assumption}

\begin{proposition}\label{th:Weak_main}
 Assume $\alpha\in (0,1)$, $n\in\Na$ and a contamination parameter $\eta\in [0,1/2)$. Then there is an estimator, depending on $\alpha$, $\eta$, and $n$, such that, whenever Assumption \ref{assum2} holds, $\eta\leq 1/C\kappa_4^4$, and $n\geq C\,(\rs + \log(2/\alpha)),$ then with probability $\geq 1-\alpha$:
\[\normop{\Est_{\star}(Y_1,\dots,Y_n) - \Sigma}\leq C\kappa_q^2\normop{\Sigma}\left(\frac{\rs + \log(2/\alpha)}{n}\right)^{1-\frac{2}{q}}+C\kappa_q^2\normop{\Sigma}\eta^{1-\frac{2}{q}},\] 
where $C>0$ is an absolute constant. 
\end{proposition}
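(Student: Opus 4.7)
The plan is to run the same six-step scheme from \S \ref{sub:approach}, using the same trimmed mean estimators $\est_k(v)$ from \eqref{eeq:defestk} and the selection-of-$\widehat{k}$ construction from \S \ref{sub:finalfinal}, but with every occurrence of $\kappa_4$ and $\kappa_p$ merged into a single $\kappa_q$ and every bound re-derived under the weaker $L^q$--$L^2$ assumption. The target rate $(t/n)^{1-2/q}$ (in place of the sub-Gaussian $\sqrt{t/n}$) must arise naturally from the new variance bounds, so the calculations change but the architecture does not.

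The first technical step is a weak-moment version of the Counting Lemma (Lemma \ref{lem:counting}). I would set
\[
B_q(t) := \|\Sigma\| \left[ \bigl(C\kappa_q^2 (n/t)^{2/q}\bigr) \vee \bigl(C\kappa_q^2 \rs\, n^{1/2}/t^{1+1/q}\bigr) \right],
\]
with $\gamma^{-2} = t$, and rerun the proof of Proposition \ref{prop:choiceofBcounting} using only $q$-th moments: the term $\Pr{|\inner{X_1}{v}| \geq \sqrt{B}/2}$ is bounded by Markov with $\kappa_q$, and the Gaussian tail $\Gv\Pr{|\inner{X_1}{\theta-v}|>\sqrt{B}/2}$ is bounded by $C_q\gamma^q \kappa_q^q \tr{\Sigma}^{q/2}/B^{q/2}$ via conditioning on $X_1$ and using Proposition \ref{prop:Minkowski} (which is valid for all $q\geq 2$). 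The first summand in $B_q(t)$ controls the first term, the second summand (involving $\rs$) controls the Gaussian smoothing term, and the rest of the Counting Lemma proof (the symmetrization step \eqref{rel:counting} and the PAC-Bayes Bernstein application) goes through unchanged.

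The second technical step is the analog of Lemma \ref{lem:truncatedconcentrates}. Here the key new ingredient is the variance estimate
\[
\Gv\Var{\inner{X_i}{\theta}^2 \wedge B'} \leq \Gv\Ex{(\inner{X_i}{\theta}^2 \wedge B')^2} \leq (B')^{2-q/2} \Gv\Ex{|\inner{X_i}{\theta}|^q},
\]
and a standard computation shows $\Gv\Ex{|\inner{X_i}{\theta}|^q} \leq C_q\kappa_q^q(\|\Sigma\|^{q/2} + \gamma^q \tr{\Sigma}^{q/2})$ by splitting $\theta = v + (\theta - v)$. Plugging $B' = 4B_q(t)$ into the PAC-Bayesian Bernstein bound (Proposition \ref{th:PacBernstein}) and tracking exponents gives both the Bernstein fluctuation and the linear ``$B't/n$'' term of order $\kappa_q^2\|\Sigma\|(t/n)^{1-2/q}$, which is exactly the advertised rate. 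The smoothing comparison Lemma \ref{lem:comparisonsmoothed} and the tail norm control Lemma \ref{lemma:PrNorm} must both be redone with $q$ in place of $4$: in the former, every occurrence of $\gamma^4\kappa_4^4\tr{\Sigma}^2/B$ becomes $\gamma^q\kappa_q^q \tr{\Sigma}^{q/2}/B^{q/2-1}$ (still absorbed by our choice of $B_q(t)$), while the latter gets a $q/2$ exponent in its Markov bound.

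The main obstacle I anticipate is the smoothing comparison: several of the Gaussian integrals in the proof of Lemma \ref{lem:comparisonsmoothed} in \S \ref{appendixA} rely on $\Ex{\|X_1\|^4}$ being finite, and for $q<4$ these must be replaced by $q$-th moments, which forces a more delicate dyadic decomposition of the event $\{(\gamma\|X_1\|)^2\leq B\}$ and careful exponent bookkeeping. With these adjustments in hand, the final assembly mirrors \S \ref{sub:proof:lem:manyktruncate} and \S \ref{sub:finalfinal} verbatim: Lemma \ref{lem:manyktruncate} becomes a uniform bound of the form $\kappa_q^2\|\Sigma\|((\rs + \log(2/\alpha) + (k-k_0))/n)^{1-2/q} + \kappa_q^2\|\Sigma\|(k/n)^{1-2/q}$ over $k\in[k_0,n-1]$, and $\widehat{k}$ is chosen exactly as in the proof of Theorem \ref{thm:main} using a trace estimator $\Trest$ (Corollary \ref{cor:trace} adapts straightforwardly since it only requires $(q/2)$-th moments of $\|X_i\|^2$, which follow from Proposition \ref{prop:Minkowski}). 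Splitting the final bound according to whether $\eta n \geq \log(2/\alpha)$ yields the two-term error $\kappa_q^2\|\Sigma\|((\rs + \log(2/\alpha))/n)^{1-2/q} + \kappa_q^2\|\Sigma\|\eta^{1-2/q}$ stated in the proposition.
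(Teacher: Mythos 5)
Your plan is correct and follows essentially the same route as the paper's (sketched) proof: the decisive new ingredient you identify, namely the variance bound $\Gv\Var{\inner{X_i}{\theta}^2\wedge B'}\leq (B')^{2-q/2}\,\Gv\Ex{|\inner{X_i}{\theta}|^q}$ fed into Proposition \ref{th:PacBernstein} with a truncation level $B_q(t)\asymp \kappa_q^2\normop{\Sigma}(n/t)^{2/q}(1+\rs/t)$, is exactly the content of the paper's Lemmas \ref{lemma:higermoments} and \ref{lemma:higermomentsBq}, and the rest (comparison Lemma \ref{lem:comparisonsmoothed} with $q$-th moments in place of fourth moments, the counting and trace adaptations, and the assembly of \S \ref{sub:proof:lem:manyktruncate}--\ref{sub:finalfinal}) matches the paper's "follows analogously to Theorem \ref{thm:main}" sketch, which you actually spell out in more detail than the paper does. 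The only slip is the second summand in your proposed $B_q(t)$, $\kappa_q^2\rs\, n^{1/2}/t^{1+1/q}$, which is not the correct interpolation of the Gaussian-smoothing requirement for $q<4$ (one needs roughly $\kappa_q^2\normop{\Sigma}\rs\,(n/t)^{2/q}/t$), but this is immaterial because in the final assembly $t\gtrsim \rs$, so your first summand already dominates that requirement, consistent with the paper's choice in (\ref{eq:Bqt}).
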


\begin{proof}[Proof sketch]
Once we establish control over the truncated empirical processes, as in Lemma \ref{lemma:higermomentsBq} below, the proof follows analogously to that of the main Theorem \ref{thm:main}.
\end{proof}

The following lemma establishes an upper bound for the smoothed truncated empirical process within the specified context. 

\begin{lemma}\label{lemma:higermoments} 
Consider $B'>0$, $t > 0$, $\gamma^{-2}:=t$, and $2\leq q \leq 4$. Then
\[
\widetilde{\varepsilon}_\gamma(B')
\leq C\frac{B'\,t}{n}+C \nu_{q}^{2}\left(\frac{t}{n}\right)^{1-\frac{2}{q}},
\]
 with probability at least $1-e^{-t}$, where
 \[\nu_{q}^{q}:=\sup_{||v||=1} \Ex{|\ip{X_i}{v}^{q}|}+C\gamma^q \kappa_q^q\,\Ex{||X_i||^{q}}.\] 
\end{lemma}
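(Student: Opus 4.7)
The plan is to reuse the structure of the proof of Lemma \ref{th:boundSmooth}, replacing the $\kappa_4^4$-based variance estimate by one that exploits the truncation level $B'$ to compensate for having only a $q$-th moment with $2\leq q\leq 4$.

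First, I apply Proposition \ref{th:PacBernstein} to the upper and lower tails of the centred truncated process, with $Z_i^{\pm}(\theta):=\pm\frac{1}{n}\bigl(\insq{X_i}{\theta}\wedge B'-\Ex{\insq{X_i}{\theta}\wedge B'}\bigr)$, $A=B'/n$, $\bar{\mu}_\gamma=0$, $\gamma^{-2}=t$, and confidence $e^{-t}/2$. A union bound yields, with probability at least $1-e^{-t}$,
\[\widetilde{\varepsilon}_\gamma(B')\leq C\bar{\sigma}_\gamma\sqrt{t/n}+C\,B'\,t/n,\qquad \bar{\sigma}_\gamma^2:=\sup_{v\in\Sd}\Gv\Var{\insq{X_1}{\theta}\wedge B'}.\]

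The key new ingredient is a truncation-aware bound on $\bar{\sigma}_\gamma^2$. The elementary inequality $(a\wedge B')^2\leq (B')^{2-q/2}a^{q/2}$ for $a\geq 0$ (check the two cases $a\leq B'$ and $a>B'$ separately) gives
\[\Var{\insq{X_1}{\theta}\wedge B'}\leq \Ex{(\insq{X_1}{\theta}\wedge B')^2}\leq (B')^{2-q/2}\,\Ex{|\inner{X_1}{\theta}|^q}.\]
Under $\Gv$ the vector $\theta-v$ is $\N(0,\gamma^2 I_d)$, so conditionally on $X_1$ one has $\inner{X_1}{\theta-v}\sim\N(0,\gamma^2\norm{X_1}^2)$. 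Using $|a+b|^q\leq 2^{q-1}(|a|^q+|b|^q)$ and denoting by $m_q$ the $q$-th absolute moment of a standard normal,
\[\Gv\Ex{|\inner{X_1}{\theta}|^q}\leq 2^{q-1}\bigl(\Ex{|\inner{X_1}{v}|^q}+m_q\,\gamma^q\,\Ex{\norm{X_1}^q}\bigr)\leq C\nu_q^{q}\]
uniformly in $v\in\Sd$. Hence $\bar{\sigma}_\gamma\leq C(B')^{1-q/4}\nu_q^{q/2}$.

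The last step is a weighted AM--GM with nonnegative weights $a:=1-q/4$ and $b:=q/4$ summing to $1$:
\[(B')^{1-q/4}\nu_q^{q/2}\sqrt{t/n}=\bigl(B't/n\bigr)^{a}\bigl(\nu_q^{2}(t/n)^{1-2/q}\bigr)^{b}\leq a\,\frac{B't}{n}+b\,\nu_q^{2}\!\left(\frac{t}{n}\right)^{1-2/q},\]
since a direct expansion of exponents gives $a+b(1-2/q)=1/2$ and $a+b=1$. Substituting this into the first display yields the claimed inequality. The only mildly non-obvious move is recognizing that Young's inequality with precisely these weights is what marries the truncation level to the moment hypothesis; once the cut-off inequality $(a\wedge B')^2\leq (B')^{2-q/2}a^{q/2}$ is deployed, the remainder is bookkeeping that parallels Lemma \ref{th:boundSmooth}.
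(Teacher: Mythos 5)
Your proposal is correct and follows essentially the same route as the paper's own proof: apply the PAC-Bayesian Bernstein inequality to $\pm\frac{1}{n}(\insq{X_i}{\theta}\wedge B'-\Ex{\insq{X_i}{\theta}\wedge B'})$, bound the smoothed variance via $(a\wedge B')^2\leq (B')^{2-q/2}a^{q/2}$ so that it is controlled by $(B')^{2-q/2}\nu_q^q$, and then split the resulting term with Young's inequality with weights $1-q/4$ and $q/4$. The only difference is cosmetic: you spell out the Gaussian-smoothing estimate $\Gv\Ex{|\inner{X_1}{\theta}|^q}\leq C\nu_q^q$ and the two-tail union bound, which the paper leaves implicit.
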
 

 \begin{proof}
 We apply Proposition \ref{th:PacBernstein} to \[Z_i(\theta)= \pm\frac{1}{n} (\insq{X_i}{\theta}\land B' - \Ex{\insq{X_i}{\theta}\land B'})_{i\in[n], \theta\in\R^d}\] as in the proof of Lemma \ref{th:boundSmooth}. Then, it holds for all $v\in\Sd$ with probability at least $1-e^{-t}$,
\begin{eqnarray*} 
\frac{1}{n}\sum_{i=1}^n\Gv \left( \insq{X_i}{\theta} \land B'-\Ex{ \insq{X_i}{\theta} \land B'}\right) && \leq \\ \sqrt{\sup_{v\in\Sd}\sum_{i=1}^n\Gv  \Var{\insq{X_i}{\theta}\land B'}\frac{(\gamma^{-2}+2t)}{n}}   + \frac{(\gamma^{-2}+2t)B'}{6\,n}.
\end{eqnarray*}
 Then, we need to upper bound the first term. For this, observe that 
 \begin{eqnarray*}
\Ex{\Gamma_{v,\gamma}\left(\ip{X_i}{\theta}^2\land B'\right)^2} &\leq& \Ex{\Gamma_{v,\gamma}\left(\ip{X_i}{\theta}^2\land B'\right)}^2\\ &\leq& 
B'^{2-\frac{q}{2}}\Ex{\Gamma_{v,\gamma}|\ip{X_i}{\theta}|^{q}}\\
&\leq& B'^{2-\frac{q}{2}}\nu_q^q.
 \end{eqnarray*}
The second inequality comes from our hypothesis over $q$. It follows from above that
 \begin{eqnarray*}
\sqrt{\frac{\gamma^{-2}+2t}{n}\sup_{||v||=1}\Ex{\Gamma_{v,\gamma}\left(\ip{X_i}{\theta}^2\land B'\right)^2}}&\leq& C\,B'^{1-\frac{q}{4}} \left(\frac{\gamma^{-2}+2t}{n}\right)^{\frac{1}{2}}\nu_{q}^\frac{q}{2}\\
&\leq& C\left(\frac{B'\,t}{n}\right)^{1-\frac{q}{4}}\left(\nu_{q}^{2}\left(\frac{t}{n}\right)^{1-\frac{2}{q}}\right)^\frac{q}{4}\\
&\leq& C \frac{B'\,t}{n}+\nu_{q}^2\left(\frac{t}{n}\right)^{1-\frac{2}{q}}.
 \end{eqnarray*} 
 \end{proof}

To conclude, we use Lemma \ref{lemma:higermoments} above to control the empirical truncated process.

 \begin{lemma}\label{lemma:higermomentsBq} 
Make Assumption \ref{assum2}. Take $\alpha\in (0,1)$ and $t\in\Na$ with $t\geq 2$. There exists a universal $C_0>0$ such that, if one chooses 
\begin{equation}
B_q(t)\leq C_0\kappa_{q}^2\normop{\Sigma}\left(\frac{n}{t}\right)^{\frac{2}{q}}\left(1+\frac{\rs}{t}\right), \label{eq:Bqt}  \end{equation} then the following holds with probability $\geq 1-\alpha$:
\begin{equation}
\varepsilon(B_q(t))\leq C\kappa_q^2\normop{\Sigma}\left(1+\frac{\rs}{t}\right)\left(\frac{t}{n}\right)^{1-\frac{2}{q}}+ C\normop{\Sigma}\left(\frac{t}{n}\right)^{1-\frac{2}{q}}
\end{equation}
where $C>0$ is a universal constant independent of all other problem parameters.
 \end{lemma}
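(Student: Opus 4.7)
The plan is to mirror the proof of Lemma \ref{lem:truncatedconcentrates}, replacing each $p$-dependent ingredient by a $q$-dependent analogue. Throughout I would set $\gamma^{-2}:=t$ and $B:=B_q(t)$ as in (\ref{eq:Bqt}), and control $\varepsilon(B)$ on the intersection of four high-probability events: a uniform counting event $\{\forall v\in\Sd,\,\#\{i:\inner{X_i}{v}^2>B\}\leq t\}$, a counting event on the norms $\|X_i\|$, a Bernstein-type event for the truncated sum of $\gamma^2\|X_i\|^2\wedge B$, and the event supplied by Lemma \ref{lemma:higermoments} controlling the smoothed truncated process $\widetilde{\varepsilon}_\gamma(4B)$. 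Each event will be arranged to fail with probability at most $\alpha/4$, so that a union bound yields the stated $1-\alpha$ confidence.

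The uniform counting event is obtained exactly as in Lemma \ref{lem:counting}, but with a $q$-adapted variant of Proposition \ref{prop:choiceofBcounting}: splitting $\inner{X_1}{\theta}=\inner{X_1}{v}+\inner{X_1}{\theta-v}$ and applying Markov at order $q$ gives
\[\Gamma_{v,\gamma}\Pr{\inner{X_1}{\theta}^2>B}\leq \frac{C\kappa_q^q\|\Sigma\|^{q/2}}{B^{q/2}}\left(1+(\rs/t)^{q/2}\right),\]
using $\Gamma_{0,1}\inner{\theta}{\Sigma\theta}^{q/2}\leq 3^{q/4}(\tr{\Sigma})^{q/2}$ (Jensen, valid because $q/2\leq 2$). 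Choosing $B=B_q(t)$ as in (\ref{eq:Bqt}) makes the right-hand side at most $t/(1000n)$, after which Theorem \ref{th:PacBernstein} applied to the indicators $\Ind{\inner{X_i}{\theta}^2>B}$ delivers the counting event with probability $\geq 1-e^{-t}$. The norm-counting event is a $q$-analogue of Lemma \ref{lemma:PrNorm}, obtained by running Claim \ref{claim:count} on $Z_i=\|X_i\|^2$ with parameter $q/2\geq 1$ and Proposition \ref{prop:Minkowski}; this produces thresholds of the form $e^{2/q}(n/jt)^{1/q}\kappa_q\sqrt{\tr{\Sigma}}$.

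For the smoothed process I would invoke Lemma \ref{lemma:higermoments} with $B':=4B$. Its parameter $\nu_q^q=\sup_{\|v\|=1}\Ex{|\inner{X_i}{v}|^q}+C\gamma^q\kappa_q^q\Ex{\|X_i\|^q}$ is bounded, via hypercontractivity and Proposition \ref{prop:Minkowski}, by $C\kappa_q^{2q}\|\Sigma\|^{q/2}(1+(\rs/t)^{q/2})$, so that $\nu_q^2\leq C\kappa_q^2\|\Sigma\|(1+\rs/t)$. Lemma \ref{lemma:higermoments} then gives $\widetilde{\varepsilon}_\gamma(4B)\leq CBt/n+C\kappa_q^2\|\Sigma\|(1+\rs/t)(t/n)^{1-2/q}$. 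To pass from the smoothed to the unsmoothed process, I would apply Lemma \ref{lem:comparisonsmoothed}: its six error terms get controlled, in order, by a $q$-moment version of Lemma \ref{lem:simplebernstein}, by the two counting events, by a dyadic-sum argument on the norm-counting event (identical to the one carried out in the proof of Lemma \ref{lem:truncatedconcentrates}), and by direct moment computations for residuals such as $\kappa_q^q\|\Sigma\|^{q/2}/B^{q/2-1}$ and $\gamma^q\kappa_q^{2q}(\tr{\Sigma})^{q/2}/B$. After substituting the expression (\ref{eq:Bqt}) for $B_q(t)$, each of these terms is at most $C\kappa_q^2\|\Sigma\|(1+\rs/t)(t/n)^{1-2/q}$, which matches the claimed bound.

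The main obstacle is exactly this final collapse of the six error terms into the target form. Several intermediate expressions look nominally larger than the target; they only shrink to the correct order after the specific choice of $B_q(t)$ is invoked and the key identity $B_q(t)\,t/n\asymp\kappa_q^2\|\Sigma\|(1+\rs/t)(t/n)^{1-2/q}$ is used repeatedly. A secondary technical point is the re-derivation of Lemma \ref{lemma:PrNorm} under the weaker assumption $\Ex{\|X\|^q}<\infty$: Claim \ref{claim:count} must be run with parameter $q/2\geq 1$ rather than $q=2$, which changes the exponent $1/4$ to $1/q$ in the threshold and $\kappa_4$ to $\kappa_q$; the dyadic-sum bookkeeping of Lemma \ref{lem:truncatedconcentrates} then goes through verbatim with these replacements.
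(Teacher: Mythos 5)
Your proposal is correct and follows essentially the same route as the paper's (rather sketchy) appendix proof: compare $\varepsilon(B_q(t))$ to the smoothed process via Lemma \ref{lem:comparisonsmoothed}, bound the smoothed process with Lemma \ref{lemma:higermoments}, and absorb the remaining terms using $q$-adapted counting and Bernstein events together with the identity $B_q(t)\,t/n\asymp\kappa_q^2\normop{\Sigma}\left(1+\rs/t\right)(t/n)^{1-\frac{2}{q}}$ — indeed you make explicit the uniform counting, norm-counting and dyadic-sum steps that the paper's sketch leaves implicit. The only caveat is bookkeeping: a careful evaluation of $\nu_q^2$ yields $C\kappa_q^2\normop{\Sigma}\left(1+\kappa_q^2\,\rs/t\right)$ rather than $C\kappa_q^2\normop{\Sigma}\left(1+\rs/t\right)$, a looseness in the constants/moment factors that the paper's own appendix shares.
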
 

\begin{proof}
Lemma \ref{lem:comparisonsmoothed} provides a general bound for the difference between the smoothed and unsmoothed truncated empirical processes, for all values of $B$ and $\gamma$ greater than zero. To address our moments assumption, we first establish an upper bound for the initial term on the RHS using the Bernstein inequality as follows.

\begin{eqnarray*}
\left|\frac{1}{n}\sum_{i=1}^n\left\{(\gamma\|X_i\|)^2\wedge B_q(t) - \Ex{(\gamma\|X_1\|)^2\wedge B_q(t) }\right\}\right|&\leq& C\frac{B_q(t)\,t}{n}+ C \frac{B_q(t)\log(1/\alpha)}{6n} \\ &+& C \kappa_q\left(\frac{n}{t}\right)^{2/q}\frac{\tr{\Sigma}}{n} \left(\frac{\log(1/\alpha)}{t}\right)^{2/q}, \end{eqnarray*}
with probability at least $1-\alpha$. 
It follows from the fact that:
\[
\Ex{(\gamma||X_1||)^2\wedge B_q(t)}^2\leq B_q(t)^{2-q/2}\gamma^q\Ex{X_1}^q\leq \kappa_q^{q/2}B_q(t)^{2-q/2}\gamma^q\tr{\Sigma}^{q/2}.
\]

Next, Lemma \ref{lemma:higermoments} bounds the smoothed truncated empirical processes. Utilizing our selected $B_q(t)$ hypothesis and recalling that
\[\nu_q^2\leq c\kappa_p^2\normop{\Sigma}\left(1+\kappa_q \frac{\rs}{t}\right),\] we can deduce that:
\[
\widetilde{\varepsilon}_\gamma(B_q(t))
\leq C \kappa_q^2\normop{\Sigma}\left(\frac{t}{n}\right)^{1-\frac{2}{q}} \left(1+\frac{\rs}{t}\right).
\]
Finally, the last term in Lemma \ref{lem:comparisonsmoothed} can be bounded as following. 
\[\frac{4\kappa_q^q\normop{\Sigma}^{q/2}}{B_q(t)^{q/2-1}}+\frac{4\kappa_q^q\tr{\Sigma}^{q/2}}{B_q(t)^{q/2-1}}.\]
By substituting the expression for $B_q(t)$ as given in (\ref{eq:Bqt}), we arrive at the desired result.
\end{proof}

\bibliography{references}
\bibliographystyle{apalike}
\end{document}